\begin{document}

\title[BvM for Schr\"odinger Equation]{Bernstein - von Mises theorems for statistical inverse problems I:  Schr\"odinger equation}

\author{Richard Nickl \\ \\  University of Cambridge \\ \\  \today}

\maketitle

\begin{abstract}
The inverse problem of determining the potential $f>0$ in the partial differential equation $$\frac{\Delta}{2} u - fu =0 \text{ on } \mathcal O ~~\text{s.t. } u = g \text { on } \partial \mathcal O,$$ where $\mathcal O$ is a bounded $C^\infty$-domain in $\mathbb R^d$ and $g>0$ is a given function prescribing boundary values, is considered. The data consist of the solution $u$ corrupted by additive Gaussian noise. A nonparametric Bayesian prior for the function $f$ is devised and a Bernstein - von Mises theorem is proved which entails that the posterior distribution given the observations is approximated in a suitable function space by an infinite-dimensional Gaussian measure that has a `minimal' covariance structure in an information-theoretic sense. As a consequence the posterior distribution performs valid and optimal frequentist statistical inference on various aspects of $f$ in the small noise limit.

\medskip

\noindent\textit{MSC 2000 subject classification}: 62G20, 65N21, 35J10

\smallskip

\noindent\textit{Key words: Bayesian nonlinear inverse problems, elliptic partial differential equations, inverse scattering problem, asymptotics of nonparametric Bayes procedures}
\end{abstract}

 \tableofcontents

\section{Introduction}

Inverse problems form a vast and well-studied area within applied mathematics. In the `information age' we live in, algorithms that successfully solve these problems must be robust to the presence of statistical noise and  measurement error. A principled approach to such \textit{statistical} inverse problems is the Bayesian one, and it has been shown in influential work in the last decade that modern MCMC methodology can be used to construct computationally efficient Bayesian algorithms for complicated non-linear inverse problems in infinite-dimensional settings. This methodology is attractive for scientists because the Bayesian posterior distribution automatically delivers an estimate of the statistical uncertainty in the reconstruction, and hence suggests `confidence' intervals that allow to reject or accept scientific hypotheses.  The literature on applications of Bayes procedures in inverse problems is growing rapidly and cannot be reviewed here, we only mention Andrew Stuart's survey papers  \cite{S10, DS16} and the contributions \cite{CDRS09, LSS09, P12, DHS12, SS12, CRSW13, DLSV13, HB15}, where many further references can be found.

Algorithms that solve ill-posed inverse problems typically involve a regularisation step, for instance via a penalised variational procedure or a spectral cut-off. In the Bayesian approach this step is provided by the prior distribution, which represents a regularisation tool rather than subjective prior beliefs about the state of nature.  A natural question therefore arises as to whether such Bayesian algorithms deliver adequate inferential conclusions that are \textit{independent} of the prior. If so, one may further ask whether Bayes solutions of inverse problems allow for recovery of an unknown parameter $f$ in a statistically \textit{optimal} way. Of particular importance in this context is to understand whether `credible regions' constructed from the posterior distribution are objectively valid, approximate `frequentist' confidence sets. A paradigm to answer these questions is provided by performing an analysis of the Bayesian algorithm in the `large sample' or `small noise' limit, and under the assumption that the data are generated from a fixed `true' function $f=f_0$ (instead of $f$ being drawn at random from the prior distribution). That Bayesian inference can be valid in this setting has been well studied in mathematical statistics since Laplace (Chapter VI in \cite{L1812}). We refer to the recent monograph \cite{GvdV17} and also to Chapter 7.3 in \cite{GN16} for an account of such `frequentist consistency results' for nonparametric Bayes procedures in standard statistical models, and to the recent paper \cite{SVV15} and its discussion for results concerning the particular question of nonparametric Bayesian credible regions and their frequentist properties.

For \textit{linear} inverse problems the theory from Bayesian non-parametric statistics often carries over to the inverse setting, using the singular value decomposition (SVD) of the forward operator and/or conjugacy of Gaussian priors. See the papers \cite{K11}, \cite{R13} and \cite{S13} and also the more recent references \cite{SVV15, KLS16, KS18, MNP17}. From these results one can deduce objective guarantees for the Bayesian approach, and in fact Bayes point estimates (such as MAP statistics) can be shown to be closely linked to commonly used Tikhonov regularisation or penalisation methods in these inverse problems (see \cite{DLSV13}, \cite{HB15}).

In the case of \textit{non-linear} statistical inverse problems, which include many important examples arising with partial differential equations (PDEs), little is known about the frequentist performance of Bayesian methods. For the problem of inferring the diffusion coefficient from noisy observations of the solution of an elliptic PDE, \cite{V13} provides some contraction rate results that entail that the posterior distribution arising from certain priors concentrates on a neighborhood of the true parameter $f_0$ (although the conditions and rates obtained there are far from optimal). Recently it was shown in \cite{NS17} in a related parabolic problem that Bayesian inference for the coefficients of a scalar elliptic differential operator based on discrete samples of the associated diffusion Markov process can result in optimal posterior contraction rates about the true parameter. A form of weak consistency in this model had earlier been proved by \cite{MvZ13}, and a related recent contribution is \cite{A18}. While consistency and contraction rates are relevant results, they do not per se justify Bayesian `credible sets' and related uncertainty quantification methodology. Such guarantees may be derived from  more precise \textit{Bernstein-von Mises theorems}, which establish that the posterior distribution is approximated by a canonical Gaussian distribution in the small noise or large sample limit. While well understood in the finite-dimensional case \cite{LC86, vdV98}, the Bernstein-von Mises phenomenon is more subtle in the setting of high- and infinite dimensional statistical models (see \cite{F99}), and for non-linear inverse problems such results are currently not available: exceptions being \cite{L17} where normal approximations of posteriors in Euclidean spaces of increasing dimension are obtained, but under conditions whose suitability for inverse problems is unclear; and also the sequel \cite{NS17a} to the present paper.

This article constitutes an attempt to advance our understanding of the statistical performance of Bayes algorithms for non-linear inverse problems. In view of the importance of inverse problems arising in PDE models (see the various examples in the survey papers \cite{S10, DS16}) we study here a basic situation where one wishes to recover a coefficient of an elliptic partial differential operator from an observation of a solution to the associated PDE under some boundary conditions, corrupted by additive Gaussian noise. In fact we shall lay out the theory in a simple `Laplacian plus potential' situation which comprises all the conceptual difficulties but permits a clean analytic exposition of the main ideas behind the results we obtain. We first derive minimax optimal (within $\log$-factors) rates of posterior contraction about the unknown potential term in $L^2$-distance (Theorems \ref{rates} and \ref{meta}). The main contribution of this article however is Theorem \ref{main} which  provides an infinite-dimensional Bernstein-von Mises theorem for the posterior distribution resulting from a carefully chosen series prior for the potential term. It is shown that the posterior measure is approximated in a sense to be made precise, by a certain `canonical' Gaussian measure whose covariance structure attains the statistical information lower bound for this inference problem. As a consequence the resulting posterior parameter inferences are statistically optimal from an objective, information theoretic and asymptotic minimax point of view. 

In proving our main result we follow the program put forward in the papers \cite{CN13, CN14} (see also Section 7.3.4 in \cite{GN16}) and understand the Bernstein-von Mises problem in infinite-dimensional statistical models as one of showing weak convergence of the (scaled and centred) posterior measure to a fixed Gaussian measure in a function space that is large enough to support the limit distribution as a tight probability measure. The topology of this function space turns out to be weaker than the standard $L^p$-norm that one might otherwise consider, but one can show, again following \cite{CN13}, that arguments from interpolation theory imply that Bayesian credible sets constructed for these weak norms are valid frequentist confidence sets whose diameter also converges to zero in the stronger $L^1$-distance. 

In summary, our results give some theoretical support for the assertion that the Bayes approach can \textit{in principle} be expected to provide efficient solutions of \textit{non-linear} statistical inverse problems, and further that the associated uncertainty quantification can be objectively valid in the large sample/small noise limit. By emphasising `in principle' we wish to point out though very clearly here that our results should \textit{not} be construed as giving general guarantees for Bayes solutions of \textit{arbitrary} inverse problems: Even though our proofs do give a template for obtaining similar theorems in other settings, the details depend strongly on the fact that the inverse problem associated with the Schr\"odinger equation is in a certain sense `globally stable' on the parameter space we consider, and that the prior is taken to be supported in that parameter space (see also Remark \ref{lesson}). Obtaining similar results for different inverse problems or priors requires a careful analysis of various properties of the forward operator and of its linearisation, and it is conceivable that in certain situations the `information geometry' induced by the forward map may be such that Bernstein-von Mises theorems do in fact \textit{not} hold true. A comprehensive understanding of these questions for general non-linear problems remains a formidable challenge for future research in this area.

\section{A statistical inverse problem for the Schr\"odinger equation}
Let $\mathcal O$ be bounded $C^\infty$-domain in $\mathbb R^d, d \ge 2,$ with boundary $\partial \mathcal O$. Let $\bar{\mathcal O}$ be the closure of $\mathcal O$ in $\mathbb R^d$ and let $C(\bar {\mathcal O})$ be the space of continuous functions on $\bar {\mathcal O}$. For $\Delta = \sum_{i=1}^d \partial^2/\partial x_i^2$ the standard Laplacian operator, we consider the (time-independent) Schr\"odinger equation
\begin{equation}\label{PDE}
\frac{\Delta}{2} u - f u =0 \text{ on } \mathcal O~s.t.~u=g~\text{on } \partial \mathcal O
\end{equation}
where $g: \partial \mathcal O \to \mathbb R$ is a given function prescribing boundary values, and $f \in C(\bar {\mathcal O})$ is a non-negative `potential'. For $f > 0$ and $g$ sufficiently regular, a unique solution $u_f   \in C(\bar {\mathcal O})$ to (\ref{PDE}) exists and has probabilistic representation in terms of the Feynman-Kac formula 
\begin{equation} \label{fkac}
u_f(x) =u_{f,g}(x)= E^x\left[g(X_{\tau_\mathcal O}) e^{-\int_0^{\tau_\mathcal O} f(X_s)ds} \right],~x \in \mathcal O,
\end{equation} 
where $(X_s: s \ge 0)$ is a $d$-dimensional Brownian motion started at $x \in \mathcal O$, with exit time $\tau_\mathcal O$ from $\mathcal O$ satisfying $$\sup_{x \in \mathcal O}E^x \tau_\mathcal O \le K(vol(\mathcal O), d)<\infty.$$ We refer to Section \ref{sop} below, particularly Proposition \ref{sbaest}, for details. 

\smallskip

In inverse problems terminology, $f \mapsto u_f$ is the \textit{forward map}, and the \textit{inverse problem} is to recover $f$ given $u_f$ (and $g$). If $\|f\|_\infty \equiv \sup_{x \in \mathcal O}|f(x)| \le D$ and $\inf_{x \in \partial \mathcal O}g(x) \ge g_{\min} >0$ then we can apply Jensen's inequality to (\ref{fkac}) to obtain
\begin{equation} \label{jensen}
u_f(x) \ge g_{\min} e^{-\|f\|_\infty E^x\tau_\mathcal O} \ge c>0,~~c=c(g_{\min}, D, vol(\mathcal O), d),
\end{equation}
so that \textit{given} $u=u_f$ we can solve for $f$ simply by taking 
\begin{equation} \label{inv}
f(x) = \frac{1}{2}\frac{\Delta u}{u}(x),~~x \in \mathcal O.
\end{equation}

\smallskip

The potential $f$ models an attenuation of the solution of the Dirichlet boundary value problem for the standard Laplace equation. In physical language, $f$ describes a local `cooling' of the equilibrium temperature distribution of the classical heat equation, when initial `boundary temperatures' are given by $g$. In the theory that follows  $\Delta$ could be replaced by a general, strongly elliptic second order partial differential operator with \textit{known} coefficients, at the expense of mostly notational changes. Recent applications of inverse problems of this kind can be found, e.g., in photo-acoustics \cite{BU10} or scattering problems \cite{BL05, HW15}.

\smallskip

The question we ask here is how to optimally solve this non-linear inverse problem when the observations are corrupted by statistical noise.  The measurement model we consider is
$$Y_i = u_f(x_i) + w_i, ~~i=1, \dots, n,~ n \in \mathbb N,$$ where the $w_i$ are independent standard normal $N(0,1)$ random variables, and the $x_i$ are `equally spaced' approximate lattice points in the domain $\mathcal O$. By standard arguments from asymptotic statistics (see \cite{BL96, R08} or also Section 1.2.3 in \cite{GN16}) this discrete measurement model is asymptotically (as $n \to \infty$) equivalent to observing the functional equation
\begin{equation}\label{model0}
Y = u_f + \varepsilon \mathbb W,~~\varepsilon =\frac{1}{\sqrt n},
\end{equation}
where $\mathbb W$ is a Gaussian white noise (see Section \ref{obs} for details) in the Hilbert space $$L^2(\mathcal O)=\left\{h:\mathcal O \to \mathbb R, ~ \|h\|^2_{L^2(\mathcal O)}\equiv \int_\mathcal O h^2(x)dx<\infty \right\},$$ and $\varepsilon>0$ is the noise level. We will develop the theory that follows in this equivalent `continuous' model as it allows for a clear exposition of the key ideas. 

The noise process $\mathbb W$ belongs to the Sobolev space $H^{\eta}(\mathcal O)$ only for $\eta<-d/2$ and is not point-wise defined -- direct recovery of $f$ from $Y$ by a simple equation such as (\ref{inv}) thus becomes impossible. So while the deterministic inverse problem has an essentially straightforward solution, the statistical one has not: Even though (\ref{model0}) is a standard non-linear Gaussian regression model, the parameter space of admissible regression functions $u_f$ carries non-trivial non-linear constraints (since the $u_f$ have to be solutions of the PDE (\ref{PDE})), and to deal with these constraints in a statistically efficient way is a non-obvious task.

\medskip

A principled approach to solve such problems is the Bayesian one, which devises a prior distribution $\Pi$ for the `unknown' function $f$. More precisely, for $\mathcal F \subset L^2(\mathcal O)$ some parameter space, we consider $f$ distributed according to $\Pi$ where $\Pi$ is some (`prior') probability measure on the trace Borel-$\sigma$-field $\mathcal B$ of $\mathcal F$. Then by standard results (see Section \ref{obs} below), for any $u_f \in L^2(\mathcal O)$ we can define the likelihood function $p_f(Y)$ describing the density of the law $P_f^Y\equiv P_{u_f}^Y$ of $Y|f$ in (\ref{model0}) for a suitable fixed dominating measure. The posterior distribution $\Pi(\cdot|Y)$ is the law of $f|Y$ obtained from Bayes' formula
\begin{equation}\label{posterior}
\Pi(B|Y) = \frac{\int_B p_f(Y) d\Pi(f)}{\int_\mathcal F p_f(Y) d\Pi(f)} \equiv \frac{\int_B e^{\ell(f)}  d\Pi(f)}{\int_\mathcal F e^{\ell(f)} d\Pi(f)} ,~~B \in \mathcal B.
\end{equation}
As is common in the inverse problems literature, we consider the situation where $f$ has some regularity, say a prescribed number of bounded continuous derivatives on $\mathcal O$. We will see how the regularity influences our ability to reconstruct $f$ from $Y,g$, and we will show that a suitable Bayesian algorithm recovers $f$ in an optimal way in various loss functions, such as $L^2(\mathcal O)$ or certain Sobolev norms. 

\section{A posterior consistency result}

\subsection{Basic notation}\label{basic} 

For $\mathcal O \subset \mathbb R^d$ an open set and a multi-index $i=(i_1,\dots, i_d)$, $i_j \in \mathbb N \cup \{0\},$ of length $|i|=\sum_j i_j$ (not to be confused with $|\cdot|$ otherwise denoting the standard Euclidean norm on $\mathbb R^d$), and $D^i$ the associated (weak) partial differential operator, the usual Sobolev spaces are defined as 
\begin{equation}\label{sobsp}
H^\alpha(\mathcal O) = \left\{f \in L^2(\mathcal O): D^i f \in L^2(\mathcal O) ~\forall |i| \le \alpha\right\},~~ \alpha \in \mathbb N,
\end{equation}
where $H^0(\mathcal O)=L^2(\mathcal O)$. When no confusion may arise we will sometimes omit $\mathcal O$ in the notation, and we take as norm on $H^{\alpha}$ the functional 
\begin{equation} \label{sobnorm}
\|f\|_{H^\alpha} = \sum_{|i| \le \alpha}\|D^if\|_{L^2}.
\end{equation}
For $\alpha \notin \mathbb N$ one defines  $H^\alpha(\mathcal O)$ by interpolation, see Chapter I.9 in \cite{LM72} or Section 7 in \cite{AF03}. 

We can similarly define the spaces $C^\alpha(\mathcal O), \alpha \in \mathbb N \cup \{0\},$ by replacing $(L^2, \|\cdot\|_{L^2})$ in (\ref{sobsp}), (\ref{sobnorm}) by the space $(C(\mathcal O), \|\cdot\|_\infty)$ of bounded uniformly continuous functions on $\mathcal O$ -- for elements $f \in C^\alpha(\mathcal O)$ the functions $D^if, 0 \le |i| \le \alpha,$ then all have unique continuous extensions to $\bar {\mathcal O}$, and we sometimes write $C^\alpha(\bar {\mathcal O})$ to make explicit that we view such functions as being defined on $\bar {\mathcal O}$. The symbol $C^\infty(\mathcal O)$ denotes the set of all infinitely-differentiable functions on $\mathcal O$, and $C_0(\mathcal O)$ denotes those elements in $C(\bar{\mathcal O
})$ whose boundary trace $h_{|\partial \mathcal O}=0$ on $\partial \mathcal O$. For $\alpha \notin \mathbb N$ we say $f \in C^\alpha(\mathcal O)$ if $D^{i}f, |i| = [\alpha]$, where $[\alpha]$ is the integer part of $\alpha$, exists and is $\alpha-[\alpha]$-H\"older continuous. The norm on the space $C^\alpha(\mathcal O)$ is then given by 
\begin{equation}\label{holdlognorm}
\|f\|_{C^{\alpha}(\mathcal O)} \equiv \|f\|_{C^{[\alpha]}(\mathcal O)} + \max_{|i|=[\alpha]}\sup_{x, y \in \mathcal O; x \neq y} \frac{|D^{i}f(x)-D^{i}f(y)|}{|x-y|^{\alpha-[\alpha]}}.
\end{equation}
We also need H\"older-Zygmund spaces $\mathcal C^\alpha(\mathcal O), \alpha \ge 0$, see Section 3.4.2 in \cite{T83} for definitions. One has $\mathcal C^\alpha(\mathcal O)=C^\alpha(\mathcal O), \alpha \notin \mathbb N,$ (equivalent norms) and the continuous imbeddings $\mathcal C^{\alpha'} \subsetneq C^\alpha \subsetneq \mathcal C^{\alpha}, \alpha \in \mathbb N \cup \{0\}, \alpha'>\alpha$. Attaching a subscript $c$ to any space $S(\mathcal O)$ will denote the subspace $(S_c(\mathcal O), \|\cdot\|_{S(\mathcal O)})$ consisting of functions of compact support in $\mathcal O$, and $(S_K(\mathcal O), \|\cdot\|_{S(\mathcal O)})$ will denote the subspace consisting of elements of $S(\mathcal O)$ supported in a subset $K \subset \mathcal O$. 
 
\smallskip

For $\alpha>d/2$ and $\mathcal O$ a bounded $C^\infty$-domain, the Sobolev imbedding implies that $H^\alpha(\mathcal O)$ embeds continuously into $C^\beta(\bar{\mathcal O})$ for any $0 <\beta < \alpha-d/2$  and we further have 
\begin{equation} \label{mult0}
\|fg\|_{H^\alpha} \le c \|f\|_{H^\alpha} \|g\|_{H^\alpha},~~\alpha>d/2,
\end{equation}
for some $c=c(\alpha, d, \mathcal O)$. The above facts are classical for $\alpha \in \mathbb N$ (see \cite{AF03}) and also hold for $\alpha \notin \mathbb N$ by the use of interpolation theory \cite{LM72, T83}. We also repeatedly use the inequalities
\begin{equation}\label{mult}
\|fg\|_{H^\alpha} \le c\|f\|_{\mathcal C^\alpha} \|g\|_{H^\alpha}, ~ \|fg\|_{\mathcal C^\alpha} \leq c\|f\|_{\mathcal C^\alpha} \|g\|_{\mathcal C^\alpha},~~\alpha \ge 0.
\end{equation}
which follow from Remark 1 on p.143 in \cite{T83}.

\smallskip

On a Hilbert space $H$ we will denote the inner product generating the norm $\|\cdot\|_H$ by $\langle \cdot, \cdot \rangle_H$. For an arbitrary normed linear space $(X, \|\cdot\|_X)$,  the topological dual space is $$X^* = (X,\|\cdot\|_X)^* := \{L: X \to \mathbb R \text{ linear s.t. } |L(x)| \le C\|x\|_X \text{ for all } x \in X \text{ and some } C>0\},$$ which is a Banach space for the norm $\|L\|_{X^*} \equiv \sup_{x \in X, \|x\|_X \le 1} |L(x)|.$

\smallskip

If $\mu$ is a probability measure on some measurable space, then $X \sim \mu$ means that $X$ is a random variable in that space drawn from distribution $\mu$, also called the law $\mathcal L(X)=\mu$ of $X$. We write $X =^\mathcal L Y$ if two random variables $X,Y$ have the same law $\mathcal L(X)= \mathcal L(Y)$.
\smallskip

We will sometimes use the symbols $\lesssim, \gtrsim, \simeq$ to denote one- or two-sided inequalities up to multiplicative constants that may either be universal or `fixed' in the context where the symbols appear. We also use the standard $O_P,o_P, O, o$ notation to estimate the order of magnitude of sequences of random variables and real numbers, respectively.

\subsection{Wavelet bases for $L^2(\mathcal O)$}\label{fsp0}

Consider an orthonormal system of sufficiently smooth (`$S$-regular'), compactly supported Daubechies tensor wavelet basis functions $$\{\Phi_{l,r}: r \in \mathbb Z^d, l \in \mathbb N \cup \{-1,0\}\},~\Phi_{lr} = 2^{ld/2}\Phi_{0,r}(2^l\cdot)~\text{for } l \ge 0,$$ of the Hilbert space $L^2(\mathbb R^d),$ see \cite{M92, D92} and also Chapter 4 in \cite{GN16}. [We shall in proofs sometimes use the last dilation identity also when $l=-1$, in slight abuse of notation.] We will use the fact that such a basis characterises elements of classical function spaces on $\mathbb R^d$ by the decay of the sequence norms of wavelet coefficients; for instance
\begin{equation} \label{sobwav}
\|f\|_{H^\alpha (\mathbb R^d)}^2 \simeq \sum_{l,r} 2^{2l\alpha} \langle f, \Phi_{l,r} \rangle_{L^2(\mathbb R^d)}^2
\end{equation}
and for some constant $C>0$ and all $\alpha \ge 0$,
\begin{equation} \label{holdwav}
f \in C^\alpha (\mathbb R^d) \Rightarrow \sup_{l,r} 2^{l(\alpha+d/2)} |\langle f, \Phi_{l,r} \rangle_{L^2(\mathbb R^d)}| \le C \|f\|_{C^\alpha (\mathbb R^d)},
\end{equation}
with a converse of the last inequality holding as well when $\alpha \notin \mathbb N$. To be precise, the previous inequalities hold for all $\alpha \le S$, where $S \in \mathbb N$ measures the `regularity' of the wavelet basis used, in particular the $\Phi_{0,r} \in C^S_c(\mathbb R^d)$. Note that $S$ can be chosen as large as desired.

\smallskip

For a bounded $C^\infty$-domain $\mathcal O$ in $\mathbb R^d$, one can then also construct an orthonormal wavelet basis of the Hilbert space $L^2(\mathcal O)$ given by
\begin{equation} \label{basi}
\Big\{\Phi^{\mathcal O}_{l,r}: r \le N_l, l \in \mathbb N \cup \{-1,0\}\Big\}, N_l \in \mathbb N,
\end{equation}
consisting of all those $\Phi^\mathcal O_{l,r}=\Phi_{l,r}$ that are compactly supported within $\mathcal O$, and of some boundary corrected wavelets $\Phi_{l,r}^\mathcal O=\Phi_{l,r}^{bc}$ which are an orthonormalised linear combination $$\Phi^{bc}_{l,r}(x) = \sum_{|m-m'| \le K} d^l_{m,m'} \Phi_{l,m'}(x),~~ m=m(l,r), K \in \mathbb N,~x \in \mathcal O,$$ of those basic Daubechies wavelets $\Phi_{l,m'}$ that have support both in and outside $\mathcal O$. We refer to Theorem 2.33 (and Definition 2.4) in \cite{T08} for details, but record the key properties that for all $l$ and some fixed positive constants $c_0,c_1,D$,
\begin{equation}\label{wavprop}
N_l \le c_0 2^{ld}; \sum_{|m-m'|\le K} |d^l_{m,m'}| \le D;~~\supp(\Phi^{bc}_{l,r}) \subset \big\{x \in \mathcal O: |x-\partial \mathcal O| < \frac{c_1}{2^{l}}\big\}; \sum_r |\Phi_{0,r}| \in C(\mathbb R^d),
\end{equation}
the last property holding as well with $\Phi_{0,r}$ replaced by any derivative $D^i\Phi_{0,r}, |i| \le S,$ of $\Phi_{0,r}$.

\smallskip

For the above basis any function $f \in L^2(\mathcal O)$ has orthogonal wavelet series expansion $$f = \sum_l \sum_{r =1}^{N_l} \langle f, \Phi^\mathcal O_{l,r} \rangle_{L^2(\mathcal O)} \Phi^\mathcal O_{l,r}~~\text{ in } L^2(\mathcal O),$$ and we denote by $\Pi_{V_J}(f)$ the $J$-th partial sum of this series, equal to the $L^2(\mathcal O)$-projection onto the linear span $V_J$ of $\{\Phi^\mathcal O_{l,r}: r \le N_l, l \le J\}$.

\smallskip

We next define H\"older-Zygmund type spaces for this wavelet basis as 
\begin{equation} \label{hzbc}
f \in \mathcal C^{\alpha,W}(\mathcal O) \iff \sup_{l,r}2^{l(\alpha+d/2)}  |\langle f, \Phi^\mathcal O_{l,r}\rangle_{L^2(\mathcal O)}| \equiv \|f\|_{\mathcal C^{\alpha, W}(\mathcal O)} <\infty,
\end{equation}
a definition that makes sense for all values $\alpha \in \mathbb R$ if $f$ is a linear functional whose action $f(\Pi_{l,r}^\mathcal O)$ on the $\{\Phi_{l,r}^\mathcal O\}$ is well-defined. 

The above boundary corrected wavelet basis conveniently retains the multi-scale and orthonormal basis properties in $L^2(\mathcal O)$, but may not model the regularity of a function $f$ correctly near $\partial \mathcal O$. Describing smoothness of functions near the boundary by decay of wavelet coefficients is a delicate problem that we avoid here. For our purposes it will be sufficient that the spaces $\mathcal C^{\alpha}_c(\mathcal O)$ and thus also $C^\alpha_c(\mathcal O)$ are continuously embedded into $\mathcal C^{\alpha, W}(\mathcal O)$ if $\alpha<S$, see Proposition \ref{hzimb}.

\subsection{Construction of the prior distribution}

Since $f > 0$ is assumed, we seek a prior $\Pi$ for $f$ that is the law of a `generic' non-negative random function that possesses enough regularity so that the solution $u_f$ of (\ref{PDE}) exists for every $f \sim \Pi$. We work with the $S$-regular wavelet basis (\ref{basi}) of $L^2(\mathcal O)$ from the previous subsection, where $0<s<S$ is a fixed integer. We then take as prior distribution $\Pi =\Pi_J$ the law $\mathcal L(f)$ of the random function
\begin{equation}\label{prior}
f =e^{\varphi(x)} \equiv \exp\left\{\sum_{l\le J} \sum_{r=1}^{N_l} b_{l,r} \Phi^\mathcal O_{l,r} (x)\right\},~x \in \mathcal O,~J \in \mathbb N,
\end{equation}
where, for every $l$, the $(b_{l,r}: r =1, \dots, N_l)$ are drawn independently and identically from the uniform distribution $$U(-B 2^{-l(s+d/2)}\bar l^{-2}, B 2^{-l(s+d/2)}\bar l^{-2}),~~\bar l = \max (l,1).$$  For the prior to be fully adaptive one would have to further model $B, s, J$ by hierarchical priors, but for the results that follow we will restrict to the case where $s$ is given and $B$ an arbitrary but fixed positive constant. The truncation point $J \in \mathbb N$ will be chosen to increase as the noise level $\varepsilon$ decreases -- thus $\Pi$ is a `high-dimensional' prior.

\smallskip

The factors $2^{-l(s+d/2)}\bar l^{-2}$ appearing in the weights $b_{l,r}$ imply that  a function drawn from the prior lies in a ball of $C^s(\mathcal O)$ almost surely:  From (\ref{wavprop}) and if $|i| \le s<S$ we deduce that $$|D^i \phi| = \left|\sum_{l \le J,r} b_{l,r} D^i \Phi^\mathcal O_{l,r}\right| \le B \sum_{l\le J,r} \bar l^{-2}2^{l(|i|-s)} |D^i \Phi_{0,r}| \le cB  \sum_{l\le J} \bar l^{-2} \le C(B)$$ for some finite constant $C(B)$, hence $\varphi$ is contained in $C^\alpha(\mathcal O)$ for any $|\alpha| \le s, s \in \mathbb N$. Since the exponential map is smooth on bounded sets this further implies that 
\begin{equation} \label{b01}
\|\phi\|_{C^s(\mathcal O)} \le C, ~\|f\|_{C^s(\mathcal O)} \le C,
\end{equation}
for some finite constant $C$ that depend only on $s,B$ and the wavelet basis used. 

\smallskip

`Besov priors' as in (\ref{prior}) have been proposed in inverse problems settings before, see \cite{LSS09, SS12, DHS12, R13}, particularly \cite{SS12} studies such series priors with uniformly distributed coefficients in related PDE-type inverse problems. Other priors than (\ref{prior}) may be of interest, for instance those where $b_{l,r}$ are drawn from a Gaussian or Laplace distribution. The mathematical techniques we develop in the present paper apply in principle to such priors too, however the assumption that $\Pi$ is supported in a fixed ball of $C^s(\mathcal O)$ is used crucially in many places in our proofs, and cannot easily be relaxed. Generalising our results to priors with unbounded coefficients $b_{l,r}$ remains a challenging open problem for future research.

\subsection{A contraction theorem in $L^2$ and in Sobolev norms}

Our first result states that the posterior distribution is consistent in that it concentrates around any `true' function $f_0$ that generates equation (\ref{model0}), and we quantify the `contraction rate' in $L^2$ in terms of the smoothness $s$ of $f_0$. 

To reduce technicalities related to boundary issues we restrict to functions $\varphi_0 = \log f_0 \in C^{s}(\mathcal O)$ that have compact support in $\mathcal O$. Any such $\varphi_0$ is contained in $C^{s}_c(\mathcal O) \subset \mathcal C^{s,W}(\mathcal O)$ by Proposition \ref{hzimb}, which implies by (\ref{hzbc}) that the wavelet coefficients of $\varphi_0$ decay like $2^{-l(s+d/2)}$ as $l \to \infty$. We will strengthen this decay assumption slightly to 
\begin{equation}\label{decay}
\sup_{l,r}2^{l(s+d/2)}\bar l^2 |\langle \varphi_0, \Phi_{l,r}\rangle_{L^2(\mathcal O)}| \le B
\end{equation}
to exactly match the decay of the coefficients $b_{l,r}$ in the wavelet prior (\ref{prior}) -- this is equivalent to assuming some additional H\"older-regularity of $\varphi_0$ on the logarithmic scale. We will also assume $s \in \mathbb N$ for simplicity. Finally, and without loss of generality, we realise $g=g_{| \partial \mathcal O}$ as the boundary trace of an element of $C^{s+2}(\bar{\mathcal O})$. We then have the following contraction result.
 
\begin{theorem}\label{rates}
Let $f_0>0$ be such that $\varphi_0 = \log f_0 \in C^{s}_c(\mathcal O)$ satisfies (\ref{decay}) for some $s>2+d/2$, $s \in \mathbb N$. Let $\Pi=\Pi_J$ be as in (\ref{prior}) with $J \in \mathbb N$ such that $2^{J} \simeq \varepsilon^{-2/(2s+4+d)}$, and let $\Pi(\cdot|Y)$ be the resulting posterior distribution (\ref{posterior}) arising from observing (\ref{model0}) with $g \in C^{s+2}(\bar{\mathcal O}), g \ge g_{\min}>0$. 

If $P_{f_0}^Y$ is the law of $Y=u_{f_0} + \varepsilon \mathbb W$, then for all $M$ large enough and  $\gamma=s/(2s+4)$ we have as $\varepsilon \to 0$ that $$\Pi\left(f:\|f-f_0\|_{L^2(\mathcal O)} > M \varepsilon^{2s/(2s+4+d)} \log^\gamma (1/\varepsilon) |Y\right) \to^{P_{f_0}^Y} 0.$$
\end{theorem}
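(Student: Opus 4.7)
The plan is the standard two-step strategy for non-linear Bayesian inverse problems. First, I would establish posterior contraction in the \emph{prediction} metric $\|u_f - u_{f_0}\|_{L^2(\mathcal O)}$ by applying a Ghosal--Ghosh--van der Vaart type contraction theorem (presumably Theorem \ref{meta}) to the forward Gaussian regression model (\ref{model0}), obtaining the rate $\delta_\varepsilon = \varepsilon^{(2s+4)/(2s+4+d)}\log^{1/2}(1/\varepsilon)$ which matches the minimax rate for the smoothness $s+2$ of $u_f$. Second, I would transfer this to the asserted rate on $\|f - f_0\|_{L^2(\mathcal O)}$ via the explicit inversion formula (\ref{inv}) combined with an interpolation inequality.

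For step one I verify the three standard hypotheses. \textbf{(a) Prior mass}: since the log-likelihood in (\ref{model0}) is quadratic, the Kullback--Leibler and variance functionals reduce to $\|u_f - u_{f_0}\|_{L^2}^2/\varepsilon^2$; using Lipschitz-continuity of $f\mapsto u_f$ (Proposition \ref{sbaest}) it suffices to lower-bound $\Pi(\|f-f_0\|_{L^2}\le c\delta_\varepsilon)$, which via smoothness of the exponential map on bounded sets reduces to $\Pi(\|\varphi-\varphi_0\|_{L^2}\le c\delta_\varepsilon)$; the decay assumption (\ref{decay}) places the wavelet coefficients of $\varphi_0$ exactly in the rescaled support of the prior coordinates $b_{l,r}$, and combined with a standard truncation-bias estimate at resolution $J$ yields a bound of the required form $\exp(-C\delta_\varepsilon^2/\varepsilon^2)$. \textbf{(b) Sieve}: by (\ref{b01}) the prior is a.s.\ supported in a fixed $C^s$-ball of effective dimension $\simeq 2^{Jd}$, so I take $\mathcal F_n$ to be the full support, making the complement trivial. \textbf{(c) Entropy}: a volume covering of the $2^{Jd}$-dimensional coefficient cube combined with Lipschitz-continuity of $f\mapsto u_f$ gives $\log N(\eta,\{u_f: f\in \supp\Pi\},\|\cdot\|_{L^2}) \lesssim 2^{Jd}\log(1/\eta)$; the calibration $2^J\simeq\varepsilon^{-2/(2s+4+d)}$ balances this against $\delta_\varepsilon^2/\varepsilon^2$. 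Standard Hellinger-type tests for Gaussian regression separate $f_0$ from alternatives at prediction distance $\gtrsim\delta_\varepsilon$ with exponentially small errors, and the meta-theorem gives $\Pi(\|u_f-u_{f_0}\|_{L^2}>M\delta_\varepsilon\mid Y)\to^{P_{f_0}^Y} 0$.

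For the transfer step, identity (\ref{inv}) yields
$$f - f_0 = \frac{1}{2}\cdot\frac{u_{f_0}(\Delta u_f-\Delta u_{f_0}) - (\Delta u_{f_0})(u_f-u_{f_0})}{u_f\, u_{f_0}}.$$
On the prior's support both $u_f$ and $u_{f_0}$ are bounded below uniformly by (\ref{jensen}) and bounded in $H^{s+2}(\mathcal O)$-norm uniformly by elliptic regularity applied to (\ref{PDE}) with the $C^s$-bound (\ref{b01}) and the multiplier inequality (\ref{mult0}). Using (\ref{mult0}) again gives $\|f-f_0\|_{L^2}\lesssim \|u_f-u_{f_0}\|_{H^2}$, and the interpolation inequality
$$\|v\|_{H^2}\lesssim \|v\|_{L^2}^{s/(s+2)}\|v\|_{H^{s+2}}^{2/(s+2)}$$
applied to $v=u_f-u_{f_0}$, together with the uniform $H^{s+2}$-bound, converts the prediction rate into
$$\|f-f_0\|_{L^2}\lesssim \delta_\varepsilon^{s/(s+2)} = \varepsilon^{2s/(2s+4+d)}\log^{s/(2s+4)}(1/\varepsilon),$$
as claimed. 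Since $\mathcal F_n$ carries full prior (hence posterior) mass, the conclusion holds in posterior probability.

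The main obstacle is the prior-mass condition (a): it requires carefully matching the decreasing scale $2^{-l(s+d/2)}\bar l^{-2}$ of the uniform prior weights to the hypothesis (\ref{decay}), handling the truncation bias at frequency $J$, and passing between $\varphi$ and $f=e^\varphi$ through the local Lipschitz property of the exponential map on $C^s$. A secondary technical ingredient threading through both steps is the uniform elliptic estimate $\|u_f\|_{H^{s+2}}\lesssim 1$ over the entire $C^s$-support of $\Pi$ (and the analogous uniform lower bound $\inf u_f\ge c>0$), which I would expect to be packaged in Proposition \ref{sbaest}.
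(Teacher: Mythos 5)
Your overall architecture (posterior contraction in the prediction metric via a Ghosal--Ghosh--van der Vaart argument, then a pathwise transfer to $\|f-f_0\|_{L^2}$ through the stability identity (\ref{inv}) and interpolation between $L^2$ and $H^{s+2}$) matches the paper's strategy, and your step two is essentially correct. However, there is a genuine gap in your prior-mass verification (a). You reduce the small-ball condition to lower-bounding $\Pi(\|f-f_0\|_{L^2}\le c\,\delta_\varepsilon)$ with $\delta_\varepsilon=\varepsilon^{(2s+4)/(2s+4+d)}\log^{1/2}(1/\varepsilon)$, but this set has (essentially) zero prior mass: the prior is truncated at resolution $J$ with $2^J\simeq\varepsilon^{-2/(2s+4+d)}$, so every $\varphi$ in its support satisfies $\|\varphi-\varphi_0\|_{L^2}\ge\|(\mathrm{Id}-\Pi_{V_J})\varphi_0\|_{L^2}$, and for $\varphi_0$ whose coefficients saturate (\ref{decay}) this truncation bias is of order $2^{-Js}J^{-2}\simeq\varepsilon^{2s/(2s+4+d)}\log^{-2}(1/\varepsilon)$, which exceeds $\delta_\varepsilon$ by the polynomial factor $\varepsilon^{-4/(2s+4+d)}$. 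So the "standard truncation-bias estimate" you invoke does not yield a bound of the required form; it shows the $L^2$-ball of radius $c\delta_\varepsilon$ is empty of prior mass.

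The fix --- and this is the crux of the paper's argument --- is to exploit that the forward map gains two derivatives: by Proposition \ref{sbaest}B (via Lemma \ref{ape}), $\|u_f-u_{f_0}\|_{L^2}\lesssim\|f-f_0\|_{(H^2_0)^*}$, where $\|\cdot\|_{(H^2_0)^*}$ is the dual norm (\ref{h0norm}). The prior-mass condition therefore only needs to be checked for an $(H^2_0)^*$-ball of radius $\eta_\varepsilon\simeq\varepsilon^{(2s+4)/(2s+4+d)}\sqrt{\log(1/\varepsilon)}$, and in this weaker norm the truncation bias is $\lesssim 2^{-J(s+2)}J^{-2}=o(\eta_\varepsilon)$ (the extra $2^{-2l}$ per frequency coming from testing against $H^2$ functions), while the finite-dimensional part can still be controlled in $L^2$ and yields mass $\ge e^{-C(\eta_\varepsilon/\varepsilon)^2}$ exactly as you envisage. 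This is the content of the paper's condition (\ref{sba}) and Proposition \ref{rateslate}. Note this does not change the final answer: your claimed rate $\delta_\varepsilon^{s/(s+2)}$ coincides with $\eta_\varepsilon^{s/(s+2)}$, and the transfer step goes through unchanged once the prediction-metric contraction is secured. A secondary difference (not a gap) is that the paper builds its tests from a nonparametric least-squares estimator $\hat u$ with a peeling/concentration argument and tests directly in $\|f-f_0\|_{L^2}$, rather than invoking generic Hellinger tests in the prediction metric and transferring afterwards; either route works given the entropy bound.
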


\smallskip

Since (\ref{b01}) implies that $f-f_0$ is bounded in $C^s(\mathcal O) \subset H^s(\mathcal O)$, the standard interpolation inequality for Sobolev norms (see (\ref{lionsm}) below) further implies the contraction rates
\begin{equation}
\Pi\left(f:\|f-f_0\|_{H^\alpha(\mathcal O)} > M \varepsilon^{(2s-2\alpha)/(2s+4+d)} \log^{\gamma'} (1/\varepsilon) |Y\right) \to^{P_{f_0}^Y} 0,~~0 \le \alpha < s,
\end{equation}
with $\gamma'=(s-\alpha)/(2s+4),$ which by the Sobolev imbedding (any $\alpha>d/2$) also implies a contraction rate in the uniform norm $\|\cdot\|_\infty$.

\smallskip

Except for the log-factor, the rates obtained in Theorem \ref{rates} are optimal, as the following proposition shows. Removal of the log-factor is possible by more sophisticated prior choices, but for the main results of this article that follow, this will not be relevant.

\smallskip

\begin{proposition} \label{sobrates}
We have for every $s>2+d/2, B>0, 0 <\alpha < s, f_{\min}>0$, any $g$ as in Theorem \ref{rates} and $E_f^Y$ the expectation operator corresponding to $P_{f}^Y$ that as $\varepsilon \to 0$ $$\inf_{\tilde f=\tilde f(Y,g)}~\sup_{f \in C^s(\mathcal O): \|f\|_{C^s(\mathcal O)} \le B, f \ge f_{\min} >0} ~\varepsilon^{-(2s-2\alpha)/(2s+4+d)} E_f^Y \|\tilde f - f\|_{H^\alpha(\mathcal O)} \simeq c,$$ where $c = c(\alpha, s,B, g_{\min}, f_{\min}, d, vol(\mathcal O))$ is a finite positive constant.

\end{proposition}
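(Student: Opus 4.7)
The plan is a standard multiple-hypothesis minimax lower bound (Fano/Tsybakov), where the key analytic input is the fact that the forward map $f\mapsto u_f$ acts like a smoothing operator of order $2$ near any $f^*$, so that the inverse problem has ill-posedness degree $2$. The target rate $\varepsilon^{(2s-2\alpha)/(2s+4+d)}$ is exactly what one gets from the standard ``Sobolev scale against ill-posedness $2$'' calculation.

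First, fix a reference potential $f^*\equiv f_{\min}+1$, which lies strictly inside the class for all small perturbations. Pick a non-negative $\psi\in C_c^\infty(\mathcal O)$ with $\|\psi\|_\infty=1$, and for resolution $m\in\mathbb N$ and lattice points $x_k$ with $\mathrm{supp}(\psi(2^m(\cdot-x_k)))\subset\mathcal O$, let $\psi_{m,k}(x)=\psi(2^m(x-x_k))$. The number of such $k$ is $N\asymp 2^{md}$. For $\eta\in\{0,1\}^N$ set
\[
f_\eta=f^*+\delta\sum_{k=1}^N \eta_k\,\psi_{m,k},\qquad \delta=c_0\,2^{-ms},
\]
with $c_0$ chosen so that $\|f_\eta-f^*\|_{C^s}\le B/2$ and $f_\eta\ge f_{\min}$, which is immediate from $\|\psi_{m,k}\|_{C^s}\lesssim 2^{ms}$ and the disjoint supports of the bumps. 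By Varshamov--Gilbert, select $\mathcal E\subset\{0,1\}^N$ with $\log|\mathcal E|\ge N/8$ and pairwise Hamming distance at least $N/8$.

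Second, bound $\mathrm{KL}(P_{f_\eta}^Y,P_{f_{\eta'}}^Y)=\tfrac{1}{2\varepsilon^2}\|u_{f_\eta}-u_{f_{\eta'}}\|_{L^2}^2$. Write $u_{f_\eta}-u_{f_{\eta'}}=V_{f^*}^{-1}(u_{f^*}(f_\eta-f_{\eta'}))+R_{\eta,\eta'}$, where $V_{f^*}=\Delta/2-f^*$ with zero Dirichlet boundary conditions, and $R_{\eta,\eta'}$ is the quadratic remainder of the forward map. Since $V_{f^*}^{-1}:H^{-2}(\mathcal O)\to L^2(\mathcal O)$ is bounded (elliptic regularity), multiplication by $u_{f^*}\in C^{s+2}$ is bounded on $H^{-2}$, and the bump collection satisfies the ``wavelet-type'' estimate
\[
\bnorm{\sum_k c_k\psi_{m,k}}_{H^{-\beta}}^2 \asymp 2^{-md-2m\beta}\sum_k c_k^2,\qquad \beta\in[0,s],
\]
one gets $\|u_{f_\eta}-u_{f_{\eta'}}\|_{L^2}^2\lesssim \delta^2\cdot 2^{-md-4m}\cdot H(\eta,\eta')+\|R_{\eta,\eta'}\|_{L^2}^2$, with the remainder controlled using the second-order expansion and the $C^s$-boundedness of $f_\eta,f_{\eta'}$ (the remainder is of order $\|f_\eta-f_{\eta'}\|_{L^2}^2\lesssim\delta^2 N 2^{-md}=\delta^2$, which with an extra factor of $\delta$ from the quadratic structure is negligible compared to the linear term for small $\delta$). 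Using $H(\eta,\eta')\le N\asymp 2^{md}$ gives
\[
\mathrm{KL}\lesssim \frac{\delta^2\,2^{-4m}}{\varepsilon^2}.
\]

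Third, the choice $2^m\asymp\varepsilon^{-2/(2s+4+d)}$, which balances $\delta=c_0\,2^{-ms}$ with $\varepsilon\cdot 2^{m(d+4)/2}$, yields $\mathrm{KL}\lesssim 2^{md}\asymp N$ with any prescribed small constant in front. For the separation, using the same wavelet-type characterization for $H^\alpha$,
\[
\|f_\eta-f_{\eta'}\|_{H^\alpha}^2 \asymp \delta^2\,2^{2m\alpha-md}\,H(\eta,\eta')\gtrsim \delta^2\,2^{2m\alpha}\asymp \varepsilon^{4(s-\alpha)/(2s+4+d)}
\]
for all pairs in $\mathcal E$. Applying Fano's inequality (or Tsybakov's Proposition 2.3 in his textbook) then delivers
\[
\inf_{\tilde f}\max_{\eta\in\mathcal E} E_{f_\eta}^Y\|\tilde f-f_\eta\|_{H^\alpha}\gtrsim \varepsilon^{(2s-2\alpha)/(2s+4+d)},
\]
which proves the lower bound, while the matching upper bound follows from the contraction rate in Theorem~\ref{rates} combined with the interpolation inequality already used in the display after Theorem~\ref{rates}.

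The main obstacle is the PDE input in step two: one needs (i) $V_{f^*}^{-1}:H^{-2}(\mathcal O)\to L^2(\mathcal O)$ bounded with the correct quantitative constants on our $f^*$, and (ii) a quadratic-remainder estimate $\|u_{f_\eta}-u_{f^*}-V_{f^*}^{-1}(u_{f^*}(f_\eta-f^*))\|_{L^2}\lesssim \|f_\eta-f^*\|_{L^2}^2$, uniform over the packing. Both should be extractable from the analytic facts already collected in Section~\ref{sop} of the paper (in particular Proposition~\ref{sbaest}), but the remainder term must be checked carefully to ensure it does not spoil the balance of scales.
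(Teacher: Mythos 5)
Your overall architecture (Varshamov--Gilbert packing plus Fano, with the rate coming from ``Sobolev scale against ill-posedness degree $2$'') is the right one, and your scale-balancing is correct. But there is a genuine gap at the crux of Step two: you take the bumps $\psi_{m,k}=\psi(2^m(\cdot-x_k))$ with $\psi\ge 0$, and for such non-oscillating bumps the claimed estimate
$$\bnorm{\sum_k c_k\psi_{m,k}}_{H^{-\beta}}^2 \asymp 2^{-md-2m\beta}\sum_k c_k^2$$
is false for $\beta>d/2$ (in particular for $\beta=2$ when $d=2,3$). Indeed, testing a single bump against a fixed smooth $\phi$ with $\phi(x_k)=1$ gives $\langle \psi_{m,k},\phi\rangle \approx 2^{-md}\int\psi \neq 0$, so $\|\psi_{m,k}\|_{H^{-2}}\gtrsim 2^{-md}$, which exceeds the claimed $2^{-md/2-2m}$ when $d<4$. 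Worse, since all your coefficients $c_k=\delta\eta_k$ are non-negative, there is no cancellation: testing $f_\eta-f^*$ against a positive function (recall $V_{f^*}$ has a positive Feynman--Kac kernel) shows $\|u_{f_\eta}-u_{f^*}\|_{L^2}\gtrsim \int(f_\eta-f^*)\asymp\delta$, so $\mathrm{KL}\gtrsim\varepsilon^{-2}\delta^2=\varepsilon^{-2}2^{-2ms}$, which with $2^m\asymp\varepsilon^{-2/(2s+4+d)}$ is $\varepsilon^{-2(4+d)/(2s+4+d)}\gg 2^{md}\asymp\log|\mathcal E|$. The Fano condition fails outright. The fix is to make the perturbations oscillate: use Daubechies wavelets $\Phi_{m,k}$ (enough vanishing moments) or bumps of the form $\Delta^2\chi_{m,k}$, for which the $H^{-2}$ gain of $2^{-2m}$ per scale is genuine; then $f_\eta\ge f_{\min}$ is enforced by the size of $\delta\|\psi\|_\infty$ rather than by positivity. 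Your remainder estimate is then fine (it is exactly Proposition \ref{diff}, giving $\|h\|_{(d)}\|h\|_{(H^2_0)^*}\lesssim\delta\cdot\delta 2^{-2m}$, smaller than the linear term by the factor $\delta$).

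It is worth noting that the paper circumvents this entire issue by building the packing in \emph{solution space}: it takes $u_m=g_0+h_m$ with $h_m$ explicit wavelet combinations at regularity $s+2$, so the KL divergence $\varepsilon^{-2}\|u_m-u_{m'}\|_{L^2}^2$ is computed exactly by Parseval with no linearization or remainder control, and the hypotheses $f_m=\Delta u_m/(2u_m)$ inherit the $H^\alpha$-separation from $\Delta h_m$. Separately, your appeal to Theorem \ref{rates} for the matching upper bound does not quite work: that theorem assumes $\log f_0\in C^s_c(\mathcal O)$ with the decay condition (\ref{decay}) and carries a logarithmic factor, whereas the proposition requires a log-free rate uniformly over the full ball $\{\|f\|_{C^s}\le B, f\ge f_{\min}\}$; the paper instead uses the frequentist plug-in estimator $\hat f$ of Lemma \ref{tailexp}, whose exponential tail bounds hold uniformly over that class and integrate to the stated risk bound.
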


\smallskip

Inspection of the proof of this proposition reveals that the minimax convergence rate $\varepsilon^{(2s-2\alpha)/(2s+4+d)}$ improves by a  at most a power of $\log(1/\varepsilon)$ if assumption (\ref{decay}) is also imposed on $f$, and also that in case $\alpha=0$, $L^2$-loss can be replaced by $L^p$-loss for any $1\le p<\infty$. Moreover the lower bound for the minimax rate can be shown to remain valid when $f$ is such that $\log f$ has compact support in $\mathcal O$.

\section{Information Geometry}

While Theorem \ref{rates} is already quite satisfactory in that it shows that the posterior concentrates near the `true' value, we may hope that the posterior distribution reveals more precise information about $f_0$. This will ultimately be formalised by proving a Bernstein-von Mises theorem in a suitable function space, showing that the posterior distribution is approximated by a canonical Gaussian distribution with a covariance structure that is `minimal' in an information-theoretic sense. In the language of asymptotic statistics \cite{IK81, LC86, vdV98} the structure of this covariance is determined by the `LAN'-expansion of the log-likelihood ratio process, which we obtain now for the observation scheme (\ref{model0}) considered here. From it we can then construct the Gaussian measure that has to appear as the limit of the scaled and centred posterior measure.

\subsection{Score operator and LAN expansion}

We start with the following simple lemma which is a standard application of equation (\ref{likeli}) in the Appendix.
\begin{lemma} \label{nonlan}
Let $\ell(f) = \ell (f,Y) = \log p_f(Y)$ be the log-likelihood function arising from (\ref{model0}), write $G(f)=u_f$, and assume $Y= G(f_0) + \eps \mathbb W$ for some fixed $f_0$ such that $G(f_0) \in L^2(\mathcal O)$. Then for any $f,g$ for which $G(f),G(g)$ are contained in $L^2(\mathcal O)$ we have
$$\ell(f)-\ell(g) = -\frac{1}{2\varepsilon^2} \left(\| G(f) - G(f_0)\|_{L^2(\mathcal O)}^2 - \|G(g) - G(f_0)\|_{L^2(\mathcal O)}^2\right) + \frac{1}{\varepsilon} \langle G(f)-G(g), \mathbb W \rangle_{L^2(\mathcal O)}.$$
\end{lemma}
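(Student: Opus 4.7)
The statement is essentially an algebraic rearrangement once one has the explicit form of the log-likelihood in the Gaussian white noise model. My plan is to invoke equation (\ref{likeli}) from the Appendix, which (in the standard way for a Gaussian shift experiment on $L^2(\mathcal O)$) will give
\[
\ell(f) = \log p_f(Y) = \frac{1}{\varepsilon^2}\langle G(f), Y\rangle_{L^2(\mathcal O)} - \frac{1}{2\varepsilon^2}\|G(f)\|_{L^2(\mathcal O)}^2
\]
up to an additive term that does not depend on $f$ (here $\langle \cdot, Y\rangle_{L^2}$ is interpreted as the usual isonormal/white noise action, which is well-defined because $G(f)\in L^2(\mathcal O)$ by hypothesis, and constants drop out when we take differences).

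\textbf{Step 1.} Subtract the two log-likelihoods to get
\[
\ell(f)-\ell(g) = \frac{1}{\varepsilon^2}\langle G(f)-G(g), Y\rangle_{L^2(\mathcal O)} - \frac{1}{2\varepsilon^2}\bigl(\|G(f)\|_{L^2(\mathcal O)}^2 - \|G(g)\|_{L^2(\mathcal O)}^2\bigr).
\]

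\textbf{Step 2.} Substitute $Y = G(f_0) + \varepsilon \mathbb W$ and split the linear term by linearity of the stochastic integral:
\[
\frac{1}{\varepsilon^2}\langle G(f)-G(g), Y\rangle_{L^2(\mathcal O)} = \frac{1}{\varepsilon^2}\langle G(f)-G(g), G(f_0)\rangle_{L^2(\mathcal O)} + \frac{1}{\varepsilon}\langle G(f)-G(g), \mathbb W\rangle_{L^2(\mathcal O)}.
\]
This immediately yields the stochastic term appearing in the statement.

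\textbf{Step 3.} For the deterministic part, complete the square by expanding
\[
\|G(f)-G(f_0)\|_{L^2(\mathcal O)}^2 - \|G(g)-G(f_0)\|_{L^2(\mathcal O)}^2 = \bigl(\|G(f)\|_{L^2(\mathcal O)}^2-\|G(g)\|_{L^2(\mathcal O)}^2\bigr) - 2\langle G(f)-G(g), G(f_0)\rangle_{L^2(\mathcal O)},
\]
which, after multiplying by $-1/(2\varepsilon^2)$, is precisely the deterministic part of the claimed identity. Combining Steps 2 and 3 gives the lemma.

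\textbf{Anticipated difficulty.} There is no real obstacle: the content is purely the Cameron-Martin/Girsanov formula for the white noise model, and once the form of $\ell(f)$ from (\ref{likeli}) is in hand the rest is an expansion. The only subtle point to verify is that the pairing $\langle G(f)-G(g), \mathbb W\rangle_{L^2(\mathcal O)}$ makes sense, but this is fine because $G(f), G(g)\in L^2(\mathcal O)$ by assumption, so $\mathbb W$ acts as a bounded linear functional on their difference in the isonormal sense.
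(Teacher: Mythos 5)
Your proof is correct and is precisely what the paper intends: Lemma \ref{nonlan} is stated as "a standard application of equation (\ref{likeli})" with no further proof given, and your three steps (take differences of (\ref{likeli}), substitute $Y=G(f_0)+\varepsilon\mathbb W$, complete the square) supply exactly the omitted algebra. The only point worth checking — that $\langle G(f)-G(g),\mathbb W\rangle_{L^2(\mathcal O)}$ is well defined via the isonormal action on $L^2$ — is the one you correctly flag.
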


Since $G$ is non-linear we next have to find a suitable linear approximation to $G(f)-G(f+h)$ for a small perturbation $h$ of $f$. In statistical terminology this amounts to finding the `score operator' of the model. The following proposition shows that this score operator can be understood as the solution operator for an \textit{inhomogeneous} Schr\"odinger equation (\ref{scoreop}) (see Section \ref{sop} for existence of its solutions and further details). The remainder terms of the linear approximation can be controlled in a `weak' norm that we define now: 
 we introduce the functional
\begin{equation}\label{h0norm}
\|f\|_{(H_0^2)^*} = \sup_{\varphi \in C_0(\mathcal O), \|\varphi\|_{H^2(\mathcal O)}\le 1}\left| \int_\mathcal O \varphi(x) f(x)dx\right|.
\end{equation}
We clearly have $\|f\|_{(H_0^2)^*} \le \|f\|_{L^2}$ but $\|\cdot\|_{(H_0^2)^*}$ generates a strictly weaker topology than the norm of $L^2(\mathcal O)$, a fact that will be crucial in our proofs. 

We also write $\|h\|_{(d)}=\|h\|_{L^2}$ when $d<4$ but $\|h\|_{(d)}=\|h\|_\infty$ for $d \ge 4$. This accommodates the fact that the Sobolev space $H^2(\mathcal O)$, which plays a key role for regularity estimates of solutions of the Schr\"odinger equation, embeds into $C(\mathcal O)$ only when $d<4$.

\begin{proposition} [Score operator] \label{diff}
Let $G(f)=u_f$ solve (\ref{PDE}) where $f\in C^s(\mathcal O), f > 0,$ and $g \in C^{s+2}(\bar{\mathcal O}),$ for some $s>0$. For $h \in C(\bar {\mathcal O})$, denote by $DG_f[h]=v_h$ the solution $v$ to the inhomogeneous Schr\"odinger equation
\begin{equation} \label{scoreop}
\frac{\Delta v}{2}  - f v = h u_f \text{ on } \mathcal O~\text{s.t. } v=0 \text{ on } \partial \mathcal O.
\end{equation}
Let $f, h$ further be such that $f+h \ge f_{\min}>0, \|f\|_{C^s(\mathcal O)} +\|h\|_{C^s(\mathcal O)} \le D$ for some $D, f_{\min}>0$. Then there exists $c_1=c_1(\mathcal O, d, D, f_{\min}, \|g\|_{C^{s+2}(\bar{\mathcal O})})$ such that $$\|G(f+h) - G(f) -  DG_f[h]\|_{L^2} \le c_1\|h\|_{(d)} \|h\|_{(H^2_0)^*}.$$  Moreover the score operator $DG_{f}$ satisfies the estimate
\begin{equation} \label{smt}
\|DG_{f}[h]\|_{L^2} \le c_2 \|h\|_{(H^2_0)^*} \le c_2 \|h\|_{L^2},~~ h \in C(\bar{\mathcal O}),
\end{equation}
where the constant $c_2$ depends only on upper bounds for $\|f\|_{C^s(\mathcal O)}, \|g\|_{C^{s+2}(\bar{\mathcal O})}$ and on $\mathcal O, d$.
\end{proposition}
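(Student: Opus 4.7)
The entire proof hinges on the following duality-based $L^2$ bound, which I would establish first: for any source $\psi$ and solution $v$ of $\frac{\Delta}{2}v - fv = \psi$ with $v|_{\partial\mathcal O}=0$, one has $\|v\|_{L^2}\le C\|\psi\|_{(H_0^2)^*}$, with $C$ depending only on $\mathcal O, d, f_{\min}$ and an upper bound on $\|f\|_\infty$. To prove this I would test against arbitrary smooth $\zeta\in C_c^\infty(\mathcal O)$, let $\phi$ solve the self-adjoint auxiliary problem $\frac{\Delta}{2}\phi - f\phi=\zeta$ with $\phi|_{\partial\mathcal O}=0$, and invoke the standard $H^2$ elliptic regularity result of Proposition \ref{sbaest} to get $\phi\in C_0(\mathcal O)\cap H^2$ with $\|\phi\|_{H^2}\lesssim\|\zeta\|_{L^2}$. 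Two applications of Green's formula (all boundary terms vanish) produce
\[
\int_\mathcal O v\,\zeta\,dx = \int_\mathcal O \phi\,\psi\,dx \le \|\psi\|_{(H_0^2)^*}\|\phi\|_{H^2} \lesssim \|\psi\|_{(H_0^2)^*}\|\zeta\|_{L^2},
\]
and the density of $C_c^\infty$ in $L^2$ yields the claim on taking the supremum over $\zeta$ of unit $L^2$-norm.

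The score bound (\ref{smt}) is then immediate upon choosing $\psi = h u_f$: the multiplier inequality (\ref{mult}) combined with $u_f\varphi\in C_0(\mathcal O)$ whenever $\varphi\in C_0(\mathcal O)$ yields $\|h u_f\|_{(H_0^2)^*}\lesssim \|u_f\|_{\mathcal C^2}\|h\|_{(H_0^2)^*}$, while $\|u_f\|_{\mathcal C^2}$ is finite by the elliptic regularity estimates of Section \ref{sop} applied to the original PDE (\ref{PDE}), in terms of $\|f\|_{C^s}$ and $\|g\|_{C^{s+2}(\bar{\mathcal O})}$.

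For the remainder, set $w := u_{f+h}-u_f$ and $r := w - DG_f[h]$. Direct subtraction of the governing equations, combined with $\frac{\Delta}{2}u_f = fu_f$, gives
\[
\frac{\Delta}{2}w - (f+h)w = h u_f,\qquad \frac{\Delta}{2}r - fr = hw,
\]
both with zero Dirichlet data. Applying the duality estimate to the first equation—the constants pass through uniformly because $f+h\ge f_{\min}$ and $\|f+h\|_{C^s}\le 2D$—yields $\|w\|_{L^2}\le C\|h\|_{(H_0^2)^*}$ exactly as in the score bound. Applying it to the second gives $\|r\|_{L^2}\le C\|hw\|_{(H_0^2)^*}$.

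It remains to bound $\|hw\|_{(H_0^2)^*}$. For any admissible test $\varphi\in C_0(\mathcal O)\cap H^2$ with $\|\varphi\|_{H^2}\le 1$, Cauchy-Schwarz gives $\bigl|\int_\mathcal O hw\varphi\,dx\bigr|\le \|w\|_{L^2}\|h\varphi\|_{L^2}$, and $\|h\varphi\|_{L^2}\lesssim \|h\|_{(d)}$: for $d<4$ one estimates $\|h\varphi\|_{L^2}\le \|h\|_{L^2}\|\varphi\|_\infty$ together with the Sobolev embedding $H^2\hookrightarrow L^\infty$; for $d\ge 4$ one uses the trivial $\|h\varphi\|_{L^2}\le \|h\|_\infty\|\varphi\|_{L^2}$. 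Chaining everything yields $\|r\|_{L^2}\lesssim \|h\|_{(d)}\|h\|_{(H_0^2)^*}$, which is the stated remainder bound. The principal technical point is to verify uniformity of the constants in the duality estimate across the admissible potentials $f+h\ge f_{\min}$ with $\|f+h\|_{C^s}\le 2D$; this reduces to tracking dependencies in the elliptic regularity estimates of Proposition \ref{sbaest}, which is the reason why the hypotheses of the proposition are phrased in terms of $D$ and $f_{\min}$ rather than $f$ itself.
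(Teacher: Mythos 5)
Your proof is correct and follows essentially the same route as the paper: both rest on the duality bound $\|V_f[\psi]\|_{L^2}\lesssim\|\psi\|_{(H_0^2)^*}$ (the paper's Lemma \ref{ape}, eq.~(\ref{ape-0})) and on writing the Taylor remainder as the solution of a further inhomogeneous Schr\"odinger equation with zero boundary data. The only (harmless) variation is in the remainder step: you keep the potential $f$ and source $hw$ and estimate $\|hw\|_{(H_0^2)^*}\le\|w\|_{L^2}\|h\varphi\|_{L^2}$ by Cauchy--Schwarz plus the Sobolev embedding, whereas the paper keeps the potential $f+h$ and source $hv_h$ and instead pulls out $\|h\|_{(H_0^2)^*}$ via the multiplier inequality together with the $H^2$ (resp.~$\mathcal C^2$) regularity of $v_h$; both produce the same factor $\|h\|_{(d)}\|h\|_{(H_0^2)^*}$.
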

\begin{proof}
Let us write $u=u_f$ and $u_h=u_{f+h}$ in slight abuse of notation, in this proof. Then in $\mathcal O$ we must have
$$(\Delta/2) u_h - (f+ h) u_h = (\Delta/2) u - f u $$ or equivalently
$$(\Delta/2) (u_h-u) - (f+h) (u_h -u) = h u.$$ Thus if $v_h$ solves (\ref{scoreop}) then 
$$(\Delta/2) (u_h - u - v_h) - (f+h)(u_h - u -  v_h) =   h v_h$$ on $\mathcal O$ and $u_h - u - v_h =0$ on $\partial \mathcal O$. Conclude that $w_h=u_h - u -  v_h$ is itself a solution to the inhomogeneous Schr\"odinger equation 
\begin{equation} \label{sol}
(\Delta/2) w_h - (f+ h) w_h = h v_h,~~ f+h \ge f_{\min}>0,
\end{equation}
with zero boundary condition on $\partial \mathcal O$. Lemma \ref{ape}, specifically (\ref{ape-0}) below,  and definition (\ref{h0norm}) now imply that $$\|w_h\|_{L^2} \le C \|h v_h\|_{(H^2_0)^*} \le C \|h\|_{(H^2_0)^*} \sup_{\|\phi\|_{H^2}\le 1}\|\phi v_h\|_{H^2}.$$ Using (\ref{mult0}) when $d<4$ gives $\|\phi v_h\|_{H^2} \le c\|\phi\|_{H^2}\|v_h\|_{H^2}$ whereas for $d\ge 4$ we use (\ref{mult}) and have to replace $\|v_h\|_{H^2}$ by $\|v_h\|_{\mathcal C^2}$ in the last inequality. Finally since $v_h$ solves (\ref{scoreop}), by the regularity estimates in Lemma \ref{ape} with $\beta=0$ and since $\|u_f\|_{\mathcal C^2} \le c(D, \|g\|_{C^{s+2}(\bar{\mathcal O})})$ by Proposition \ref{sbaest},
$$\|v_h\|_{H^2}  \le C' \|u_f h\|_{L^2} \le C'' \|h\|_{L^2},~\|v_h\|_{\mathcal C^2} \le C' \|u_f h\|_{\mathcal C^0} \le C'' \|h\|_\infty,$$ proving the first claim. Inequality (\ref{smt}) now follows similarly, using the definition of $DG_f$ and again Lemma \ref{ape}, (\ref{mult}) and the bound on $\|u_{f}\|_{\mathcal C^2}$.
\end{proof}

We immediately obtain the path-wise LAN expansion for the log-likelihood ratio process.

\begin{proposition}
For any $f_0, h \in C^s(\bar{\mathcal O}), f_0>0, g \in C^{s+2}(\bar{\mathcal O}), s>0,$ if $Y=G(f_0)+\varepsilon \mathbb W$ then we have as $\varepsilon \to 0$ that
$$\log \frac{p_{f_0 + \varepsilon h}}{p_{f_0}}(Y) = \langle DG_{f_0}[h], \mathbb W \rangle_{L^2} - \frac{1}{2}\|DG_{f_0}[h]\|_{L^2}^2 +o_{P_{f_0}^Y}(1),$$ and the LAN-norm is given by $$\|h\|_{LAN} \equiv \|DG_{f_0}[h]\|_{L^2}.$$
\end{proposition}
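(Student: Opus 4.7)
The plan is to combine the exact log-likelihood formula from Lemma \ref{nonlan} with the quadratic linearisation estimate of Proposition \ref{diff}. First I would apply Lemma \ref{nonlan} with $f=f_0+\varepsilon h$ and $g=f_0$, which reduces the task to analysing
\[
\ell(f_0+\varepsilon h)-\ell(f_0)
 = -\tfrac{1}{2\varepsilon^{2}}\|G(f_0+\varepsilon h)-G(f_0)\|_{L^{2}}^{2}
 + \tfrac{1}{\varepsilon}\langle G(f_0+\varepsilon h)-G(f_0),\mathbb W\rangle_{L^{2}}.
\]
Next I would set $R_{\varepsilon}:=G(f_0+\varepsilon h)-G(f_0)-\varepsilon\,DG_{f_0}[h]$, using the linearity of $DG_{f_0}$. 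Since $f_0,h\in C^{s}(\bar{\mathcal O})$ with $f_0>0$, for all $\varepsilon$ small enough $f_0+\varepsilon h\ge f_{\min}/2$ and the $C^{s}$-norms remain bounded, so Proposition \ref{diff} applies and yields
\[
\|R_{\varepsilon}\|_{L^{2}}
 \le c_{1}\|\varepsilon h\|_{(d)}\|\varepsilon h\|_{(H^{2}_{0})^{\ast}}
 \le c_{1}\varepsilon^{2}\|h\|_{(d)}\|h\|_{(H^{2}_{0})^{\ast}}=O(\varepsilon^{2}).
\]

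Plugging the decomposition $G(f_0+\varepsilon h)-G(f_0)=\varepsilon DG_{f_0}[h]+R_{\varepsilon}$ into the deterministic quadratic term gives
\[
\tfrac{1}{2\varepsilon^{2}}\|G(f_0+\varepsilon h)-G(f_0)\|_{L^{2}}^{2}
 = \tfrac{1}{2}\|DG_{f_0}[h]\|_{L^{2}}^{2}
 + \tfrac{1}{\varepsilon}\langle DG_{f_0}[h],R_{\varepsilon}\rangle_{L^{2}}
 + \tfrac{1}{2\varepsilon^{2}}\|R_{\varepsilon}\|_{L^{2}}^{2},
\]
and by Cauchy--Schwarz together with the bound $\|DG_{f_0}[h]\|_{L^{2}}\le c_{2}\|h\|_{L^{2}}$ from (\ref{smt}), the last two terms are $O(\varepsilon)$ and $O(\varepsilon^{2})$ respectively, hence $o(1)$.

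For the stochastic term, the same decomposition yields
\[
\tfrac{1}{\varepsilon}\langle G(f_0+\varepsilon h)-G(f_0),\mathbb W\rangle_{L^{2}}
 = \langle DG_{f_0}[h],\mathbb W\rangle_{L^{2}}
 + \tfrac{1}{\varepsilon}\langle R_{\varepsilon},\mathbb W\rangle_{L^{2}}.
\]
Under $P_{f_0}^{Y}$ the variable $\langle R_{\varepsilon},\mathbb W\rangle_{L^{2}}$ is centred Gaussian with variance $\|R_{\varepsilon}\|_{L^{2}}^{2}=O(\varepsilon^{4})$, so $\varepsilon^{-1}\langle R_{\varepsilon},\mathbb W\rangle_{L^{2}}$ has standard deviation $O(\varepsilon)$ and is therefore $o_{P_{f_0}^{Y}}(1)$. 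Summing the deterministic and stochastic parts gives the claimed LAN expansion, and the identification $\|h\|_{LAN}=\|DG_{f_0}[h]\|_{L^{2}}$ is immediate from the form of the quadratic term. The only place where care is needed is the remainder bound in Proposition \ref{diff}: its quadratic form $\|h\|_{(d)}\|h\|_{(H^{2}_{0})^{\ast}}$ is precisely what makes the deterministic cross term $O(\varepsilon)$ rather than $O(1)$, which is the heart of why the LAN expansion holds here.
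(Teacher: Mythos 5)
Your proof is correct and follows exactly the route of the paper: combine Lemma \ref{nonlan} (with $f=f_0+\varepsilon h$, $g=f_0$) with the linearisation of Proposition \ref{diff}, whose remainder bound $c_1\varepsilon^2\|h\|_{(d)}\|h\|_{(H^2_0)^*}=O(\varepsilon^2)$ kills both the deterministic cross term and, via the Gaussianity (the paper invokes Markov's inequality, $\langle g,\mathbb W\rangle=O_P(\|g\|_{L^2})$, to the same effect) the stochastic remainder. Your added observation that $f_0+\varepsilon h$ stays bounded away from zero for small $\varepsilon$ is a harmless bit of care the paper leaves implicit.
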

\begin{proof}
Just combine Lemma \ref{nonlan} with Proposition \ref{diff} and notice that $\varepsilon^2 \|h\|_{(d)} \|h\|_{(H^2_0)^*} \le \varepsilon^2 \|h\|^2_\infty$ as well as that $\langle g, \mathbb W \rangle =O_P(\|g\|_{L^2})$ by Markov's inequality.
\end{proof}

\subsection{Information bound for one-dimensional subproblems}\label{infbd}

To apply the general theory of statistical efficiency (see Section \ref{crlbs} in the appendix for a review) for estimating a linear functional $\Psi(f)$ of $f$ at $f_0$ we need to find the Riesz-representer of such a functional for the LAN-inner product $\langle \cdot, \cdot \rangle_{LAN} \equiv \langle DG_{f_0}[\cdot], DG_{f_0}[\cdot]\rangle_{L^2(\mathcal O)}.$ We restrict to functionals of the form $\Psi(f)= \langle f, \psi \rangle_{L^2}$ for some fixed compactly supported function $\psi$. Using the results about the $f$-Green operator $V_f$ for the Schr\"odinger operator $S_f$ from Section \ref{sop} we have $$\langle h_1, h_2 \rangle_{LAN} = \langle DG_{f_0}[h_1], DG_{f_0}[h_2] \rangle_{L^2} = \langle V_{f_0} [u_{f_0}h_1], V_{f_0}[u_{f_0}h_2] \rangle_{L^2}.$$
For $f_0 \in C^s(\bar{\mathcal O}), g \in C^{s+2}(\bar{\mathcal O}), f_0,g>0,s>0,$ we know from Proposition \ref{sbaest} and (\ref{jensen}) that $u_{f_0} \ge c>0$ on $\bar{\mathcal O}$, and if also $s>2$ then the chain rule further implies $1/u_{f_0} \in C^4(\mathcal O)$. As a consequence for any $\psi \in C^4_c(\mathcal O)$ we can define
\begin{equation} \label{tildp}
\tilde \Psi = \tilde \Psi_{f_0} = [S_{f_0} S_{f_0}[\psi/u_{f_0}]]/u_{f_0}
\end{equation}
which has the point-wise representation
\begin{equation} \label{tildp2}
\tilde \Psi u_{f_0} =   \frac{1}{4} \Delta^2 \left[\frac{\psi}{u_{f_0}}\right] -\frac{1}{2} \Delta \left[\frac{f_0\psi}{u_{f_0}} \right] - \frac{f_0}{2} \Delta \left[\frac{\psi}{u_{f_0}}\right]+ \frac{f_0^2 \psi}{u_{f_0}},
\end{equation}
and hence is also compactly supported in $\mathcal O$. 
Then for all $h \in C(\bar {\mathcal O})$, Proposition \ref{inverse} implies
 \begin{equation} \label{invop}
\langle \tilde \Psi, h \rangle_{LAN} = \langle V_{f_0}[u_{f_0} \tilde \Psi], V_{f_0}[h u_{f_0}] \rangle_{L^2} =\langle V_{f_0}V_{f_0} S_{f_0} S_{f_0} [\psi/u_{f_0}], h u_{f_0} \rangle_{L^2} = \langle \psi, h \rangle_{L^2}
\end{equation}
so that $\tilde \Psi$ is indeed the Riesz-representer. From the results in Section \ref{crlbs} with $H= C^s(\bar{\mathcal O})$ and again Proposition \ref{inverse} it now follows that the information lower bound for estimating $\langle f, \psi \rangle$ at $f=f_0$ from observations $Y$ in (\ref{model0}) equals 
\begin{equation}
\|\tilde \Psi_{f_0}\|_{LAN}^2 = \|V_{f_0}[u_{f_0}\tilde \Psi_{f_0}]\|^2_{L^2} = \|S_{f_0}[\psi/u_{f_0}]\|_{L^2}^2.
\end{equation}

\subsection{The `canonical' Gaussian measure $\mathcal N_{f_0}$} \label{canmes}

Collecting the Riesz-representers $\tilde \Psi$ for all linear functions $(\Psi(f)=\langle f, \psi \rangle_{L^2(\mathcal O)}: \psi \in C_c^\infty(\mathcal O))$ from Section \ref{infbd}, and for $S_{f_0}$ the Schr\"odinger operator from Section \ref{sop}, we define the centred Gaussian process 
\begin{equation} \label{marg2}
(X(\psi): \psi \in C^\infty_c(\mathcal O)) ~\textit{s.t.}~ E[X(\psi)X(\psi')] = \langle S_{f_0}[\psi/u_{f_0}], S_{f_0}[\psi'/u_{f_0}] \rangle_{L^2}.
\end{equation}
By Kolmogorov's consistency theorem (Section 12.1 in \cite{D02}) this process defines a probability measure $\mathcal N_{f_0}$ on the cylindrical $\sigma$-field of $\mathbb R^{C_c^\infty(\mathcal O)}$.

In the Bernstein-von Mises theorem that follows we will want to prove weak convergence in probability of the centred and scaled posterior measure towards $\mathcal N_{f_0}$ in a suitable Banach space. The notion of weak convergence we will employ below naturally requires tightness of the limit law, and we thus first investigate when the measure $\mathcal N_{f_0}$ extends to a tight Gaussian Borel probability measure in the scale of dual spaces $(C^\alpha_c(\mathcal O))^*, \alpha >0$.

\begin{proposition} \label{gauss0} 
Let $f_0 \in C^s(\mathcal O), g \in C^{s+2}(\bar{\mathcal O}), s>0,$ satisfy $f_0,g>0$.

\smallskip

A)  For any $\alpha>2+d/2$, the law $\mathcal N_{f_0}$ induced by the Gaussian process (\ref{marg2}) defines a tight Gaussian Borel probability measure on $(C^\alpha_c(\mathcal O))^*$. 

B) If $\alpha<2+d/2$ then for every $f_0 \in C^s(\mathcal O), s>2+d/2,$ we have $$\mathcal N_{f_0}(x:\|x\|_{(C^\alpha_c(\mathcal O))^*}<\infty)=0.$$

C) Both A) and B) remain true if $(C^\alpha_c(\mathcal O))^*$ is replaced by $(C^\alpha_K(\mathcal O))^*=(C_K^\alpha(\mathcal O), \|\cdot\|_{C^\alpha(\mathcal O)})^*$ for any compact set $K \subset \mathcal O$ with non-empty interior. 
\end{proposition}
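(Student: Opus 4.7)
The overall strategy is to realise $\mathcal N_{f_0}$ as (the law of) a Gaussian random element of a negative-order Sobolev space on $\mathcal O$, and then transfer the resulting tightness to $(C^\alpha_c(\mathcal O))^*$ via a continuous Sobolev embedding. Set $A\psi := S_{f_0}[\psi/u_{f_0}]$. By Proposition \ref{sbaest} together with \eqref{jensen} the function $1/u_{f_0}$ is bounded and $C^2$-smooth on $\bar{\mathcal O}$, so using \eqref{mult} and \eqref{mult0} the map $A$ is bounded from $H^2(\mathcal O)$ into $L^2(\mathcal O)$. Let $\{\xi_{l,r}\}$ be iid $N(0,1)$ variables and build a Gaussian linear functional by setting, for $\psi\in C^\infty_c(\mathcal O)$,
$$
X(\psi) := \sum_{l,r}\xi_{l,r}\langle A\psi,\Phi^{\mathcal O}_{l,r}\rangle_{L^2(\mathcal O)},
$$
which converges in $L^2(\Omega)$ to a centred Gaussian of variance $\|A\psi\|_{L^2}^2$, matching \eqref{marg2}. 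Writing $X_{l,r}:=X(\Phi^{\mathcal O}_{l,r})$ and using the scaling $\|\Phi^{\mathcal O}_{l,r}\|_{H^2}\lesssim 2^{2l}$ (from \eqref{wavprop} and its derivative version), boundedness of $A$ gives $E[X_{l,r}^2]=\|A\Phi^{\mathcal O}_{l,r}\|_{L^2}^2\lesssim 2^{4l}$. By a wavelet characterisation of Sobolev norms for the basis \eqref{basi}, for any $\beta>2+d/2$,
$$
E\|X\|_{H^{-\beta}(\mathcal O)}^{2} \;\asymp\; \sum_{l,r}2^{-2l\beta}\,E[X_{l,r}^{2}] \;\lesssim\; \sum_{l}2^{l(4+d-2\beta)}<\infty.
$$
Hence $X\in H^{-\beta}(\mathcal O)$ almost surely and induces a tight Gaussian Borel measure on this separable Hilbert space. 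For any $\alpha>2+d/2$, choosing $\beta\in(2+d/2,\alpha)$, the continuous embedding $C^\alpha_c(\mathcal O)\hookrightarrow H^\beta(\mathcal O)$ dualises to an injection $H^{-\beta}(\mathcal O)\hookrightarrow(C^\alpha_c(\mathcal O))^*$, under which the pushforward of $\mathcal L(X)$ is the desired tight Gaussian Borel measure $\mathcal N_{f_0}$, proving A).

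For B), I argue by contradiction. Suppose $\mathcal N_{f_0}\bigl(\|x\|_{(C^\alpha_c(\mathcal O))^*}<\infty\bigr)>0$; by the Gaussian zero-one law this probability equals one, and Fernique's theorem then gives $E\|X\|_{(C^\alpha_c(\mathcal O))^*}^2<\infty$. Combined with $|X(\psi)|\le\|X\|_{(C^\alpha_c)^*}\|\psi\|_{C^\alpha}$ this forces the uniform Gaussian variance bound
$$
\sup\bigl\{\|A\psi\|_{L^2}^{2}:\psi\in C^\infty_c(\mathcal O),\ \|\psi\|_{C^\alpha}\le 1\bigr\}<\infty.
$$
To violate it I take scaled interior wavelets. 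For $l$ large, choose $r$ so that $\Phi_{l,r}=\Phi^{\mathcal O}_{l,r}$ has compact support in $\mathcal O$, and set $\tilde\Phi_{l,r}:=2^{-l(\alpha+d/2)}\Phi_{l,r}=2^{-l\alpha}\Phi_{0,r}(2^{l}\cdot)$, so that $\|\tilde\Phi_{l,r}\|_{C^\alpha(\mathcal O)}\lesssim 1$. A change-of-variables computation yields
$$
\|\Delta\tilde\Phi_{l,r}\|_{L^2}\asymp 2^{l(2-\alpha-d/2)},\quad \|\nabla\tilde\Phi_{l,r}\|_{L^2}\asymp 2^{l(1-\alpha-d/2)},\quad \|\tilde\Phi_{l,r}\|_{L^2}\asymp 2^{-l(\alpha+d/2)}.
$$
Expanding $A\tilde\Phi_{l,r}=\tfrac12\Delta(\tilde\Phi_{l,r}/u_{f_0})-f_0\tilde\Phi_{l,r}/u_{f_0}$ by Leibniz, and using that $1/u_{f_0}$ is bounded above and below, the term $\tfrac12(\Delta\tilde\Phi_{l,r})/u_{f_0}$ dominates all others in $L^2$ as $l\to\infty$ precisely when $\alpha<2+d/2$. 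The reverse triangle inequality then gives $\mathrm{Var}(X(\tilde\Phi_{l,r}))\gtrsim 2^{2l(2-\alpha-d/2)}\to\infty$, the desired contradiction.

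Part C) follows from the same scheme: the restriction map $(C^\alpha_c(\mathcal O))^*\to(C^\alpha_K(\mathcal O))^*$ is norm-decreasing, so A) yields $\|X\|_{(C^\alpha_K)^*}\le\|X\|_{(C^\alpha_c)^*}<\infty$ almost surely; and since $K$ has non-empty interior, for every sufficiently large $l$ there exists an interior wavelet $\Phi_{l,r}$ with $\supp(\Phi_{l,r})\subset\mathrm{int}(K)$, so that the scaled wavelets $\tilde\Phi_{l,r}$ belong to $C^\alpha_K(\mathcal O)$ and the argument for B) applies verbatim. The main obstacle is the variance lower bound in part B): one needs to rule out cancellation between the leading $\Delta$-term and the first- and zeroth-order remainders produced by applying Leibniz to $\Delta(\tilde\Phi_{l,r}/u_{f_0})$. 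The scaling identities above separate these contributions by a full order in $l$, which is exactly why the dichotomy between A) and B) occurs at the threshold $\alpha=2+d/2$.
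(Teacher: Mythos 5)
Your Part A) is correct and takes a genuinely different route from the paper: you build an explicit random-series version of $X$, bound $E\|X\|_{H^{-\beta}}^2$ through the weighted wavelet sequence norm using $\|S_{f_0}[\psi/u_{f_0}]\|_{L^2}\lesssim \|\psi\|_{H^2}$, and push the resulting tight Gaussian law forward along the continuous map $H^{-\beta}\to (C^\alpha_c(\mathcal O))^*$. The paper instead dominates the covariance metric by $\|\cdot\|_{C^2}$ and runs Dudley's entropy integral over the unit ball of $C^\alpha_K(\mathcal O)$. Both work for $\alpha>2+d/2$ (note the dual map need not be injective, since $C^\alpha_c$ is not dense in $H^\beta(\mathcal O)$, but continuity is all that is needed to transport the measure).

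Part B) has a genuine gap. Your scaling $\|\Delta\tilde\Phi_{l,r}\|_{L^2}\asymp 2^{l(2-\alpha-d/2)}$ is correct, but this diverges only when $\alpha<2-d/2$, not when $\alpha<2+d/2$. Since $d\ge 2$, the single-wavelet variance argument misses the entire range $2-d/2\le\alpha<2+d/2$; worse, for $2\le\alpha<2+d/2$ one has the uniform bound $\|S_{f_0}[\psi/u_{f_0}]\|_{L^2}\lesssim\|\psi\|_{C^2}\le\|\psi\|_{C^\alpha}$, so the necessary condition you extract from Fernique's theorem (uniformly bounded pointwise variances) is actually satisfied there, and no single test function can produce a contradiction. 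Unboundedness in the range $[2-d/2,\,2+d/2)$ is an intrinsically infinite-dimensional effect coming from the $\asymp 2^{jd}$ nearly orthogonal directions at each scale $j$, so the quantity that must be shown infinite is $E\sup_{\|\psi\|_{C^\alpha}\le 1}|X(\psi)|$, not a single variance. The paper does this by first reducing, via the inverse operator $V_{f_0}$, to a white-noise process in which only the leading $\Delta$-term survives, then taking sign combinations $h_m=\kappa'\sum_r b_{m,r}2^{-j(\alpha+d/2)}\Phi_{j,r}$ over a Varshamov--Gilbert separated subset of $\{-1,1\}^{n_j}$ and applying Sudakov's minoration: the mutual separation is $\gtrsim 2^{j(2-\alpha)}$ while $\sqrt{\log M_j}\asymp 2^{jd/2}$, giving $E\max_m|\langle\mathbb W,\Delta h_m\rangle|\gtrsim 2^{j(2+d/2-\alpha)}$, which diverges exactly for $\alpha<2+d/2$. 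The factor $2^{jd/2}$ gained from the cardinality of the separated set is precisely what moves the threshold from your $2-d/2$ up to the claimed $2+d/2$. Part C) relies on the same single-wavelet bound and inherits the gap.
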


To be precise, the proof of Part A) shows that the process (\ref{marg2}) has a version that acts linearly on $C^\infty_c(\mathcal O)$, and that the cylindrical law of that linear version on $(C^\alpha_c(\mathcal O))^*= (C^\infty_c(\mathcal O), \|\cdot\|_{C^\alpha(\mathcal O)})^* \subset \mathbb R^{C^\infty_c(\mathcal O)}$ extends to a tight Gaussian measure on the Borel-$\sigma$-field induced by $\|\cdot\|_{(C^\alpha_c(\mathcal O))^*}$, when $\alpha>2+d/2$. In Part B), the proof implies that for any version of $(X(\psi))$ and $\alpha<2+d/2$, the norm $\sup_{\psi: \|\psi\|_{C^\alpha_c(\mathcal O)}\le 1}|X(\psi)|=\infty$ almost surely.

For $\alpha>d/2$ the measure $\mathcal N_{f_0}$ is tight and thus the results in Sections \ref{infbd} and \ref{crlbs} imply that its covariance structure represents the information lower bound for estimating $f$ in $(C_c^\alpha(\mathcal O))^*$ or $(C_K^\alpha(\mathcal O))^*$, based on observations from (\ref{model0}). 

One may investigate the limiting case $\alpha=2+d/2$ further, but this requires the introduction of a `logarithmic scale' to measure smoothness of functions, and we abstain from doing so for ease of exposition. Note that one may also show that $\mathcal N_{f_0}$ is \textit{not} tight when $\alpha=2+d/2$, see Remark \ref{sudoku}.

\section{A Bernstein von Mises theorem in $(C^{\alpha}_K(\mathcal O))^*$}

We now turn to the main result of this article which gives an infinite-dimensional normal approximation of the posterior distribution arising from the prior (\ref{prior}), in the small noise limit $\varepsilon \to 0$. The following condition on the prior and on $f_0$ will be employed. It requires $f_0$ to be an `interior' point of the support of the prior (\ref{prior}) -- just as in the finite-dimensional situation a Gaussian approximation of the posterior distribution cannot be expected when the `true value' lies at the boundary of the support $\supp (\Pi)$ of $\Pi$, see also Remark \ref{lesson}.

\begin{condition}\label{interior}
Let $s \in \mathbb N$ satisfy $s>\max (2+d/2, d)$. Suppose the prior $\Pi$ arises from (\ref{prior}) with $J \in \mathbb N$ chosen such that $2^J \simeq \varepsilon^{-2/(2s+4+d)}$, and is based on a $S$-regular wavelet basis, $S>2+s+d$.  Suppose $f_0 >0$ and that $\phi_0=\log f_0 \in C_c^s (\mathcal O)$ satisfies for some fixed $\epsilon>0$ $$|\langle \phi_0, \Phi^\mathcal O_{l,r} \rangle| \le (B-\epsilon) 2^{-l(s+d/2)}\bar l^{-2},  ~\bar l = \max(1,l).$$ 
\end{condition}

Again, the restriction to $s \in \mathbb N$ is just for convenience and could be removed at the expense of introducing  further technicalities in the proofs.

\smallskip

We will interpret the (scaled and centred) posterior distribution $\Pi(\cdot|Y)$ of the random function $f|Y$ as the law of the stochastic process 
\begin{equation} \label{process}
(\varepsilon^{-1} \langle f - \bar f, \psi \rangle_{L^2(\mathcal O)}|Y: \|\psi\|_{C^\alpha_K(\mathcal O)} \le 1), ~K \subset \mathcal O,
\end{equation}
conditional on $Y$, where $\bar f=\bar f(Y)$ is the posterior mean (since $f|Y$ is bounded in $L^2(\mathcal O)$, the posterior mean $E^\Pi[f|Y]$ is well defined as a Bochner integral, see, e.g., p.68 in \cite{GN16}). As $\varepsilon \to 0$ the conditional laws of these stochastic processes will be shown to converge weakly to the law $\mathcal N_{f_0}$ of the Gaussian process $X$ from (\ref{marg2}), uniformly in the collection of functions $\{\psi:\|\psi\|_{C^\alpha_K(\mathcal O)} \le 1\}$, with $P_{f_0}^Y$-probability approaching one, whenever $\alpha>2+d/2$, and for any compact subset $K \subset \mathcal O$. Since both processes induce a linear action on $C^\alpha_K(\mathcal O)$ this is  equivalent to weak convergence in probability of the induced probability measures $\mathcal L(\varepsilon^{-1} (f-\bar f)|Y)$ to $\mathcal N_{f_0}$ on the  dual space $(C^\alpha_K(\mathcal O))^*= (C_K^\alpha(\mathcal O), \|\cdot\|_{C^\alpha(\mathcal O)})^*$.

To make these notions of convergence rigorous, recall that a sequence of probability measures $\mu_n$ on a metric space $(S,\rho)$ converges weakly to $\mu$, or $\mu_n \to^\mathcal L \mu$, if $\int_S F d\mu_n \to \int_S F d\mu$ for all bounded $\rho$-continuous functions $F: S \to \mathbb R$. Whenever $\mu$ is tight then this notion of convergence can be metrised by the bounded Lipschitz (BL) metric 
\begin{equation} \label{blmetric}
\beta_S(\nu,\mu)= \sup_{F: S \to \mathbb R, \|F\|_{Lip} \le 1} \left|\int_S F d(\mu-\nu) \right|,~~~\|F\|_{Lip} \equiv \sup_{x\in S}|F(x)| +\sup_{x\neq y, x,y \in S} \frac{|F(x)-F(y)|}{\rho(x,y)}
\end{equation}
see \cite{D14}, Theorem 3.28. It is therefore natural to say that a sequence of random probability measures converges weakly in probability to some tight limiting probability measure if their BL-metric distance converges to zero in probability. This definition is easily seen to be independent of the metric for weak convergence that is used. We refer to Section \ref{weakprob} for some basic facts about this notion of convergence.

\begin{theorem}\label{main}
Let prior $\Pi$ and $f_0$ satisfy Condition \ref{interior} for the given $s>0$, and let $\Pi(\cdot|Y)$ be the posterior distribution (\ref{posterior}) resulting from observing (\ref{model0}) with $g \in C^{s+2}(\mathcal O), g \ge g_{\min}>0$. Let $f \sim \Pi(\cdot|Y)$ conditional on $Y$, let $\bar f=\bar f(Y)=E^\Pi[f|Y]$ be the posterior mean, and let $P_{f_0}^Y$ denote the law of $Y=G(f_0)+ \varepsilon \mathbb W$. 

For every compact subset $K \subset \mathcal O$, any $\alpha>2+3d/2$ and as $\varepsilon \to 0$ we have
\begin{equation} \label{limit}
\beta_{(C^\alpha_K(\mathcal O))^*}(\mathcal L(\varepsilon^{-1} (f-\bar f)|Y), \mathcal N_{f_0}) \to^{P_{f_0}^Y} 0
\end{equation}
where $\mathcal L(\varepsilon^{-1} (f-\bar f)|Y)$ is the law on $(C^\alpha_K(\mathcal O))^*$ induced by the stochastic process (\ref{process}), and where $\mathcal N_{f_0}$ is the tight Gaussian Borel probability measure on $(C^\alpha_K(\mathcal O))^*$ constructed in Proposition \ref{gauss0}. In particular $\bar f$ is an efficient estimator of $f_0$: as $\varepsilon \to 0$ and under $P_{f_0}^Y$, $$\varepsilon^{-1}(\bar f -f_0) \to^\mathcal L \mathcal N_{f_0}~~\text{in } (C^\alpha_K(\mathcal O))^*.$$
\end{theorem}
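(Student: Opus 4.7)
The plan is to follow the template of Castillo--Nickl \cite{CN13,CN14}: prove the BvM in the dual Banach space $(C^\alpha_K(\mathcal O))^*$ by combining a finite-dimensional BvM for each linear test functional $\Psi(f)=\langle f,\psi\rangle_{L^2}$ with a uniformity/tightness argument that upgrades this to BL-convergence over the whole unit ball of $C^\alpha_K(\mathcal O)$. The first move is localisation: Theorem \ref{rates} confines the posterior to $\mathcal A_\varepsilon=\{f:\|f-f_0\|_{L^2}\le \rho_\varepsilon\}$ with $\rho_\varepsilon\to 0$ polynomially, and the a.s.\ bound $\|f\|_{C^s}\le C$ from (\ref{b01}) combined with interpolation yields rates for $\|\cdot\|_{(d)}$ and the weak norm $\|\cdot\|_{(H^2_0)^*}$. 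Multiplied by $\varepsilon^{-2}$ (the implicit scaling of the log-likelihood in Lemma \ref{nonlan}), the nonlinear remainder from Proposition \ref{diff} is then $o_{P_{f_0}^Y}(1)$ on $\mathcal A_\varepsilon$.

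\smallskip

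\emph{Finite-dimensional BvM.} Fix $\psi\in C^\infty_c(\mathcal O)$ with Riesz representer $\tilde\Psi=\tilde\Psi_{f_0}$ from (\ref{tildp}), so that $\langle \tilde\Psi,h\rangle_{LAN}=\langle \psi,h\rangle_{L^2}$ by (\ref{invop}). Parametrising $f=f_0+\varepsilon h$ and combining Lemma \ref{nonlan}, Proposition \ref{diff} and the localisation step shows that, on the localised set and up to multiplicative factors tending to one in probability, the posterior law of $h\mid Y$ has density proportional to $d\Pi(f_0+\varepsilon h)\exp\bigl\{\langle DG_{f_0}[h],\mathbb W\rangle_{L^2}-\tfrac12\|h\|_{LAN}^2\bigr\}$. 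Apply a Cameron--Martin-type shift $h\mapsto h-\tilde\Psi$: the quadratic factor produces $\langle DG_{f_0}[\tilde\Psi],\mathbb W\rangle_{L^2}-\tfrac12\|\tilde\Psi\|_{LAN}^2$, and by Condition \ref{interior} the wavelet coefficients of $f_0$ lie strictly inside the prior cube, so that the uniform-prior density ratio $d\Pi(f_0+\varepsilon(h-\tilde\Psi))/d\Pi(f_0+\varepsilon h)$ equals $1$ for all $\varepsilon$ small enough, uniformly over bounded sets of $h$. This yields that $\varepsilon^{-1}\langle f-f_0,\psi\rangle_{L^2}\mid Y$ converges in distribution to $N\bigl(\langle DG_{f_0}[\tilde\Psi],\mathbb W\rangle_{L^2},\,\|S_{f_0}[\psi/u_{f_0}]\|_{L^2}^2\bigr)$. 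Integrating identifies $\varepsilon^{-1}\langle \bar f - f_0,\psi\rangle_{L^2}$ with the centring $\langle DG_{f_0}[\tilde\Psi],\mathbb W\rangle_{L^2}$ in $P_{f_0}^Y$-probability, so that $\varepsilon^{-1}\langle f-\bar f,\psi\rangle_{L^2}\mid Y$ is asymptotically $N(0,\|S_{f_0}[\psi/u_{f_0}]\|_{L^2}^2)$, which matches the marginal of $\mathcal N_{f_0}$ at $\psi$ from (\ref{marg2}).

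\smallskip

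\emph{Tightness and BL-convergence.} To lift finite-dimensional convergence to BL-convergence in $(C^\alpha_K(\mathcal O))^*$, I would establish uniform tightness of $\{\mathcal L(\varepsilon^{-1}(f-\bar f)\mid Y)\}$ in $P_{f_0}^Y$-probability. Tightness of the limit $\mathcal N_{f_0}$ is given by Proposition \ref{gauss0}. For the posterior, (\ref{b01}) combined with the contraction rate of the localisation step and a Sobolev-duality argument gives a uniform bound $\|\varepsilon^{-1}(f-\bar f)\|_{(C^\beta_K)^*}=O_P(1)$ in posterior probability for a suitable $\beta<\alpha$; approximate the unit ball of $C^\alpha_K$ by a finite $\delta$-net through the compact embedding $C^\alpha_K\hookrightarrow C^\beta_K$, apply the finite-dimensional BvM element-wise, and control the net-approximation error uniformly using this bound. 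The stricter threshold $\alpha>2+3d/2$ (compared to the $2+d/2$ needed just for tightness of the limit) arises from the Sobolev losses incurred when controlling the nonlinear remainder $\|h\|_{(d)}\|h\|_{(H^2_0)^*}$ of Proposition \ref{diff} uniformly over $\psi$ in the unit ball: an extra $d$ derivatives are needed to trade $C^\alpha_K$-regularity against $H^2$-regularity with enough room to absorb the $\|\cdot\|_{(d)}$ factor. The efficiency claim $\varepsilon^{-1}(\bar f-f_0)\to^{\mathcal L}\mathcal N_{f_0}$ then follows by evaluating BL-convergence at each $\psi$ and using uniform integrability from the same tightness bound.

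\smallskip

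\emph{Main obstacle.} The principal difficulty is the shift argument inside the finite-dimensional BvM: unlike the exact Cameron--Martin density available for Gaussian series priors, the uniform prior (\ref{prior}) is not quasi-invariant under continuous translations. The density ratio $d\Pi(f_0+\varepsilon(h-\tilde\Psi))/d\Pi(f_0+\varepsilon h)$ must therefore be analysed by hand: it equals $1$ precisely when both arguments lie in $\supp(\Pi)$, and one must show that this event has posterior probability $1-o(1)$ uniformly in bounded $h$. This is where Condition \ref{interior} enters essentially, since the strict-interior assumption provides the room needed to absorb the $O(\varepsilon)$ wavelet perturbation produced by $\tilde\Psi$; without such an interior hypothesis a Gaussian approximation of the posterior cannot be expected (cf.~Remark \ref{lesson}).
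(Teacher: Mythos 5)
Your overall architecture (localisation, finite-dimensional BvM via a change of measure, tightness to upgrade to BL-convergence in the dual space) matches the paper's, but two steps as you describe them would fail.

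First, the shift argument. You parametrise $f=f_0+\varepsilon h$ and shift additively by $\tilde\Psi$, claiming the uniform-prior density ratio equals $1$ on an event of posterior probability $1-o(1)$, "where Condition \ref{interior} enters essentially". This does not work as stated. Condition \ref{interior} places $\varphi_0=\log f_0$ strictly inside the prior hyperrectangle, but it does nothing to keep a \emph{posterior draw} $\varphi$ away from the boundary: the localisation event $D_\varepsilon$ controls $\|\varphi-\varphi_0\|$ only in $L^2$, $(H^2_0)^*$ and $\sup$ norms, and at frequencies $l$ near $J$ these bounds are far weaker than the interval half-width $B2^{-l(s+d/2)}\bar l^{-2}$, so individual high-frequency coefficients of $\varphi$ can sit at the boundary with non-negligible posterior probability (the likelihood barely constrains them, and one cannot prove it does). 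A pure translation then pushes part of the posterior mass outside $\supp(\Pi)$, the translated prior is not absolutely continuous with respect to $\Pi$, and the ratio of integrals cannot be controlled. The paper's perturbation (\ref{tau!}) resolves exactly this: it is additive on $\varphi$ (not on $f$ -- note also that your additive shift of $f$ is not an additive shift of the log-parametrised prior coordinates, and the shift direction must be projected onto the finite-dimensional prior support $V_J$), and it contains the extra contraction $\delta_\varepsilon\Pi_{W_L}[\varphi_0-\varphi]$, which makes $\varphi+\tau$ a convex combination of $\varphi$ with an interior point and hence keeps the image inside $\supp(\Pi)$; the price is a density ratio $(1-\delta_\varepsilon)^{-\bar c 2^{Jd}}=1+o(1)$ and additional remainder terms in the LAN expansion that must be shown negligible. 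Condition \ref{interior} is used to show the \emph{target} of the contraction, $p(\tilde\psi)$, lies in the support (Lemma \ref{intpt}), not to control where posterior draws sit.

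Second, the tightness step. Your proposed uniform bound $\|\varepsilon^{-1}(f-\bar f)\|_{(C^\beta_K(\mathcal O))^*}=O_P(1)$ "from (\ref{b01}) combined with the contraction rate ... and a Sobolev-duality argument" is unavailable: the best contraction rate in the weakest norm used is $\|f-f_0\|_{(H^2_0)^*}\lesssim \varepsilon^{(2s+4)/(2s+4+d)}$ up to logs, which after division by $\varepsilon$ blows up like $\varepsilon^{-d/(2s+4+d)}$; no interpolation with the a.s.\ $C^s$ bound closes this gap. The paper instead derives tightness from the \emph{uniform} Laplace-transform expansion itself, applied to the rescaled wavelets $2^{-l(\alpha'+d/2)}\Phi^{\mathcal O}_{l,r}$ with $\alpha'=2+d/2+\gamma$, and sums the resulting sub-Gaussian moment bounds over the $N_l\simeq 2^{ld}$ wavelets per level; the requirement $\alpha>\alpha'+d$ from this cardinality count -- not a Sobolev loss in the nonlinear remainder of Proposition \ref{diff} -- is what produces the threshold $\alpha>2+3d/2$. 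Your finite-dimensional step and the identification of the limiting covariance are otherwise in the right spirit, but without the composite perturbation and the Laplace-transform-based tail control the proof does not go through.
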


\begin{remark}\label{lesson}
From the point of view of weak convergence towards $\mathcal N_{f_0}$, the condition $\alpha>2+d/2$ would be sufficient (and necessary) in the above theorem, see Proposition \ref{gauss0}. The stronger condition $\alpha>2+3d/2$ is, however, required in Theorem \ref{main} for the prior constructed in (\ref{prior}), and we wish to give some intuition for this fact: In statistical models with a  parameter space of fixed finite dimension, it is well known that the Bernstein-von Mises theorem does not hold when the true parameter lies at the boundary of the support of the prior. When proving Bernstein-von Mises theorems in high and infinite dimensions, this phenomenon becomes `quantitative' -- this was already observed in \cite{CN14, C14} where priors with `heavy tails' were required to obtain sub-Gaussian approximations of posterior distributions. In the non-linear inverse problem setting here, the prior (\ref{prior}) strongly regularises the potential $f$ to lie in a fixed ball of $C^s(\mathcal O)$. As a consequence the `stability estimate' induced by (\ref{inv}) holds globally in the support of the prior (see the proof of Lemma \ref{tailexp}), ensuring consistency of Bayesian inversion (Theorem \ref{rates}). This choice of prior, however, also limits the directions $\psi$ along which $\varepsilon^{-1} \langle f - \bar f, \psi \rangle_{L^2(\mathcal O)}|Y$ is approximately sub-Gaussian by requiring $\psi$ to be sufficiently well aligned with the support ellipsoid described by the prior (see the `change of measure' argument in the proof, particularly in the construction of admissible `interior' directions in Lemma \ref{intpt}). While $\alpha>2+3d/2$ appears necessary for the prior (\ref{prior}), it remains an intriguing open question whether the condition $\alpha>2+d/2$ can be attained by other priors. 
\end{remark}

\subsection{Some applications to uncertainty quantification} \label{uq}

A key consequence of a Bernstein-von Mises result is that the uncertainty quantification provided by the posterior distribution for the parameter $f$ is optimal in a frequentist, prior-independent, sense, if the credible sets are constructed for the norm in which the normal approximation to the posterior holds. 

\smallskip

To start with a simple example, suppose $\psi \in C_c^\alpha(\mathcal O), \alpha>2+3d/2,$ is a fixed `test' function ($\psi\neq 0$ to avoid triviality) and our target of statistical inference is the scalar quantity $\Psi(f) = \langle f, \psi \rangle_{L^2}$. We can use the induced posterior distribution $\Pi_\psi(\cdot|Y) \equiv \Pi(\cdot|Y) \circ \Psi^{-1}$ to construct a prescribed level $1-\beta$ credible set for $\Psi(f)$ by taking
\begin{equation}\label{codi}
C_\varepsilon = \{z \in \mathbb R: |z - \langle \bar f, \psi \rangle_{L^2}| \le R_\varepsilon\},~\text{with } R_\varepsilon=R_\varepsilon(Y, \beta)~s.t.~ \Pi_\psi(C_\varepsilon|Y)=1-\beta, ~~\beta>0.
\end{equation}
The Bernstein-von Mises and continuous mapping theorems now imply that the law of $\varepsilon^{-1}(\Psi(f)-\Psi(\bar f))$ is approximated in the small noise limit by a $N(0,\|S_{f_0}(\psi/u_{f_0})\|_{L^2}^2)$-distribution. Let $\Phi(t)=\Pr(Z \in [-t,t]), t >0,$ for $Z \sim N(0,\|S_{f_0}(\psi/u_{f_0})\|_{L^2}^2)$ with continuous inverse $\Phi^{-1}:[0,1] \to \mathbb R$. The following corollary implies that the posterior credible set $C_\varepsilon$ is an efficient frequentist confidence set for the parameter $\Psi(f)$. 
\begin{corollary} \label{spconf}
The frequentist coverage probability of $C_\varepsilon$ satisfies $$P_{f_0}^Y(\Psi(f_0) \in C_\varepsilon) \to 1-\beta,$$ as $\varepsilon \to 0$ and the diameter of $C_\varepsilon$ is of order $R_\varepsilon = O_P(\varepsilon)$, in fact as $\varepsilon \to 0$, $$\varepsilon^{-1} R_\varepsilon \to^{P_{f_0}^Y} \Phi^{-1}(1-\beta).$$
\end{corollary}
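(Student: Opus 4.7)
The plan is to deduce the corollary by pushing Theorem \ref{main} forward under the continuous linear functional $\mu \mapsto \mu(\psi)$ and then combining quantile convergence with a Slutsky-type argument.

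First I would fix a compact $K \subset \mathcal O$ containing $\supp(\psi)$ so that $\psi$ (extended by $0$) lies in $C^\alpha_K(\mathcal O)$ and the evaluation functional $T_\psi: (C^\alpha_K(\mathcal O))^* \to \mathbb R$, $T_\psi(\mu) = \mu(\psi)$, is Lipschitz with constant $\|\psi\|_{C^\alpha(\mathcal O)}$. Theorem \ref{main} asserts that $\mathcal L(\varepsilon^{-1}(f-\bar f)|Y)$ converges weakly in $P_{f_0}^Y$-probability to $\mathcal N_{f_0}$ on $(C^\alpha_K(\mathcal O))^*$, and by the continuous mapping theorem for weak convergence in probability (which follows at once from the definition \eqref{blmetric} applied to $F \circ T_\psi$, whose Lipschitz constant is controlled by $\|\psi\|_{C^\alpha(\mathcal O)}$), the induced one-dimensional conditional law $\Pi_\psi(\cdot - \langle \bar f, \psi \rangle_{L^2}|Y)$ of $\varepsilon^{-1}(\Psi(f) - \Psi(\bar f))$ converges weakly in probability to the law of $Z \sim N(0, \|S_{f_0}[\psi/u_{f_0}]\|_{L^2}^2)$; note $Z$ is non-degenerate since $\psi \neq 0$ implies $S_{f_0}[\psi/u_{f_0}] \neq 0$ because $S_{f_0}$ is injective on $C_0(\mathcal O)$ (Proposition \ref{inverse}).

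Next I would transfer this to quantile convergence. Denote by $F_\varepsilon(t) = \Pi_\psi(|z - \langle \bar f, \psi\rangle_{L^2}| \le \varepsilon t|Y)$ the conditional c.d.f.\ of $\varepsilon^{-1}|\Psi(f) - \Psi(\bar f)|$ under the posterior, and by $F(t) = \Phi(t)$ the limiting c.d.f. Weak convergence of the scaled posterior in probability implies $F_\varepsilon(t) \to^{P_{f_0}^Y} F(t)$ at every continuity point $t$ of $F$, and since $F$ is continuous and strictly increasing on $(0,\infty)$ a standard argument (e.g., Lemma 21.2 in \cite{vdV98}, applied along subsequences converging almost surely) yields the inverse convergence $\varepsilon^{-1} R_\varepsilon = F_\varepsilon^{-1}(1-\beta) \to^{P_{f_0}^Y} F^{-1}(1-\beta) = \Phi^{-1}(1-\beta)$, which is the diameter statement.

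Finally I would prove the coverage statement. Writing $P_{f_0}^Y(\Psi(f_0) \in C_\varepsilon) = P_{f_0}^Y(|\varepsilon^{-1}(\Psi(\bar f) - \Psi(f_0))| \le \varepsilon^{-1} R_\varepsilon)$, I apply Theorem \ref{main} once more to the frequentist statement $\varepsilon^{-1}(\bar f - f_0) \to^\mathcal L \mathcal N_{f_0}$ in $(C^\alpha_K(\mathcal O))^*$ under $P_{f_0}^Y$, whence continuous mapping gives $\varepsilon^{-1}(\Psi(\bar f) - \Psi(f_0)) \to^\mathcal L Z$. Combined with $\varepsilon^{-1} R_\varepsilon \to^{P_{f_0}^Y} \Phi^{-1}(1-\beta)$ (a constant limit), Slutsky's lemma yields
\[
P_{f_0}^Y(\Psi(f_0) \in C_\varepsilon) \to \Pr\bigl(|Z| \le \Phi^{-1}(1-\beta)\bigr) = \Phi\bigl(\Phi^{-1}(1-\beta)\bigr) = 1-\beta,
\]
completing the proof. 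The only mildly subtle point is the quantile-convergence step, which must be performed carefully because $F_\varepsilon$ is random; passing to almost-sure subsequences and using continuity and strict monotonicity of $\Phi$ is the cleanest route. Everything else is a direct application of Theorem \ref{main} plus continuous mapping and Slutsky.
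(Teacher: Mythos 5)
Your proposal is correct and follows essentially the same route as the paper: the paper derives the corollary from Theorem \ref{main} via the continuous mapping theorem for the functional $\mu \mapsto \mu(\psi)$, quantile convergence from strict monotonicity and continuity of $\Phi$ (arguing along almost-surely convergent subsequences, cf.\ Section \ref{weakprob}), and the efficiency of $\bar f$ plus Slutsky for coverage — the only difference being that the paper outsources these steps to Theorem 7.3.23 in \cite{GN16} while you write them out explicitly for the scalar case.
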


\smallskip

The above result immediately generalises to fixed non-linear but differentiable functionals $\Psi$ of $f$, arguing just as in Section 2.3.2 in \cite{CN13}.

\smallskip

For the proof of the previous corollary one would not require the full strength of Theorem \ref{main}. But the infinite-dimensional approximation is required to demonstrate that also the uncertainty quantification provided by the posterior distribution for the \textit{entire} parameter $f$  --  by intersecting all admissible linear constraints in (\ref{codi}) for $\psi$ in the unit ball of $C^\alpha_K(\mathcal O)$ -- can be valid. More precisely, if we choose posterior quantiles $R_\varepsilon=R_\varepsilon(Y, \beta)$ such that 
\begin{equation} \label{pol}
C_\varepsilon=\{f \in \supp \Pi(\cdot|Y): \|f-\bar f\|_{(C^\alpha_K(\mathcal O))^*} \le R_\varepsilon\},~~ \Pi(C_\varepsilon|Y)=1-\beta,
\end{equation} 
where $\beta>0$ is some fixed significance level, and $\alpha>2+3d/2$ is arbitrary, then one can prove the following result.

\begin{corollary} \label{npconf}
The frequentist coverage probability of $C_\varepsilon$ from (\ref{pol}) satisfies $$P_{f_0}^Y(f_0 \in C_\varepsilon) \to 1-\beta,$$ and as $\varepsilon \to 0$ the quantile constants satisfy $$\varepsilon^{-1} R_\varepsilon \to^{P_{f_0}^Y} const.$$
\end{corollary}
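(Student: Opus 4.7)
The plan is to deduce Corollary~\ref{npconf} from Theorem~\ref{main} by combining the continuous mapping theorem with a quantile-convergence argument, using that the norm of a non-degenerate centred Gaussian Borel measure on a separable Banach space has a continuous distribution function.

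First I would apply Theorem~\ref{main} with $\alpha>2+3d/2$ to obtain $\mathcal L(\varepsilon^{-1}(f-\bar f)\mid Y) \to \mathcal N_{f_0}$ weakly in probability on $(C^\alpha_K(\mathcal O))^*$. The map $x\mapsto \|x\|_{(C^\alpha_K(\mathcal O))^*}$ is $1$-Lipschitz, so the continuous mapping theorem for the bounded Lipschitz metric (Section~\ref{weakprob}) transfers this to weak convergence in probability on $[0,\infty)$ of the conditional law of $N_\varepsilon:=\varepsilon^{-1}\|f-\bar f\|_{(C^\alpha_K(\mathcal O))^*}$ given $Y$, to the law $\mu$ of $N:=\|X\|_{(C^\alpha_K(\mathcal O))^*}$ with $X\sim\mathcal N_{f_0}$.

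Second, I claim that the CDF $F$ of $\mu$ is continuous and strictly increasing on $(0,\infty)$. By Proposition~\ref{gauss0}, $\mathcal N_{f_0}$ is a tight centred Gaussian Borel probability measure on the separable Banach space $(C^\alpha_K(\mathcal O))^*$. It is non-degenerate because $\mathrm{Var}\, X(\psi)=\|S_{f_0}[\psi/u_{f_0}]\|_{L^2}^2>0$ whenever $\psi\neq 0$, by invertibility of the Schrödinger operator $S_{f_0}$ (Proposition~\ref{inverse}) and positivity of $u_{f_0}$. Standard Gaussian anti-concentration (Borell--Sudakov--Tsirelson isoperimetry, or Tsirelson's theorem that the distribution of the norm of a non-degenerate Gaussian has a density away from zero) then gives the claimed continuity and strict monotonicity of $F$.

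Third, I would convert the weak-convergence-in-probability statement into convergence in probability of the posterior quantiles. Writing $F_\varepsilon(t):=\Pi(N_\varepsilon\le t\mid Y)$ (a random CDF, measurable in $Y$), weak convergence in probability plus continuity of $F$ yields $F_\varepsilon(t)\to^{P_{f_0}^Y} F(t)$ for every $t>0$. A short pinching argument around $r_{1-\beta}:=F^{-1}(1-\beta)$: for any $\delta>0$ the values $F(r_{1-\beta}\pm\delta)$ strictly bracket $1-\beta$, and by pointwise convergence the same holds for $F_\varepsilon$ with $P_{f_0}^Y$-probability tending to one. Hence by monotonicity $F_\varepsilon^{-1}(1-\beta)\in[r_{1-\beta}-\delta,r_{1-\beta}+\delta]$ eventually, giving $\varepsilon^{-1}R_\varepsilon=F_\varepsilon^{-1}(1-\beta)\to^{P_{f_0}^Y} r_{1-\beta}$, the second assertion of the corollary.

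Finally, for coverage I use the ``in particular'' part of Theorem~\ref{main}: $\varepsilon^{-1}(\bar f - f_0)\to^{\mathcal L}\mathcal N_{f_0}$ under $P_{f_0}^Y$ in $(C^\alpha_K(\mathcal O))^*$. The continuous mapping theorem for ordinary weak convergence yields $\varepsilon^{-1}\|\bar f-f_0\|_{(C^\alpha_K(\mathcal O))^*}\to^{\mathcal L} N$. Combining with $\varepsilon^{-1}R_\varepsilon\to^{P_{f_0}^Y} r_{1-\beta}$ via Slutsky's theorem, and invoking continuity of $F$ at $r_{1-\beta}$ (so that $\{(x,y):x\le y\}$ is a $\mu\otimes\delta_{r_{1-\beta}}$-continuity set),
\[
P_{f_0}^Y(f_0\in C_\varepsilon)=P_{f_0}^Y\bigl(\varepsilon^{-1}\|\bar f-f_0\|_{(C^\alpha_K(\mathcal O))^*}\le \varepsilon^{-1}R_\varepsilon\bigr)\to F(r_{1-\beta})=1-\beta.
\]
The main obstacle is the interplay in the third step between the randomness of the posterior CDFs $F_\varepsilon$ and the notion of weak convergence in probability; beyond this, one must be confident that the Gaussian anti-concentration needed for continuity of $F$ is available for the tight but potentially infinite-dimensional limit $\mathcal N_{f_0}$, which is precisely the role of the non-degeneracy ensured by Proposition~\ref{inverse}.
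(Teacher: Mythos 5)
Your proposal is correct and takes essentially the same route as the paper, which simply defers to Theorem 7.3.23 in \cite{GN16} after verifying the same two key ingredients: that weak convergence in probability of the posterior in the separable space $(C^\alpha_K(\mathcal O))^*$ can be transferred to the norm and its quantiles, and that the limit CDF $\Phi(t)=\mathcal N_{f_0}(x:\|x\|_{(C^\alpha_K(\mathcal O))^*}\le t)$ is continuous and strictly increasing. The only cosmetic difference is that the paper establishes strict monotonicity by exhibiting RKHS elements (images of $L^2$ under $S_{f_0}$) in every norm-shell, whereas you invoke general Gaussian anti-concentration plus non-degeneracy of the covariance; both are valid.
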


Although the credible ball (\ref{pol}) is constructed in the weaker topology of $(C^{\alpha}_K(\mathcal O))^*$, its diameter can be shown to converge to zero in probability at rate $\varepsilon^{2(s-d-\kappa)/(2s+4+d)},$ for every $\kappa>0$, also in the much stronger $\|f\|_{L^1(\bar K)} = \int_{\bar K} |f|$ ~-norm (where the Bernstein-von Mises theorem does not hold), for any strict compact subset $\bar K \subsetneq K$. This can be proved by noting that the $L^1(\bar K)$-norm can be bounded via interpolation between the bounded support of $\Pi(\cdot|Y)$ in $C^s(\mathcal O)$ and the convergence rate in $(C^{\alpha}_K(\mathcal O))^*$. See \cite{CN13, CN14} and Section 5 in \cite{R17} for more discussion of this `multi-scale' phenomenon.

\section{Proofs}

\subsection{Proofs of Propositions \ref{gauss0} and \ref{sobrates}} \label{fsp}

To prove Proposition \ref{gauss0}A), let $X \sim \mathcal N_{f_0}$ with covariance metric $d(\psi, \psi') = \|S_{f_0}[(\psi-\psi')/u_{f_0}]\|_{L^2} \lesssim \|\psi-\psi'\|_{C^2}$. The metric entropy estimate 
\begin{equation} \label{meteor}
\log N(\{\psi: \|\psi\|_{C^\alpha_K(\mathcal O)} \le 1, \eta, d) \lesssim \log N(\{\psi: \|\psi\|_{C^{\alpha-2}_K(\mathcal O)} \le 1, c\eta, \|\cdot\|_\infty), \eta>0,
\end{equation} 
for some constant $c>0$, combined with (\ref{vdv}), $\alpha>2+d/2$, and Proposition 2.1.5 and Theorem 2.3.7 in \cite{GN16} imply that $X$ defines a tight Gaussian Borel random variable in the space of bounded and uniformly $d$-continuous functions on the unit ball of $C^\alpha_K(\mathcal O)$. That a version of $X$ exists that acts linearly on $C^\alpha_K(\mathcal O)$, thus defining an element of $(C^\alpha_K(\mathcal O))^*$, follows from the proof of Theorem 3.7.28 in \cite{GN16} and linearity in $\psi$ of the covariance in (\ref{marg2}).

\smallskip

We now prove Proposition \ref{gauss0}B) for $(C^\alpha_c(\mathcal O))^*$, the case $(C^\alpha_K(\mathcal O))^*$ is proved in the same way by replacing $\mathcal O$ with a ball contained in $K$. By the continuous imbeddings $C^{\alpha'}_c \subset C^{\alpha}_c, \alpha'>\alpha,$ it suffices to prove the result for $\alpha =2+d/2-\epsilon$ for every $\epsilon>0$ small enough. Suppose to the contrary that $$\mathcal N_{f_0}(x:\|x\|_{(C^\alpha_c(\mathcal O))^*}<\infty) = \Pr \Big(\sup_{h \in C^\alpha_c(\mathcal O): \|h\|_{C^\alpha}\le 1} |X(h)| <\infty \Big)>0,$$ then by separability of the unit ball of $C^\alpha_c(\mathcal O), \alpha >2,$ for the covariance metric $d$ of $\mathcal N_{f_0}$ (cf.~(\ref{meteor}) and (\ref{vdv})), and by Proposition 2.1.12 and Theorem 2.1.20 in \cite{GN16}, $X \sim \mathcal N_{f_0}$ must satisfy $E\|X\|_{(C^\alpha_c(\mathcal O))^*}<\infty$, which we will now lead to a contradiction.  If $V_{f_0}$ denotes the inverse Schr\"odinger operator from Section \ref{sop}, and if $X \sim \mathcal N_{f_0}$ then by Proposition \ref{inverse} the Gaussian process $\tilde X $ defined by the action $$\big(\tilde X(\phi) =X(u_{f_0} V_{f_0}[\phi]),~~ \phi \in C^\infty_c(\mathcal O)\big),$$ has the law of a standard Gaussian white noise $\mathbb W$. Using Proposition \ref{inverse} again
\begin{align*}
&E \sup_{h \in C^\alpha_c(\mathcal O): \|h\|_{C^\alpha}\le 1} |X(h)| \\
&= E \sup_{h \in C^\alpha_c(\mathcal O): \|h\|_{C^\alpha}\le 1} |X(u_{f_0} V_{f_0}[S_{f_0}(h/u_{f_0})]) | \\
& = E \sup_{h \in C^\alpha_c(\mathcal O): \|h\|_{C^\alpha}\le 1} |\tilde X (S_{f_0}(h/u_{f_0}))| \\
&= E \sup_{h \in C^\alpha_c(\mathcal O): \|h\|_{C^\alpha}\le 1} |\mathbb W(S_{f_0}(h/u_{f_0}))| \\
& \ge E \sup_{h \in C^\alpha_c(\mathcal O): \|h\|_{C^\alpha}\le 1} |\langle \mathbb W, \Delta(h/2u_{f_0}) \rangle_{L^2}|  - E \sup_{h \in C^\alpha_c(\mathcal O): \|h\|_{C^\alpha}\le 1} |\langle \mathbb W, (f_0h)/u_{f_0}) \rangle_{L^2}| \\
& \ge E \sup_{h \in C^\alpha_c(\mathcal O): \|h\|_{C^\alpha}\le 1} |\langle \mathbb W, \Delta(h/2u_{f_0}) \rangle_{L^2}| -C \\
& \ge (1/\kappa) E \sup_{\bar h \in C^\alpha_c(\mathcal O): \|\bar h\|_{C^\alpha}\le 1} |\langle \mathbb W, \Delta \bar h \rangle_{L^2}| -C
\end{align*}
for some $\kappa>0$, where we have used $f_0, u_{f_0}, 1/u_{f_0} \in C^{\alpha}(\mathcal O)$ under the maintained assumptions (Section \ref{sop}), that the supremum of the standard white noise process $\mathbb W$  on balls in $C_c^{\alpha}(\mathcal O)$ ($\alpha=2+d/2-\epsilon$, $\epsilon<2$) is bounded in expectation by a fixed constant (use Proposition \ref{pain}C and (\ref{dudmet1})), and where we have taken $h=2\bar h u_{f_0}/\|\bar h u_{f_0}\|_{C^\alpha}$ and used $2\|\bar h u_{f_0}\|_{C^\alpha} \le \kappa$ by (\ref{mult}). 

We complete the proof by showing
\begin{equation} \label{gplbd}
E \sup_{h \in C^\alpha_c(\mathcal O): \|h\|_{C^\alpha}\le 1} |\langle \mathbb W, \Delta h\rangle_{L^2(\mathcal O)}|=\infty,
\end{equation}
as follows: For every $j \in \mathbb N$ there exists a small positive constant $c_0'$ and $c_0'2^{jd}=n_j$ many Daubechies wavelets $(\Phi_{j,r}: r=1, \dots, n_j)$ that have disjoint compact support within $\mathcal O$ (Section \ref{fsp0}). Let $b_{m,\cdot}$ be a point in the discrete hypercube $\{-1,1\}^{n_j}$ and, for $\kappa'$ small enough chosen below, define functions 
\begin{equation} \label{lfp}
h_m (x) \equiv h_{m,j}(x) = \kappa' \sum_{r=1}^{n_j} b_{m,r} 2^{-j(\alpha+d/2)} \Phi_{j,r}(x), x \in \mathcal O,~m=1, \dots, 2^{n_j}.
\end{equation}
The interior wavelets $\Phi_{j,r}$ are all orthogonal to the boundary wavelets, thus
$$\|h_m\|_{\mathcal C^{\alpha,W}} = \sup_{l,r} 2^{l(\alpha+d/2)} |\langle \Phi_{l,r}^\mathcal O, h_m \rangle_{L^2(\mathcal O)}| = \kappa',$$ and since by after (\ref{holdwav}) the wavelet norm $\|\cdot\|_{\mathcal C^{\alpha,W}}$ is equivalent to $\|\cdot\|_{C^\alpha}$ on such $(h_m)$'s (we can assume $\alpha \notin \mathbb N$ by choice of $\epsilon$), the $h_m$'s are all contained in $\{h \in C^\alpha_c(\mathcal O): \|h\|_{C^\alpha}\le 1\}$ for $\kappa'$ small enough. Conclude that the supremum in (\ref{gplbd}) is lower bounded by $$E \max_m |\langle \mathbb W, \Delta h_m\rangle_{L^2(\mathcal O)}|.$$ The Gaussian process $(\mathbb W(h_m): m=1, \dots, 2^{n_j})$ has covariance metric $$d^2(h_m, h_m') = \|\Delta(h_m-h_m')\|^2_{L^2(\mathcal O)},$$ and by the Varshamov-Gilbert bound (Example 3.1.4 in \cite{GN16}) there exists $\{b_{m,\cdot}: m=1, \dots, M_j\} \subset \{-1,1\}^{n_j}$ with $M_j \ge 3^{n_j/4}$ that are $n_j/8$ -separated for the Hamming-distance. Then for the $h_m$'s corresponding to these separated $b_m$'s we have 
\begin{equation} \label{separat}
d^2(h_m, h_{m'}) = (\kappa')^2 2^{-2j(\alpha+d/2)} \left\|\sum_r (b_{m,r} - b_{m',r}) \Delta \Phi_{j,r}  \right\|_{L^2(\mathcal O)}^2.
\end{equation}
The $\Delta \Phi_{j,r}$ all have disjoint support and thus, when normalised by $\|\Delta \Phi_{j,r}\|_{L^2} = 2^{2j} \|\Delta \Phi_{0,0}\|_{L^2}$, form an orthonormal system in $L^2(\mathcal O)$. Thus the last term equals, by Parseval's identity
$$ d^2(h_m, h_{m'}) = (\kappa')^2 2^{2j(2-\alpha-d/2)} \sum_r (b_{m,r}-b_{m',r})^2 \ge c 2^{2j(2-\alpha-d/2)} n_j \ge (c')^2 2^{2j(2-\alpha)},$$ for $m \neq m'$ in the separated set. By Sudakov's lower bound (Theorem 2.4.12 in \cite{GN16} with $\varepsilon$ there chosen as $\varepsilon= c'2^{j(2-\alpha)}$) we then have
$$E \max_m |\langle \mathbb W, \Delta h_m\rangle| \ge c'' 2^{j(2-\alpha)} \sqrt {\log M_j}  \ge c''' 2^{j(2+d/2-\alpha)}$$ which for $\alpha < 2+d/2$ can be made as large as desired as $j \to \infty$, completing the proof.

\begin{remark}\label{sudoku}
A slightly more involved version of the above argument also shows that $\mathcal N_{f_0}$ cannot be tight in $(C^{\alpha}_c(\mathcal O))^*$ in the boundary case $\alpha=2+d/2$, as this would force the Gaussian process $X$ indexed by the unit ball in $C^{\alpha}_c(\mathcal O)$ to be sample continuous for its covariance metric (arguing as in Proposition 2.1.7 in \cite{GN16}). The above Sudakov' lower bound argument can then be refined (using Corollary 2.4.14 in \cite{GN16}) to also cover $\alpha=2+d/2$.
\end{remark}

We finally turn to the proof of Proposition \ref{sobrates}: The upper bound follows from Lemma \ref{tailexp} with $\eta \simeq \varepsilon^{(2s+4)/(2s+4+d)}$ when $\alpha=0$ -- since one may take $v \gtrsim  \bar D^2$ in that Lemma we can integrate tail probabilities to also bound the expected risk. The case of general $\alpha$ then follows from (\ref{lionsm}) below. For the lower bound we apply the general Theorem 6.3.2 in \cite{GN16} and arguments akin to those used in the proof of Proposition \ref{gauss0}B) just given. We assume that $g=1$  on $\partial \mathcal O$ and that $B$ is large and $f_{\min}$ small enough (chosen below), the general result requires only minor modifications. Let $\bar g$ be a smooth function such that $\bar g \ge 1$ on $\bar {\mathcal O}$ and let $g_0$ be the smooth solution to the Poisson equation $\Delta g_0 = \bar g$ on $\mathcal O$ with Dirichlet boundary conditions $g_0 = 1$ on $\partial \mathcal O$ (a unique such solution exists by Theorem 6.14 in \cite{GT98}). Since $\bar g \ge 1$ on $\mathcal O$ and $g \ge 1$ on $\partial \mathcal O$ the maximum principle for harmonic functions (Theorem 2.3 in \cite{GT98}) implies that also $g_0 \ge 1$ on $\bar {\mathcal O}$. Now define functions $$u_0=g_0, u_m = g_0 + h_m, m=1, \dots, M_j,$$ where $h_m$ is as in (\ref{lfp}) but with $\alpha$ there replaced by $s+2$, and $M_j$ as before (\ref{separat}). Arguing as after (\ref{lfp}), the $u_m$ are contained in a ball of $C^{s+2}(\mathcal O)$ of radius $\|g_0\|_{C^{s+2}}+c\kappa', c>0,$ and for $\kappa'$ small enough both $u_m$ and $\Delta u_m = \bar g + \Delta h_m$ are positive and bounded away from zero on $\bar {\mathcal O}$. Setting $2^{j} \simeq \varepsilon^{-2/(2s+4+d)}$ and since the KL-divergence in (\ref{model0}) equals $\varepsilon^{-2}$ times the squared $L^2$-distance (eq.(6.166) in \cite{GN16}), given $\epsilon>0$ we can choose $\kappa'$ small enough so that
$$\varepsilon^{-2}\|u_0-u_m\|_{L^2}^2 \le  (\kappa')^2 \varepsilon^{-2} \varepsilon^{2(2s+4+d)/(2s+4+d)} \sum_{r \le n_j}1 \le \epsilon \log M_j,$$ verifying eq.(6.103) in \cite{GN16}. Observe next that the 
$f_m = \frac{1}{2} \frac{\Delta u_m}{u_m}$ are all greater than some $f_{\min}>0$, lie in a fixed ball of $C^s(\mathcal O)$ of radius $B$, and solve the Schr\"odinger equation $\Delta u_m/2 - f_m u_m = 0 $ subject to $u_m=g$ on $\partial \mathcal O$. Now the result follows since, adapting the argument after (\ref{separat}) to the present choice of $s$, the $f_m$ are $H^\alpha(\mathcal O)$-separated by at least
$$\|f_m-f_m'\|_{H^\alpha} \ge c\|\Delta (u_m- u_m')\|_{H^\alpha} - C \|u_m^{-1}-u^{-1}_m\|_{H^\alpha} \ge c2^{-j(s-\alpha)} \simeq \varepsilon^{2(s-\alpha)/(2s+4+d)},$$ verifying the hypotheses of Theorem 6.3.2 in \cite{GN16}.

\subsection{Proof of Theorem \ref{rates}}\label{cont}

We start with a general contraction theorem that applies to priors that are supported on a fixed $s$-H\"older ball. Recall (\ref{h0norm})  for the definition of the $(H^2_0)^*$ norm. We apply the general approach of \cite{GGV00} to Bayesian contraction theorems (via Theorem \ref{ggv}), but, following \cite{GN11}, use frequentist estimators to construct the relevant `test functions'.

\begin{theorem}\label{meta}
Let $\Pi=\Pi_\varepsilon$ be a Borel probability measure supported on a measurable subset $\mathcal F$ of $L^2(\mathcal O)$ that satisfies $$ \mathcal F \subset  \Big \{\inf_{x \in \mathcal O}f(x) \ge f_{\min}, \|f\|_{C^{s}(\mathcal O)} \le D\Big\},~~D>0, f_{\min}>0, s>2+d/2, s \in \mathbb N.$$  Let $\Pi(\cdot|Y)$ be the resulting posterior distribution (\ref{posterior}) arising from observing (\ref{model0}) with $g \in C^{s+2}(\bar{\mathcal O}), g \ge g_{\min}>0$. Let $P_{f_0}^Y$ be the law generating $Y=u_{f_0}+\varepsilon \mathbb W$ for fixed $f_0 \in \mathcal F$.

Let $\eta=\eta_\varepsilon$ be a sequence satisfying
$$\eta_\varepsilon \ge \bar c \varepsilon^{(2s+4)/(2s+4+d)} $$ and 
\begin{equation} \label{sba}
\Pi(f: \|f-f_0\|_{(H_0^2)^*} < \eta_\varepsilon) \ge e^{-C(\eta_\varepsilon/\varepsilon)^2}
\end{equation}
for some constants $\bar c, C>0$ and all $\varepsilon$ small enough. Then there exists a finite constant $M$ depending only on $\bar c, C, D, d, s, \mathcal O, \|g\|_{C^{s+2}(\mathcal O)}$ such that we have as $\varepsilon \to 0$ $$\Pi(f: \|f-f_0\|_{L^2} > M \eta^{s/(s+2)}_\varepsilon|Y) \to^{P_{f_0}^Y} 0$$ and $$\Pi(f: \|f-f_0\|_{(H_0^2)^*} > M  \eta_\varepsilon|Y) \to^{P_{f_0}^Y} 0.$$ 
\end{theorem}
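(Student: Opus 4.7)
The plan is to apply a Ghosal--Ghosh--van der Vaart style contraction theorem (namely Theorem \ref{ggv}) in the natural statistical metric of the Gaussian regression model (\ref{model0}), that is in the forward-map distance $\|u_f - u_{f_0}\|_{L^2(\mathcal O)}$, and then to translate the resulting contraction into the $(H^2_0)^*$-metric via a global stability estimate on the prior support and, subsequently, into the $L^2$-metric by Sobolev interpolation exploiting the $C^s$-bound built into $\mathcal F$.

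The first thing I would establish is the uniform two-sided equivalence
\[
\|u_f - u_{f_0}\|_{L^2(\mathcal O)} \simeq \|f - f_0\|_{(H^2_0)^*},\quad f \in \mathcal F,
\]
on which the whole proof rests. The upper bound is immediate from Proposition \ref{diff}: writing $u_f - u_{f_0} = DG_{f_0}[f-f_0] + R$ with $\|R\|_{L^2}\lesssim \|f-f_0\|_{(d)}\|f-f_0\|_{(H^2_0)^*}$, and noting that $\|f-f_0\|_{(d)}$ is bounded uniformly on $\mathcal F$ by the $C^s$-ball constraint and Sobolev embedding, one obtains $\|u_f - u_{f_0}\|_{L^2} \lesssim \|f-f_0\|_{(H^2_0)^*}$. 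For the lower bound I would use the PDE identity
\[
f - f_0 = \frac{u_{f_0}\,\Delta(u_f - u_{f_0}) - (u_f - u_{f_0})\,\Delta u_{f_0}}{2\, u_f\, u_{f_0}},
\]
valid on $\mathcal O$ since $u_f, u_{f_0}\ge c>0$ by (\ref{jensen}) and Proposition \ref{sbaest}. Pairing against $\varphi \in C_0(\mathcal O)$ with $\|\varphi\|_{H^2}\le 1$ and integrating the first Laplacian onto $\varphi/u_f$ twice (both boundary terms vanish since $\varphi$ and $u_f-u_{f_0}$ vanish on $\partial \mathcal O$), together with the uniform $C^2$-bounds on $1/u_f, 1/u_{f_0}$ and $\Delta u_{f_0}/(u_f u_{f_0})$ and the multiplicative inequality (\ref{mult}), yields $|\langle f-f_0,\varphi\rangle_{L^2}|\lesssim \|u_f - u_{f_0}\|_{L^2}\|\varphi\|_{H^2}$, hence the claim after taking the supremum over $\varphi$.

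Given this equivalence, I would verify the three hypotheses of Theorem \ref{ggv}. The Kullback--Leibler small ball condition is immediate because in (\ref{model0}) the KL divergence equals $\|u_f - u_{f_0}\|_{L^2}^2/(2\varepsilon^2)$, so (\ref{sba}) combined with the equivalence yields $\Pi(\{f:\KL(P_{f_0}^Y, P_f^Y) \le c(\eta_\varepsilon/\varepsilon)^2\}) \ge e^{-C'(\eta_\varepsilon/\varepsilon)^2}$. For the entropy/sieve condition I take $\mathcal F$ itself as sieve: since $\mathcal F$ sits in a $C^s$-ball and $C^s\hookrightarrow H^s$, standard metric-entropy estimates for $H^s$-balls in $H^{-2}$ give $\log N(\eta, \mathcal F, (H^2_0)^*) \lesssim \eta^{-d/(s+2)}$, and the lower bound $\eta_\varepsilon \gtrsim \varepsilon^{(2s+4)/(2s+4+d)}$ is exactly what is needed so that $\log N(c\eta_\varepsilon, \mathcal F, (H^2_0)^*) \lesssim (\eta_\varepsilon/\varepsilon)^2$. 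For the tests I would follow the approach of \cite{GN11}: on an $M\eta_\varepsilon$-net of the complement $\mathcal F \cap \{f:\|f-f_0\|_{(H^2_0)^*}> M\eta_\varepsilon\}$, the Gaussian likelihood-ratio tests have both error probabilities of order $\exp(-c\|u_{f_j}-u_{f_0}\|_{L^2}^2/\varepsilon^2)\lesssim \exp(-c(M\eta_\varepsilon/\varepsilon)^2)$ by the lower bound in the equivalence, and combined with the entropy bound a union bound yields global tests with the required exponential separation for $M$ large enough.

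Theorem \ref{ggv} then delivers $\Pi(\|u_f - u_{f_0}\|_{L^2}>M\eta_\varepsilon|Y)\to^{P_{f_0}^Y}0$, which by the equivalence is the second assertion of the theorem. For the first, I would observe that $f-f_0$ lies in a fixed $H^s(\mathcal O)$-ball on the support of $\Pi$ and apply the Lions--Magenes interpolation inequality (cf.\ (\ref{lionsm}))
\[
\|f-f_0\|_{L^2(\mathcal O)} \lesssim \|f-f_0\|_{(H^2_0)^*}^{s/(s+2)}\,\|f-f_0\|_{H^s(\mathcal O)}^{2/(s+2)} \lesssim \eta_\varepsilon^{s/(s+2)}
\]
on the high-probability event $\{\|f-f_0\|_{(H^2_0)^*}\le M\eta_\varepsilon\}$. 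I expect the main obstacle to be the rigorous verification of the global lower bound in the stability equivalence, since it relies crucially on the positivity of $u_f$ and on the $C^s$-control built into $\mathcal F$; without these, the inverse stability of the nonlinear forward map $f\mapsto u_f$ would fail and no GGV-type argument at the minimax rate would be available, which is precisely why the hypotheses on $\mathcal F$ cannot easily be relaxed.
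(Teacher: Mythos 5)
Your overall architecture is correct and matches the paper's at the structural level: both proofs run Theorem \ref{ggv} with the information distance $\|u_f-u_{f_0}\|_{L^2}$, verify the small-ball condition through the forward stability bound $\|u_f-u_{f_0}\|_{L^2}\lesssim\|f-f_0\|_{(H^2_0)^*}$ (Proposition \ref{sbaest}B), and obtain the tests from the inverse stability bound $\|f-f_0\|_{(H^2_0)^*}\lesssim\|u_f-u_{f_0}\|_{L^2}$, which you derive by exactly the Green's-identity computation that the paper performs inside Lemma \ref{tailexp}. Where you genuinely diverge is in the test construction: you use likelihood-ratio tests on an $\eta_\varepsilon$-net of the alternative together with the metric entropy bound $\log N(\eta,\mathcal F,(H^2_0)^*)\lesssim \eta^{-d/(s+2)}$ (whose exponent indeed matches the threshold $\eta_\varepsilon\gtrsim\varepsilon^{(2s+4)/(2s+4+d)}$), whereas the paper builds a frequentist plug-in estimator: a least-squares estimator $\hat u$ over an $H^{s+2}$-ball with boundary constraint, concentration via peeling and Borell--Sudakov--Tsirelson, and then $\hat f=\Delta\hat u/(2\hat u)$ with the test $1\{\|\hat f-f_0\|>\bar D\eta\}$. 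Your route is more classical GGV and buys a shorter argument; the paper's route additionally yields the minimax upper bound of Proposition \ref{sobrates} as a by-product and avoids entropy estimates in negative-smoothness norms.

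The one soft spot is your final $L^2$ step. The inequality (\ref{lionsm}) interpolates between $L^2$ and $H^s$ (both nonnegative indices); it does not give $\|f-f_0\|_{L^2}\lesssim\|f-f_0\|_{(H^2_0)^*}^{s/(s+2)}\|f-f_0\|_{H^s}^{2/(s+2)}$, which involves the dual endpoint $(H^2_0)^*$ whose test functions are constrained to vanish on $\partial\mathcal O$; a direct duality argument controls $\int_{\mathcal O}h^2$ only away from the boundary, and making the interpolation rigorous across the boundary is exactly the kind of issue the paper avoids. The clean fix stays inside your own framework: from the pointwise identity $f-f_0=\bigl(u_{f_0}\Delta(u_f-u_{f_0})-(u_f-u_{f_0})\Delta u_{f_0}\bigr)/(2u_fu_{f_0})$ one gets $\|f-f_0\|_{L^2}\lesssim\|u_f-u_{f_0}\|_{H^2}$, and then (\ref{lionsm}) applied on the $u$-scale between $L^2$ and the fixed $H^{s+2}$-bound of Proposition \ref{sbaest} gives $\|u_f-u_{f_0}\|_{H^2}\lesssim\|u_f-u_{f_0}\|_{L^2}^{s/(s+2)}\lesssim\|f-f_0\|_{(H^2_0)^*}^{s/(s+2)}$. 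This is precisely how the exponent $s/(s+2)$ arises in the paper (there applied to $\hat u-u_f$ rather than to $u_f-u_{f_0}$), and with this substitution your argument closes.
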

\begin{proof}
Proposition \ref{sbaest} implies that for each $f \in \mathcal F$ a unique solution $u_f$ to (\ref{PDE}) exists, and that $\{u_f: f \in \mathcal F\}$ is bounded in $H^{s+2}(\mathcal O)$. We apply Theorem \ref{ggv} below with $\mathcal F=\mathcal F_\varepsilon$ and its trace Borel-$\sigma$-field $\mathcal B_\mathcal F$ of $L^2(\mathcal O)$, $G(f)=u_f, \mathbb H=L^2(\mathcal O)$ and $\eta_\varepsilon=\bar \eta_\varepsilon/c$ where $c$ is the constant from Proposition \ref{sbaest}B, so that (\ref{sba}) gives
$$\Pi(f \in \mathcal F: \|G(f)-G(f_0)\|_{\mathbb H} < \bar \eta_\varepsilon) \ge \Pi(f \in \mathcal F: \|f-f_0\|_{(H_0^2)^*} < \eta_\varepsilon) \ge e^{-C(\eta_\varepsilon/\varepsilon)^2},$$ verifying (\ref{smallball}) in Theorem \ref{ggv} with $C'=Cc^2$. Thus if we can construct tests $\Psi(Y)$ such that 
\begin{equation} \label{tests0}
E^Y_{f_0}\Psi(Y) + \sup_{f \in \mathcal F, \|f-f_0\|_{L^2} \ge M \eta^{s/(s+2)}_\varepsilon} E^Y_f(1-\Psi(Y)) \le L e^{-(C'+4) (\eta_\varepsilon/\varepsilon)^2}.
\end{equation}
\begin{equation} \label{tests1}
E^Y_{f_0}\Psi(Y) + \sup_{f \in \mathcal F, \|f-f_0\|_{(H_0^2)^*} \ge M \eta_\varepsilon} E^Y_f(1-\Psi(Y)) \le L e^{-(C'+4) (\eta_\varepsilon/\varepsilon)^2},
\end{equation}
for $M$ large enough, respectively, then the result will follow from appropriate choices of $\eta^*_\varepsilon, d(\cdot, \cdot)$ in Theorem \ref{ggv}. We achieve this by first constructing an estimator for $u_{f}$ from which we obtain a plug-in test for $f$, using also (\ref{jensen}) and the resulting identification equation (\ref{inv}). The estimate of $u_f$ is obtained as the non-parametric least squares (or maximum likelihood) estimator $\hat u$ obtained from maximising the log-likelihood function (cf.~(\ref{likeli}))
\begin{equation}
\bar \ell(u) = \frac{1}{\varepsilon^2} \langle Y, u \rangle_{L^2(\mathcal O)} - \frac{1}{2 \varepsilon^2} \|u\|_{L^2(\mathcal O)}^2 ~\text{ over }~
\mathcal U_g = \big \{\|u\|_{H^{s+2}(\mathcal O)} \le D', u_{\partial \mathcal O} = g \text{ on } \partial \mathcal O \big \}.
\end{equation}
Here $D'$ is chosen large enough so that $\{u_f : f \in \mathcal F\} \subset \mathcal U_g$. By the Sobolev imbedding $H^{s+2}(\mathcal O) \subset C^1(\bar {\mathcal O})$, so by Theorem 2.4.7 in \cite{D02} the set $\mathcal U_g$ is totally bounded in $C(\bar {\mathcal O})$. Since $\{u:\|u\|_{H^{s+2}(\mathcal O)} \le D'\}$ is easily seen to be closed for the $\|\cdot\|_\infty$-topology and since limits $u$ of uniformly convergent sequences in $\mathcal U_g$ necessarily must satisfy $u=g$ on $\partial \mathcal O$, we deduce that $\mathcal U_g$ is also closed and hence a compact subset of $C(\bar {\mathcal O})$. Next, the $L^2$-metric entropy of the bounded subset $\mathcal U_g$ of $H^{s+2}(\mathcal O)$ can be shown (Chapter 3 in \cite{ET96}) to be at most 
\begin{equation}\label{etbd}
\log N(\mathcal U_g, \gamma, \|\cdot\|_{L^2(\mathcal O)}) \lesssim \left(A/\gamma \right)^{d/(s+2)},~\forall \gamma>0,~A=A(D').
\end{equation}
Since $s+2>d/2$ the square-root of the $L^2$-metric entropy is $\gamma$-integrable at zero and Proposition 2.1.5 and Theorem 2.3.7 in \cite{GN16} imply that the real-valued maps $u \mapsto \langle u, \mathbb W\rangle_{L^2}$ and then also $\bar \ell(u)$ define Borel random variables in the space of $L^2$-continuous functions on $\mathcal U_g$. Thus, using Exercise 7.2.3 in \cite{GN16} a (measurable) maximiser $\hat u \in \mathcal U_g$ of $\bar \ell(u)$ over $\mathcal U_g$ exists almost surely. We now derive its rate of convergence to $u_f$, initially in $L^2(\mathcal O)$-distance, using `peeling' techniques commonly used in $M$-estimation \cite{vdG93, vdG00}. We have for all $f \in \mathcal F$ and $D_0$ large enough,
\begin{align} \label{l2esti}
& P_{f}^Y(\|\hat u - u_{f}\|_{L^2} \ge D_0 \eta_\varepsilon) \notag \\
&= P_{f}^Y(\bar \ell(\hat u) - \bar \ell (u_{f}) \ge 0, \|\hat u - u_{f}\|_{L^2} \ge D_0 \eta_\varepsilon) \notag \\
&=P_f^Y \big(-\frac{1}{2\varepsilon^2} \|\hat u - u_{f}\|_{L^2}^2 + \frac{1}{\varepsilon} \langle \hat u - u_{f}, \mathbb W \rangle_{L^2} \ge 0, \|\hat u -u_{f}\|_{L^2} \ge D_0 \eta_\varepsilon \big) \notag \\
&=P_f^Y \left(\frac{\langle \hat u - u_{f}, \mathbb W \rangle_{L^2}}{\|\hat u - u_{f}\|_{L^2}^2} \ge \frac{1}{2\varepsilon}, \|\hat u -u_{f}\|_{L^2} \ge D_0 \eta_\varepsilon \right) \notag \\
&\le \sum_{r=0}^R \Pr \Big(\sup_{u \in \mathcal U_g: 2^r D_0 \eta_\varepsilon \le \|u-u_f\|_{L^2} \le 2^{r+1}D_0 \eta_\varepsilon} |\langle u-u_{f}, \mathbb W \rangle| \ge \frac{2^{2(r+1)} D^2_0 \eta^2_\varepsilon}{8\varepsilon} \Big),
\end{align}
where $R=R(D')<\infty$. Now using the metric entropy bound for Gaussian processes (\ref{dudmet2}) and (\ref{etbd}) we have for all $\sigma>0$ that $$E \sup_{u \in \mathcal U_g:  \|u-u_f\|_{L^2} \le \sigma} |\langle u-u_{f}, \mathbb W \rangle| \lesssim \int_0^\sigma (A/\gamma)^{d/(2s+4)}d\gamma \leq C' \sigma^{1-\frac{d}{2s+4}}.$$ For choices $\sigma =\sigma_r \equiv 2^{r+1}D_0 \eta_\varepsilon, r \ge 0$, using $\eta_\varepsilon \ge \bar c  \varepsilon^{(2s+4)/(2s+4+d)}$ and for $D_0$ large enough, we have  $C'\sigma^{1-\frac{d}{2s+4}} \le \sigma^2/(16\varepsilon)$ and thus, using the Borell-Sudakov-Tsirelson inequality (Theorem 2.5.8 in \cite{GN16}), the sum in (\ref{l2esti}) can be bounded by
\begin{align*}
& \sum_{r=0}^R \Pr \Big(\sup_{u \in \mathcal U_g:  \|u-u_f\|_{L^2} \le \sigma_r} |\langle u-u_{f}, \mathbb W \rangle| - E\sup_{u \in \mathcal U_g:  \|u-u_f\|_{L^2} \le \sigma_r} |\langle u-u_{f}, \mathbb W \rangle| \ge \frac{\sigma^2_r}{16\varepsilon} \Big) \\
&~~~~~~~~~~~\le 2\sum_{r=0}^R \exp\left\{-\frac{c D_0^2 2^{2r} \eta_\varepsilon^2}{\varepsilon^2}\right\} \le L \exp\left\{-\frac{c' D_0^2  \eta_\varepsilon^2}{\varepsilon^2}\right\}
\end{align*}
where $c'=c'(R), L>0$ are fixed constants. Since both $\hat u, u_{f}$ are contained in a ball of $H^{s+2}(\mathcal O)$ of radius $D'$, what precedes and the interpolation inequality (\ref{lionsm}) imply
\begin{equation}\label{halpha}
P_{f}^Y(\|\hat u - u_{f}\|_{H^\alpha} \ge c'' D_0 \eta^{(s+2-\alpha)/(s+2)}_\varepsilon) \le L \exp\left\{-\frac{c' D_0^2  \eta_\varepsilon^2}{\varepsilon^2}\right\}, 0 \le \alpha <s+2,
\end{equation}
for a fixed constant $c''>0$, which by the Sobolev imbedding, for $d/2+2 < \alpha<s+2$ also implies a convergence rate in $\|\cdot\|_{C^2(\mathcal O)}$-norm
\begin{equation}\label{c2}
P_{f}^Y(\|\hat u - u_{f}\|_{C^2} \ge c''' D_0 \eta^{\zeta}_\varepsilon) \le L \exp\left\{-\frac{c' D_0^2  \eta_\varepsilon^2}{\varepsilon^2}\right\}, \zeta>0, c'''>0.
\end{equation}
Now recall (\ref{inv}) and define
\begin{equation}\label{hatf}
\hat f =  \frac{\Delta \hat u}{2\hat u} 1_{A_\varepsilon}, ~~A_\varepsilon = \Big\{\inf_{x \in \mathcal O} \hat u(x) \ge c_0, \|\Delta \hat u\|_{\infty} \le D_2\Big\},~c_0>0, D_2<\infty,
\end{equation}
which defines a random variable in $C(\bar{\mathcal O})$. 

\begin{lemma} \label{tailexp}
For every $v >0$ there exist $c_0>0$ small enough and $D_2, D_3$ large enough such that for all $\bar D \ge D_3$ and $\eta_\varepsilon$ as in the theorem, we have, for all $f \in \mathcal F$,
$$P_f^Y \big(\{\|\hat f - f\|_{L^2} > \bar D \eta^{s/(s+2)}_\varepsilon \} \cup \{\|\hat f - f\|_{(H_0^2)^*} > \bar D \eta_\varepsilon\} \big) \le e^{-v(\eta_\varepsilon/\varepsilon)^2}.$$
\end{lemma}
\begin{proof}
If $B$ is the event whose probability we want to bound we can write $$P_f^Y (B) = P_f^Y (A_\varepsilon \cap B) + P_f^Y(A_\varepsilon^c \cap B)$$ and the second probability is no greater than $P_f^Y(A_\varepsilon^c)$, which is less than $e^{-v(\eta/\varepsilon)^2}/3$ for $c_0$ small and $D_2$ large enough, since $u_{f} \in C^2(\mathcal O)$ is bounded away from zero (see (\ref{jensen})), and using (\ref{c2}) for sufficiently large $D_0$. It remains to bound $P_f^Y (A_\varepsilon \cap B)$ by $2e^{-v(\eta/\varepsilon)^2}/3$. For $\hat f$ as in (\ref{hatf}) we see that (\ref{jensen}), (\ref{inv}) imply on $A_\varepsilon$ that 
\begin{align*}
 \|\hat f -f\|_{L^2} &\lesssim \|\hat u - u_f\|_{H^2}, \\
 \|\hat f-f\|_{(H^2_0)^*} &\lesssim \|[\hat u^{-1}-u^{-1}_f] \Delta \hat u_f\|_{L^2} + \|\Delta (\hat u - u_f)u_f^{-1}\|_{(H^2_0)^*} \\
&\lesssim \|\hat u-u_f\|_{L^2} + \|\Delta (\hat u - u_f)\|_{(H^2_0)^*}.
\end{align*}
Using density of $C^2(\mathcal O)$ in $H^2(\mathcal O)$, the last term can be further bounded by
\begin{align*}
\|\Delta (\hat u - u_f)\|_{(H_0^2)^*} &= \sup_{h \in C_0 \cap C^2:\|h\|_{H^2}\le 1} \left|\int_\mathcal O h \Delta (\hat u - u_f) \right|  = \sup_{h \in C_0 \cap C^2:\|h\|_{H^2}\le 1} \left|\int_\mathcal O \Delta h  (\hat u - u_f) \right| \\
& \le \sup_{\|h\|_{H^2} \le 1} \|\Delta h\|_{L^2}  \|\hat u-u_f\|_{L^2} \lesssim \|\hat u-u_f\|_{L^2}
\end{align*}
using Green's identity (\cite{GT98}, p.17) twice, that both $h$ and $\hat u -u_f$ have vanishing boundary traces $0$ and $g-g=0$, respectively, and the Cauchy-Schwarz inequality.

Summarising we conclude from (\ref{halpha}) that for every $v$ we can find $\bar D$ and $D_0$ large enough so that
\begin{equation}\label{tail}
P_f^Y(\|\hat f - f\|_{L^2} > \bar D \eta^{s/(s+2)}_\varepsilon ) \le P_f^Y(\|\hat u_f - u_f\|_{H^2} > d\bar D \eta^{s/(s+2)}_\varepsilon) \le  e^{-v(\eta_\varepsilon/\varepsilon)^2}/3
\end{equation}
\begin{equation}\label{tail2}
P_f^Y(\|\hat f - f\|_{(H_0^2)^*} > \bar D \eta_\varepsilon) \le P_f^Y(\|\hat u_f - u_f\|_{L^2} > d' \bar D \eta_\varepsilon) \le  e^{-v(\eta_\varepsilon/\varepsilon)^2}/3,
\end{equation}
which completes the proof of the lemma by the union bound.
\end{proof}

The usual plug in test (Proposition 6.2.2 in \cite{GN16}) defined by $$\Psi(Y) = 1\{\|\hat f - f_0\|_{L^2} \ge \bar D \eta^{s/(s+2)}_\varepsilon \}$$ for $\bar D$ a large enough constant, and likewise with $(H^2_0)^*, \eta_\varepsilon$ replacing $L^2(\mathcal O), \eta^{s/(s+2)}_\varepsilon$, gives
$$E^Y_{f_0}\Psi(Y) \le  P_{f_0}^Y(\|\hat f - f_0\|_{L^2} > \bar D  \eta^{s/(s+2)}_\varepsilon ) \le  e^{-(C'+4) (\eta_\varepsilon/\varepsilon)^2}$$ and also, for $M$ large enough and $f$ in the alternative (\ref{tests0})
\begin{align*}
E^Y_f(1-\Psi(Y)) &= P_f^Y(\|\hat f - f_0\|_{L^2} \le \bar D \eta^{s/(s+2)}_\varepsilon)  \le P_f^Y(\|f- f_0\|_{L^2} - \bar D \eta^{s/(s+2)}_\varepsilon \le \|\hat f -f\|_{L^2}) \\
& \le P_f^Y(\|\hat f- f\|_{L^2} \ge (M-\bar D)\eta^{s/(s+2)}_\varepsilon)  \le e^{-(C'+4) (\eta_\varepsilon/\varepsilon)^2},
\end{align*}
and likewise for the $(H^2_0)^*$-errors, completing the proof of the theorem.
\end{proof}

We finally turn to the verification of (\ref{sba}) for priors featuring in Theorem \ref{rates}.

\begin{proposition}\label{rateslate} Under the conditions of Theorem \ref{rates} and given $C>0$, we can choose $\bar L$ large enough such that the prior (\ref{prior}) with $J$ such that $2^J \simeq \varepsilon^{-2/(2s+4+d)}$ satisfies (\ref{sba})  for the choice $$\eta_\varepsilon = \bar L\sqrt {\log (1/\varepsilon)}  \cdot \varepsilon^{(2s+4)/(2s+4+d)}.$$ As a consequence $$\eta^{s/(s+2)}_\varepsilon \simeq \varepsilon^{\frac{2s}{2s+4+d}}\log^\gamma(1/\varepsilon),~~\gamma = s/(2s+4),$$ is the posterior contraction rate about $f_0$ in $L^2$, and in $(H^2_{0})^*$ we obtain the contraction rate
$\eta_\varepsilon$, both up to multiplicative constants.
\end{proposition}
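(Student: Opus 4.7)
The plan is to verify the small-ball hypothesis (\ref{sba}) of Theorem \ref{meta}, which then directly produces both contraction rates via $\eta_\varepsilon^{s/(s+2)}\simeq \varepsilon^{2s/(2s+4+d)}\log^\gamma(1/\varepsilon)$, $\gamma=s/(2s+4)$. The first step is to pass from $f$ to $\varphi=\log f$: both $\varphi$ and $\varphi_0$ lie in a fixed ball of $C^s(\mathcal O)\hookrightarrow\mathcal C^2(\mathcal O)$ on $\supp\Pi$ by (\ref{b01}) and the compact support of $\varphi_0$, so the identity $f-f_0=(\varphi-\varphi_0)\int_0^1 e^{t\varphi+(1-t)\varphi_0}\,dt$ combined with the multiplicative inequality (\ref{mult}) (the integral factor is uniformly bounded in $\mathcal C^2$) yields the deterministic bound $\|f-f_0\|_{(H_0^2)^*}\le C\|\varphi-\varphi_0\|_{(H_0^2)^*}$. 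It thus suffices to estimate the small-ball probability of $\varphi-\varphi_0$ in $(H_0^2)^*$.

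For that I would use a wavelet duality estimate: for test functions $\psi\in C_0(\mathcal O)$ with $\|\psi\|_{H^2}\le 1$, the $H^2$-Riesz basis property of the boundary-corrected wavelets gives $\sum_{l,r}2^{4l}\langle\psi,\Phi^{\mathcal O}_{l,r}\rangle^2\lesssim 1$, so by Cauchy--Schwarz
$$\|g\|_{(H_0^2)^*}\lesssim \Bigl(\sum_{l,r}2^{-4l}|\langle g,\Phi^{\mathcal O}_{l,r}\rangle|^2\Bigr)^{1/2}.$$
Writing $b^0_{l,r}=\langle\varphi_0,\Phi^{\mathcal O}_{l,r}\rangle$ and splitting $\varphi-\varphi_0$ into a stochastic part at levels $l\le J$ and a deterministic bias at levels $l>J$, the decay (\ref{decay}) applied to the bias contribution yields at most $C 2^{-J(s+2)}\simeq \varepsilon^{(2s+4)/(2s+4+d)}=o(\eta_\varepsilon)$ in $(H_0^2)^*$, and so it can be absorbed.

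For the stochastic part I would introduce a scale-proportional tolerance $\delta_l=\eta_\varepsilon B 2^{-l(s+d/2)}\bar l^{-2}$, a fraction $\eta_\varepsilon$ of the prior half-width at level $l$. On the event $A=\{|b_{l,r}-b^0_{l,r}|\le\delta_l,\ \forall\, l\le J,\, r\le N_l\}$, the duality bound reduces to $\eta_\varepsilon^2\sum_{l\le J}2^{-2l(s+2)}\bar l^{-4}\lesssim \eta_\varepsilon^2$, as required. By (\ref{decay}) each $b^0_{l,r}$ lies in the support of the uniform prior on $b_{l,r}$, so the marginal probability is at least $\eta_\varepsilon/2$; independence across $(l,r)$ then gives
$$\log \Pi(A)\ge \sum_{l\le J}N_l\log(\eta_\varepsilon/2)\gtrsim -2^{Jd}\log(1/\varepsilon)\simeq -(c/\bar L^2)(\eta_\varepsilon/\varepsilon)^2$$
for some fixed $c$ independent of $\bar L$, so picking $\bar L\ge \sqrt{c/C}$ verifies (\ref{sba}).

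The main obstacle is the balancing in this last step: one must satisfy simultaneously the variance constraint $\sum_l N_l 2^{-4l}\delta_l^2\lesssim \eta_\varepsilon^2$ and the probability target $\prod_{l,r}\Pr(|b_{l,r}-b^0_{l,r}|\le\delta_l)\ge e^{-C(\eta_\varepsilon/\varepsilon)^2}$. Taking $\delta_l$ proportional to the prior half-width is essentially forced: it makes the per-coefficient probability scale-independent and, thanks to the decay factor $2^{-2l(s+2)}\bar l^{-4}$ built into the prior weights, keeps the variance sum finite. Any other allocation either blows up the variance budget at low frequencies or loses too much probability at high ones.
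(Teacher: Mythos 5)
Your argument follows the paper's proof almost step for step: the reduction from $f$ to $\varphi=\log f$ via (\ref{b01}) and (\ref{mult}), the split into a high-frequency bias at levels $l>J$ and a stochastic part at levels $l\le J$, the level-proportional tolerance $\delta_l$ equal to a fraction $\eta_\varepsilon$ of the prior half-width, the per-coordinate probability $\ge\eta_\varepsilon/2$, and the final count $2^{Jd}\log(1/\varepsilon)\simeq(\eta_\varepsilon/\varepsilon)^2/\bar L^2$ are all exactly the paper's computations, and they are correct.

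The one step you should not take for granted is the duality inequality $\|g\|_{(H_0^2)^*}\lesssim\bigl(\sum_{l,r}2^{-4l}|\langle g,\Phi^{\mathcal O}_{l,r}\rangle|^2\bigr)^{1/2}$ derived from a claimed $H^2(\mathcal O)$-Riesz-basis property $\sum_{l,r}2^{4l}\langle\psi,\Phi^{\mathcal O}_{l,r}\rangle^2\lesssim\|\psi\|_{H^2(\mathcal O)}^2$ of the \emph{full} boundary-corrected basis. The paper explicitly refuses to assert any Sobolev characterisation by wavelet coefficients near $\partial\mathcal O$ (see the discussion after (\ref{hzbc})), and nothing in Section \ref{fsp0} supplies it: a boundary wavelet $\Phi^{bc}_{l,r}$ restricted to $\mathcal O$ is not the restriction of an $\mathbb R^d$-wavelet, so the standard $\mathbb R^d$ characterisation (\ref{sobwav}) does not transfer. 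Fortunately your own computation shows the weights $2^{-4l}$ are superfluous for the stochastic part: with $\delta_l=\eta_\varepsilon B2^{-l(s+d/2)}\bar l^{-2}$ one already has $\sum_{l\le J}N_l\delta_l^2\lesssim\eta_\varepsilon^2\sum_l2^{-2ls}\bar l^{-4}\lesssim\eta_\varepsilon^2$, so the crude bound $\|\cdot\|_{(H_0^2)^*}\le\|\cdot\|_{L^2}$ together with Parseval in $L^2(\mathcal O)$ (which holds for the boundary-corrected basis with no caveats) suffices there — this is what the paper does. The $2^{-4l}$ gain is genuinely needed only for the bias $\Pi_{V_J}\varphi_0-\varphi_0$, and there the compact support of $\varphi_0$ kills all boundary wavelets for $l$ large, so one can cut off the test function and invoke (\ref{sobwav}) on $\mathbb R^d$, as in (\ref{intest}). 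With the duality estimate restricted to that interior setting, your proof is complete and coincides with the paper's.
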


\begin{proof}
The prior (\ref{prior}) defines a Borel probability measure on $V_J$ and thus on the class $\mathcal F$ from Theorem \ref{meta}. As a consequence of (\ref{b01}) the functions  $\phi, \phi_0$ and then also $e^{\varphi_0} \sum_{k=2}^\infty (\varphi-\varphi_0)^k/k!$ are uniformly bounded in $C^2(\mathcal O)\subset \mathcal C^2(\mathcal O)$. Thus, using also (\ref{mult}) we have $$\|f-f_0\|_{(H^2_0)^*} \le \sup_{\|h\|_{H^2(\mathcal O)} \le 1} \left| \int_\mathcal O h(e^\varphi- e^{\varphi_0}) \right| \le c_0  \|\varphi-\varphi_0\|_{(H^2)^*}.$$  
We can decompose $\|\phi-\phi_0\|_{(H^2)^*} \le \|\Pi_{V_J}(\phi-\phi_0)\|_{(H^2)^*} + \|\Pi_{V_J}(\phi_0)-\phi_0\|_{(H^2)^*}. $
For the second term we notice that by the compact support of $\phi_0$ and (\ref{wavprop}), for $l >J$ large enough necessarily $\int_\mathcal O \phi_0 \Phi^{bc}_{l,r} = 0$ for the boundary corrected wavelets and $\Pi_{V_J}(\phi_0)-\phi_0$ then consists of a sum of interior Daubechies wavelets $\Phi_{l,r}^\mathcal O = \Phi_{l,r}$ supported compactly in $\mathcal O$, in particular there exists $K \subset \mathcal O$ compact such that $\Pi_{V_J}(\phi_0)-\phi_0$ is supported within $K$ for $J$ large enough. Multiplying $h \in H^2(\mathcal O)$ by a function in $C^\infty_c(\mathcal O)$ that equals one on $K$ we obtain $\tilde h \in H^2(\mathbb R^d) \cap C_c(\mathcal O)$ such that $\tilde h =h$ on $K$ and $\|\tilde h\|_{H^2(\mathbb R^d)} \le c \|h\|_{H^2(\mathcal O)}$. Then using Parseval's identity and the Cauchy-Schwarz inequality, we can estimate the dual norm by
\begin{align} \label{intest}
\|\Pi_{V_J}(\phi_0)-\phi_0\|_{(H^2)^*}&=\sup_{\|h\|_{H^2(\mathcal O)} \le 1}\left|\int_\mathcal O h (\Pi_{V_J}(\phi_0)-\phi_0) \right| \\
&=\sup_{\|h\|_{H^2(\mathcal O)} \le 1}\left|\int_{\mathbb R^d} \tilde h (\Pi_{V_J}(\phi_0)-\phi_0) \right| \notag \\
& = \sup_{\|h\|_{H^2(\mathcal O)} \le 1}\left|\sum_{l>J,r} 2^{-2l} \langle \phi_0, \Phi_{l,r} \rangle_{L^2(\mathbb R^d)} 2^{2l}\langle \tilde h, \Phi_{l,r} \rangle_{L^2(\mathbb R^d)} \right| \notag \\
&\le c' \sup_{\|h\|_{H^2(\mathcal O)} \le 1} \|\tilde h\|_{H^2(\mathbb R^d)} \sqrt {\sum_{l>J,r} 2^{-4l}|\langle \Phi_{l,r}, \phi_0 \rangle|^{2}} \notag \\
&\le c'' 2^{-J(s+2)}J^{-2},
\end{align}
where we use (\ref{sobwav}) and (\ref{decay}). Now introducing `true' coefficients $$\phi_{0,l,r} = 2^{-l(s+d/2)}\bar l^{-2}u_{0,l,r}, |u_{0,l,r}| \le B,$$ and if $U$ is a uniform $U(-1,1)$ random variable and the $u_{l,r}\sim^{i.i.d.}U(-B,B)$, then using $\sum_{l \le J} N_l \le \bar c_0 2^{Jd}$ and what precedes we can lower bound, for $\bar L$ large enough,
\begin{align*}
\Pi (f: \|f-f_0\|_{(H^2_0)^*}<\eta_\varepsilon) &\ge \Pi (\phi: \|\phi-\phi_0\|_{(H^2)^*} <  \eta_\varepsilon/c_0) \\
&\ge \Pi \left(\phi: \|\Pi_{V_J}(\phi-\phi_0)\|_{(H^2)^*}< \eta_\varepsilon/c_0 - c'' J^{-2}2^{-J(s+2)} \right) \\
& \ge \Pi \left(\phi: \|\Pi_{V_J}(\phi-\phi_0)\|_{L^2}^2 < c_1 \eta_\varepsilon^2 \right) \\
& = \Pr \left(\sum_{l\le J,r } 2^{-l(2s+d)} \bar l^{-2} |u_{l,r}-u_{0,l,r}|^2 < c_1 \eta_\varepsilon^2  \right) \\
& \ge \Pr \left( \max_{l\le J, r} |u_{l,r}-u_{0,l,r}| < c_3 \eta_\varepsilon \right) \\
& = \prod_{l\le J} \prod_{r=1}^{N_l} \Pr (|U-(u_{0,l,r}/B)| < c_4 \eta_\varepsilon) \\
& \ge \Pr \left(c_4 \eta_\varepsilon/2 \right)^{\bar c_0 2^{Jd}} \ge e^{-c_5\log (c_4/\eta_\varepsilon) \varepsilon^{-2d/(2s+4+d)}} \ge e^{-C (\eta_\varepsilon/\varepsilon)^2},
\end{align*}
completing the proof.
\end{proof}
\begin{remark}\label{momrem}
If $\|\cdot\|$ is either the $L^2$ or $(H^2_0)^*$ norm and $r_\varepsilon$ the corresponding contraction rate, then the proof of Theorem \ref{rates} via Theorem \ref{ggv} actually implies that as $\varepsilon \to 0$, for some $c>0$, $$\Pi(\|f-f_0\| > r_\varepsilon|Y) = O_{P_{f_0}^Y} \big(e^{-c(\eta_\varepsilon/\varepsilon)^2}\big).$$
\end{remark}

\subsection{Proof of Theorem \ref{main}}

The proof is organised in a sequence of steps, and the main strategy is to prove the Bernstein-von Mises theorem via simultaneously controlling the Laplace transform of a collection of suitably regular linear functionals, an approach inspired by the papers of Isma\"el Castillo and co-authors \cite{CN13, CN14, C14, CR15, C17}. Of course a main challenge is to make this proof work outside of the basic LAN models considered in the references just mentioned, namely in the PDE setting considered here. However, even disregarding the different LAN structure, our proof needs to confront several new challenges when compared to the above papers: our prior has a boundary and hence using the `perturbation of the likelihood function' approach employed in \cite{RR12}, \cite{CR15}, \cite{CN14} needs some adjustments near the boundary. Moreover, our prior is supported in a $s$-regular H\"older wavelet ellipsoid that is asymptotically smaller than the ellipsoids considered in \cite{CN14}, which puts stronger constraints on the admissible directions one can choose when constructing perturbations (see also Remark \ref{lesson}).

\medskip

\textbf{Step I: Localisation of the posterior near $f_0$.}

\smallskip

We have from Proposition \ref{rateslate} that as $\varepsilon \to 0$,
$$\Pi\left(\|f-f_0\|_{L^2(\mathcal O)} > M \varepsilon^{2s/(2s+4+d)} \log^\gamma (1/\varepsilon) |Y\right) \to^{P_{f_0}^Y} 0$$ and 
$$\Pi\Big(\|f-f_0\|_{(H^2_0)^*} > M \varepsilon^{(2s+4)/(2s+4+d)} \sqrt{\log(1/\varepsilon)} |Y\Big) \to^{P_{f_0}^Y} 0.$$ 
From (\ref{b01}) we know $\|f\|_{C^s(\mathcal O)} \leq C'$ for any $f=e^\phi$ and so both prior and posterior are supported in a fixed ball of $C^s(\mathcal O) \subset H^s(\mathcal O)$. Since also $f_0 \in  H^s(\mathcal O)$ we have $\|f-f_0\|_{H^s} \le D$ for some fixed constant $D$ and recalling the standard interpolation inequality for Sobolev norms (see \cite{LM72}, Remark 1.9.1 on p.44)
\begin{equation} \label{lionsm}
\|f\|_{H^\alpha(\mathcal O)} \lesssim \|f\|_{H^s(\mathcal O)}^{\alpha/s} \|f\|_{L^2(\mathcal O)}^{(s-\alpha)/s},~~0<\alpha<s,
\end{equation}
we also obtain for all $M$ large enough and $\bar \gamma=\bar\gamma (\gamma, s,\alpha)>0$ that, as $\varepsilon \to 0$,
\begin{equation} \label{fullrates}
\Pi\left(\|f-f_0\|_{H^{\alpha}(\mathcal O)} > M \varepsilon^{2(s-\alpha)/(2s+4+d)} \log^{\bar \gamma} (1/\varepsilon) |Y\right) \to^{P_{f_0}^Y} 0,~0< \alpha < s.
\end{equation}
Since $s>2+d/2$ we can apply the Sobolev imbedding theorem to deduce
\begin{equation} \label{suprates}
\Pi\left(\|f-f_0\|_{C^\beta(\mathcal O)} > M \varepsilon^{(2s-2\beta-d)/(2s+4+d)} \log^{\bar \gamma} (1/\varepsilon) |Y\right) \to^{P_{f_0}^Y} 0,~0 < \beta < s-d/2.
\end{equation}
which also implies a contraction rate in the uniform norm $\|\cdot\|_\infty$.

Since all $f \in \supp \Pi$ and $f_0$ are bounded and bounded away from zero on $\mathcal O$, the $\|\cdot\|_{L^2}$ and $\|\cdot\|_\infty$ contraction rates extend to $\varphi =\log f$ around $\varphi_0=\log f_0$, by simply using the estimate $\|\varphi-\varphi_0\| \lesssim \|f-f_0\|$ for these norms. The $(H^2_0)^*$-rates carry over too: To see this, notice that on the events in (\ref{suprates}) with $\beta=2<s-d/2$ we have $\|f-f_0\|_{C^2} \to 0$ and then also $\|(f-f_0)/f_0\|_{C^2} \to 0$, so that using the Taylor expansion of the logarithm we also have
\begin{align} \label{logs}
\|\varphi - \varphi_0\|_{(H^2_0)^*} &= \sup_{g \in C_0(\mathcal O): \|g\|_{H^2} \le 1} \left| \int_\mathcal O g \log (f/f_0) \right| \notag \\
& =\sup_{g \in C_0(\mathcal O): \|g\|_{H^2} \le 1} \left|\int_\mathcal O g (f-f_0) f_0^{-1} \sum_k \frac{(-1)^k}{k} \left(\frac{f-f_0}{f_0}\right)^{k-1} \right|  \lesssim \|f-f_0\|_{(H^2_0)^*} 
\end{align} 
since for $\|(f-f_0)/f_0\|_{C^2}<1/2$ the series $w=f_0^{-1}\sum_k (-1)^k k^{-1} ((f-f_0)/f_0)^{k-1}$ converges absolutely in $C^2(\mathcal O)$ and hence, after dividing and multiplying by $\|gw\|_{H^2} \le c\|g\|_{H^2}\|w\|_{C^2}$ via (\ref{mult}), the result follows noting also that $g \in C_0(\mathcal O)$ implies $gw \in C_0(\mathcal O)$.

\smallskip

For a fixed constant $M$ to be chosen, let us now define the event
\begin{equation} \label{dn}
D_\varepsilon^{M} = \{f =e^\varphi \in \supp \Pi: \|\varphi-\varphi_0\|^{(i)} \le M r(i, \varepsilon),~i=1,2,3\},
\end{equation}
where, for some $\eta=\eta(s,\gamma)>0$ and $0<\xi/2<s-2-d/2$ small enough,
 $$\|\cdot\|^{(1)} =\|\cdot\|_{L^2}, ~ r(1, \varepsilon) = \varepsilon^{2s/(2s+4+d)} \log^\eta(1/\varepsilon),$$ $$\|\cdot\|^{(2)} = \|\cdot\|_{(H^2_0)^*}, ~ r(2, \varepsilon) = \varepsilon^{(2s+4)/(2s+4+d)} \log^\eta(1/\varepsilon),$$ $$ \|\cdot\|^{(3)} =\|\cdot\|_\infty,~ r(3,\varepsilon) = \varepsilon^{(2s-d-\xi)/(2s+4+d)} \log^\eta(1/\varepsilon).$$ 
By increasing the constant $M$ to $\bar M$ we also obtain 
\begin{equation} \label{frate}
D^{M}_\varepsilon \subset \{f \in \supp \Pi: \|f-f_0\|^{(i)} \le \bar M r(i, \varepsilon),~i=1,2,3\},
\end{equation}
which is obvious for $i=1,3$ and follows also for $i=2$ by just applying the argument in (\ref{logs}) with the Taylor series expansion of the exponential (instead of the logarithm) function, using that $\|(f-f_0)/f_0\|_{C^2}$ is bounded by a fixed constant. 
 
By Theorem \ref{rates} and what precedes we have $$\Pi((D^M_\varepsilon)^c|Y)  \to 0~\text{in}~P_{f_0}^Y~\text{probability},$$ for all $M$ large enough. In particular if $W$ is the constant in inequality (\ref{iota}) below, then there exists $M_0$ such that the last limit holds for all $M \ge M_0/8W$, and given such $M_0$ we fix any value $M \ge M_0$ in what follows and just write $D_\varepsilon$ for $D_\varepsilon^M$ when no confusion may arise. Then, if $\Pi^{D_\varepsilon}(\cdot|Y)$ is the posterior distribution arising from the prior $\Pi$ restricted to the set $D_\varepsilon$, that is, from prior $\Pi(\cdot \cap D_\varepsilon)/\Pi(D_\varepsilon)$, standard arguments imply that
\begin{equation} \label{tvlim}
\sup_{B}|\Pi(B|Y)-\Pi^{D_\varepsilon}(B|Y)| \le 2\Pi(D_\varepsilon^c|Y) \to^{P_{f_0}^Y} 0.
\end{equation}
where the supremum extends over all measurable sets $B$ in $\supp \Pi$. 

\bigskip

\textbf{Step II: Construction of the perturbation $f_\tau$.}  

\smallskip

Fix $\gamma>0$. For arbitrary $\psi \in C^4_K(\mathcal O)$ such that $\| \psi\|_{C^{2+d/2+\gamma}(\mathcal O)}\le 1$, we now construct a suitable perturbation $f_\tau$ of $f \in \supp \Pi$ -- an asymptotic expansion of the log-likelihood ratio $\ell(f)-\ell(f_\tau)$ obtained in the next step will be a key element of our proof. For such $\psi$ define 
\begin{equation}
\tilde \psi=-\frac{\tilde \Psi}{f_0}
\end{equation}
with $\tilde \Psi$ as in (\ref{tildp}). From the representation (\ref{tildp2}), Proposition \ref{sbaest}A), the definition of the $C^{\alpha}$-norms and the hypotheses on $f_0, g$ which imply that $u_{f_0}^{-1} \in C^{s+2}(\mathcal O), f_0^{-1} \in C^s(\mathcal O),$ we deduce  that $\tilde \psi \in C_K(\mathcal O)$, and when $-2+d/2 \ge 0$ then also $\|\tilde \psi\|_{C^{-2+d/2+\gamma}(\mathcal O)} \le c \|\psi\|_{C^{2+d/2 +\gamma}(\mathcal O)}$ for some finite constant $c>0$. Thus from Proposition \ref{hzimb} and (\ref{hzbc})
\begin{equation} \label{tildpdec}
\|\tilde \psi\|_{\mathcal C^{-2+d/2+ \gamma, W}(\mathcal O)} \le C \|\psi\|_{C^{2+d/2 + \gamma}(\mathcal O)} ~\Rightarrow |\langle \tilde \psi, \Phi^\mathcal O_{l,r} \rangle_{L^2}| \le C 2^{-l(d-2+\gamma)}.
\end{equation}
The last estimate is true also in the more delicate case $-2+d/2 <0$, as follows from (\ref{tildp2}) and the inequalities (\ref{regul}), (\ref{multc}) in Proposition \ref{pain} below.

\smallskip

For $f \in \supp(\Pi)$ the perturbation will be
\begin{equation} \label{taud}
f_\tau = f  \exp\{\tau\} = e^{\varphi +\tau}
\end{equation}
(with a slight abuse of notation when $\tau =0$) where $\tau$ is defined as follows: Let $J$ be the cut-off parameter of the prior, and for $L \in \mathbb N, L\le J,$ define $W_L \subset L^2(\mathcal O)$ to be the linear span of those wavelet basis functions $(\Phi_{l,r}^\mathcal O: l \le L)$ for which \textit{either} $\langle \tilde \psi,\Phi^\mathcal O_{l,r} \rangle \neq 0$ \textit{or} $\langle \Phi^\mathcal O_{l,r}, \varphi_0 \rangle_{L^2} \neq 0$; in other words $W_L$ is the subspace of $V_L$ where $\Pi_{V_L}(\phi_0), \Pi_{V_L}(\tilde \psi)$ are supported.  For $\Pi_{W_L}$ the corresponding $L^2$-projection operator set
\begin{equation} \label{tau!}
\tau \equiv \tau(\phi, \psi, t, L)= \Pi_{W_L}[t\varepsilon \tilde \psi + \delta_\varepsilon (\varphi_0-\varphi)] = t \varepsilon \Pi_{W_L}[\tilde \psi] + \delta_\varepsilon \Pi_{W_L} [\varphi_0 - \varphi],~t \in \mathbb R,
\end{equation}
with, for $\gamma_1>0$ small enough chosen below (depending on $\gamma$),
\begin{equation} \label{dol}
\delta_\varepsilon = \varepsilon^{\frac{2d+\gamma_1}{2s+4+d}}.
\end{equation}
[The second summand in the perturbation (\ref{tau!}) is different from the perturbations used in previous proofs of this kind \cite{RR12, CN14, CR15}. It will be useful to deal with the boundary of the support of the prior in Step IV below, and was used in a related context of nonparametric maximum likelihood estimation over parameter spaces with boundaries in \cite{N07}.] 

The projection $\Pi_{W_L}$ is a bounded operator for all the norms introduced after (\ref{dn}), that is,
\begin{equation}\label{iota}
\|\Pi_{W_L}(\varphi-\varphi_0)\|^{(i)} \le W \|\varphi-\varphi_0\|^{(i)},~i=1,2,3,~\forall \varphi \in V_J,
\end{equation}
for some fixed finite constant $W \ge 1$. This is clear for $i=1$ and also follows easily also for $i=2,3$, proceeding as in the corresponding proofs for wavelet bases of $L^2(\mathbb R^d)$ (\cite{M92, D92, GN16}) and making use of  (\ref{wavprop}) and the compact support of $\varphi_0, \tilde \psi$. [In case $i=2$, one initially proves that $\Pi_{W_L}$ defines a bounded operator on $H^2(\mathcal O)$.]

Using Parseval's identity and (\ref{tildpdec}) we then have
\begin{equation} \label{regtil}
\|\Pi_{W_L} \tilde \psi\|_{L^2(\mathcal O)} \le c \sqrt {\sum_{l\le J}  \sum_r \langle \tilde \psi, \Phi^\mathcal O_{l,r} \rangle_{L^2}^2} \leq C \sqrt{\sum_{l<J} 2^{l(4-d-2\gamma)}} \lesssim 2^{J(2-d/2)} =o(\varepsilon^{-1})
\end{equation}
for all $\gamma>0$ and likewise, using (\ref{wavprop}) and (\ref{tildpdec}), for every $i, |i|=\beta, 0 \le \beta<s+d, \gamma>0$,
\begin{align} \label{regtil1}
\|D^i \Pi_{W_L}\tilde \psi\|_\infty &\leq \sum_{l\le J} 2^{l(\beta+d/2)} \sup_x \sum_r |\langle \tilde \psi, \Phi^\mathcal O_{l,r} \rangle_{L^2}| |(D^i\Phi_{0,r}^\mathcal O)(2^lx)| \notag\\
& \le c\sum_{l\le J} 2^{l(2+\beta-d/2-\gamma)}  \lesssim \varepsilon^{(-4-2\beta+d)/(2s+4+d)} =o(\varepsilon^{-1}),
\end{align}
in particular we have  $\varepsilon\|\Pi_{W_L}\tilde \psi\|_{C^\beta(\mathcal O)} = o(1)$.
Now since $\varphi, \varphi_0$ are all bounded functions  we conclude that $\|\tau\|_\infty \to 0$ as $\varepsilon \to 0$, and thus also 
\begin{equation} \label{expdev}
f_\tau \ge c'>0,~~|f_\tau - f| = |f||1-e^\tau| \le c''|\tau| \to_{\varepsilon \to 0} 0, ~c'>0, c''>0.
\end{equation}
We have likewise that 
\begin{align} \label{negst}
\notag \|f_\tau - f\|_{(H^2_0)^*} &= \sup_{g \in C_0(\mathcal O): \|g\|_{H^2}\le 1} \left|\int_\mathcal O g f (1-e^{\tau}) \right| \\
&=  \sup_{g \in C_0(\mathcal O):\|g\|_{H^2}\le 1} \left|\int_\mathcal O \tau \Big(g f \sum_{k=1}^\infty (\tau^{k-1})/k! \Big) \right| \lesssim \|\tau\|_{(H^2_0)^*}
\end{align}
since the above and the assumptions on $f, f_0,\varphi, \varphi_0$ imply that $f \sum_{k=1}^\infty (\tau^{k-1})/k!$ is bounded in $C^2(\mathcal O)$ and hence by (\ref{mult}) the functions $$g f \sum_{k=1}^\infty \frac{\tau^{k-1}}{k!} \in C_0(\mathcal O), ~ \|g\|_{H^2}\le 1$$  are bounded in $H^2(\mathcal O).$

\bigskip

\textbf{Step III: Expanding the likelihood in the Laplace transform.}
\begin{theorem}\label{work}
For $\psi \in C_K^4(\mathcal O)$ define $\tilde \Psi$ as in (\ref{tildp}). Let $t>0$ and $f_\tau$ as in (\ref{taud}), (\ref{tau!}) for any $\gamma_1>0$. Suppose $L=L_\varepsilon \uparrow \infty$ as $\varepsilon \to 0$ is a sequence of integers such that either 

i) $L=J$ or that

ii) $L \le J$ but $\|\Pi_{W_L}(\tilde \Psi)-\tilde \Psi\|_{(H^2_0)^*} \le c\varepsilon^{\bar \gamma/(2s+4+d)}$ for some $\bar \gamma>d$ and some constant $c>0$.

If $\Pi^{D_\varepsilon}(\cdot|Y)$ is the posterior distribution arising from the prior $\Pi$ restricted to the set $D_\varepsilon$ from (\ref{dn}) with $M$ chosen as before (\ref{tvlim}), and if $DG_{f_0}$ is the score operator from Lemma \ref{diff}, then for all $t \in \mathbb R$,
\begin{equation} \label{lanfinal}
E^{\Pi^{D_\varepsilon}} \left[e^{\frac{t}{\varepsilon} \langle f -f_0, \psi \rangle_{L^2}} |Y\right] =  e^{-t\langle DG_{f_0}[\tilde \Psi], \mathbb W \rangle_{L^2}  + \frac{t^2}{2}\|DG_{f_0}[\tilde \Psi]\|_{L^2}^2} \times \frac{\int_{D_\varepsilon}  e^{\ell(f_\tau)} d\Pi(f)}{\int_{D_\varepsilon} e^{\ell(f)} d\Pi(f)} \times e^{R_\varepsilon}
\end{equation}
where $R_\varepsilon = o_{P_{f_0}^Y}(1)$ uniformly in $|t| \le T$ for any $T$ and  in $\psi \in \mathcal C(b) \equiv \{\psi \in C^4_K(\mathcal O): \|\psi\|_{C^{2+d/2+\gamma}(\mathcal O)}\le b\}$ for any $b>0$, every $\gamma>0$, and every fixed compact subset $K \subset \mathcal O$.
\end{theorem}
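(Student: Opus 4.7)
Factoring $e^{\ell(f)} = e^{\ell(f_\tau)} \cdot e^{\ell(f) - \ell(f_\tau)}$ inside the numerator of the restricted posterior expectation, the identity (\ref{lanfinal}) reduces to establishing the pointwise expansion
\begin{equation*}
\tfrac{t}{\varepsilon}\langle f - f_0, \psi\rangle_{L^2} + \ell(f) - \ell(f_\tau) = -t\langle DG_{f_0}[\tilde\Psi], \mathbb W\rangle_{L^2} + \tfrac{t^2}{2}\|DG_{f_0}[\tilde\Psi]\|_{L^2}^2 + R(\varepsilon, f, t, \psi),
\end{equation*}
where $\sup_{f \in D_\varepsilon,\,|t| \le T,\, \psi \in \mathcal C(b)} |R| = o_{P_{f_0}^Y}(1)$; such a uniform error then factors outside the $\Pi$-integrals. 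The plan is therefore to (i) expand $\ell(f) - \ell(f_\tau)$ using Lemma \ref{nonlan}, (ii) linearise the forward map at $f_0$ via Proposition \ref{diff}, (iii) extract the Gaussian exponent via the Riesz identity (\ref{invop}), and (iv) control all resulting residuals uniformly using the rates defining $D_\varepsilon$ together with the wavelet estimates of Step II and the choice of $\delta_\varepsilon$.

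\textbf{Identification of the dominant terms.} Writing $A = G(f)-G(f_0)$ and $B = G(f_\tau)-G(f)$, Lemma \ref{nonlan} gives $\ell(f) - \ell(f_\tau) = \tfrac{1}{\varepsilon^2}\langle A, B\rangle + \tfrac{1}{2\varepsilon^2}\|B\|_{L^2}^2 - \tfrac{1}{\varepsilon}\langle B, \mathbb W\rangle$. Proposition \ref{diff} yields $A = DG_{f_0}[f-f_0]+R_A$ and $B = DG_{f_0}[f_\tau - f] + R_B$ with quadratic $\|\cdot\|_{(d)} \cdot \|\cdot\|_{(H^2_0)^*}$-remainders. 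Expanding $f_\tau - f = f(e^\tau - 1) = f\tau + O(\tau^2)$, replacing $f$ by $f_0$ to leading order, and using the defining identity $f_0 \tilde\psi = -\tilde \Psi$, one obtains
\begin{equation*}
DG_{f_0}[f_\tau - f] = -t\varepsilon\, DG_{f_0}[\tilde\Psi] + \delta_\varepsilon\, DG_{f_0}[f_0\Pi_{W_L}(\phi_0 - \phi)] + \text{(small in } L^2\text{)},
\end{equation*}
where the error incurred in replacing $f_0 \Pi_{W_L}\tilde\psi$ by $-\tilde\Psi$ is controlled, via (\ref{smt}) and boundedness of multiplication by $f_0 \in C^s$ on $(H^2_0)^*$, by $\|\Pi_{W_L}\tilde\psi - \tilde\psi\|_{(H^2_0)^*}$; the wavelet decay (\ref{tildpdec}) makes this $o(\varepsilon^{d/(2s+4+d)})$ in case (i) with $L=J$, while case (ii) supplies an analogous bound directly. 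The Riesz identity (\ref{invop}) now produces the essential cancellation: the leading part of $\tfrac{1}{\varepsilon^2}\langle A, B\rangle$ equals $-\tfrac{t}{\varepsilon}\langle f-f_0, \tilde\Psi\rangle_{LAN} = -\tfrac{t}{\varepsilon}\langle \psi, f-f_0\rangle_{L^2}$, exactly annihilating the $\tfrac{t}{\varepsilon}\langle f-f_0, \psi\rangle$ summand on the left. The surviving pieces of $\tfrac{1}{2\varepsilon^2}\|B\|^2$ and $-\tfrac{1}{\varepsilon}\langle B, \mathbb W\rangle$ then reproduce, to leading order, the claimed Gaussian exponent.

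\textbf{The main obstacle.} The technical core is a uniform book-keeping of every residual, which fall into four families: (a) the PDE linearisation errors $\|R_A\|_{L^2}, \|R_B\|_{L^2} \lesssim r(3,\varepsilon)\cdot r(2,\varepsilon) = o(\varepsilon)$, which is precisely where the hypothesis $s > d$ of Condition \ref{interior} is used so that this product has better-than-$\varepsilon$ scaling in $L^2$; (b) the exponential Taylor tail $\|f(e^\tau - 1 - \tau)\|$, dominated by $\|\tau\|_\infty \lesssim t\varepsilon + \delta_\varepsilon \to 0$ together with the estimate (\ref{regtil1}); (c) the terms generated by the $\delta_\varepsilon \Pi_{W_L}(\phi_0 - \phi)$ part of $\tau$, producing cross contributions of sizes $\tfrac{\delta_\varepsilon}{\varepsilon^2} r(2,\varepsilon)^2$, $\tfrac{\delta_\varepsilon}{\varepsilon}r(2,\varepsilon)$, and $\tfrac{\delta_\varepsilon^2}{\varepsilon^2} r(2,\varepsilon)^2$, all $o(1)$ thanks to the tuned choice $\delta_\varepsilon = \varepsilon^{(2d + \gamma_1)/(2s+4+d)}$; (d) the stochastic contributions $\tfrac{1}{\varepsilon}\langle \cdot, \mathbb W\rangle$, handled by Markov's inequality in the $L^2$-form $O_{P_{f_0}^Y}(\|\cdot\|_{L^2})$. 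Uniformity over $\psi \in \mathcal C(b)$ is automatic once all constants depend only on $\|\psi\|_{C^{2+d/2+\gamma}}$, which is the content of (\ref{tildpdec}), (\ref{regtil}) and (\ref{regtil1}), while uniformity over $|t|\le T$ is polynomial in $t$. The delicate point is that $\delta_\varepsilon$ must be chosen simultaneously large enough for the subsequent boundary-of-support analysis of the prior ratio $\int_{D_\varepsilon} e^{\ell(f_\tau)} d\Pi / \int_{D_\varepsilon} e^{\ell(f)} d\Pi$ to go through (cf.\ the interior-point argument in Lemma \ref{intpt}) yet small enough that none of the residuals (a)--(d) above escapes the $o_{P_{f_0}^Y}(1)$ bound; verifying that a single exponent $\gamma_1>0$ threads this needle, uniformly across all the scales $r(i,\varepsilon)$, is the book-keeping crux of the argument.
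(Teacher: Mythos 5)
Your overall architecture coincides with the paper's: factor $e^{\ell(f)} = e^{\ell(f_\tau)}\,e^{\ell(f)-\ell(f_\tau)}$ inside the numerator, expand $\ell(f)-\ell(f_\tau)$ via Lemma \ref{nonlan} and Proposition \ref{diff}, linearise the exponential perturbation $f_\tau-f=f(e^\tau-1)$, and use the Riesz identity (\ref{invop}) to cancel the $\frac{t}{\varepsilon}\langle f-f_0,\psi\rangle$ summand; your deterministic book-keeping in items (a)--(c) is essentially the paper's (with the minor caveat that for $d<4$ the linearisation remainder is controlled by $r(1,\varepsilon)r(2,\varepsilon)$ and needs only $s>d/2$; the sup-norm version $r(3,\varepsilon)r(2,\varepsilon)$ with $s>d$ is the $d\ge4$ case).

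There is, however, a genuine gap in item (d). The remainder $R_\varepsilon$ must be $o_{P_{f_0}^Y}(1)$ \emph{uniformly} over $f\in D_\varepsilon$ and $\psi\in\mathcal C(b)$, because it sits inside the $d\Pi(f)$-integral over $D_\varepsilon$; only a uniform bound lets you pull $e^{R_\varepsilon}$ outside. Markov's inequality in the form $\langle w,\mathbb W\rangle_{L^2}=O_P(\|w\|_{L^2})$ is a pointwise-in-$w$ statement and gives no control of $\sup_{f\in D_\varepsilon,\,\psi\in\mathcal C(b)}|\langle w_{f,\psi},\mathbb W\rangle_{L^2}|$. The paper handles this by proving Lipschitz continuity of $(f,\psi)\mapsto w_{f,\psi}$ in $L^2$ (via the stability estimates for $V_f$ from Section \ref{sop}), bounding the covering numbers of $\{w_{f,\psi}\}$ by those of a ball in the $c_02^{Jd}$-dimensional sieve $V_J$, and invoking Dudley's entropy inequality (\ref{dudmet2}). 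This costs an extra factor $2^{Jd/2}\sqrt{\log(1/\varepsilon)}=\varepsilon^{-d/(2s+4+d)}\sqrt{\log(1/\varepsilon)}$ relative to the pointwise bound, and that factor must be absorbed in every exponent computation -- it is exactly where conditions such as $s>2+d/2+\xi/2$ are consumed, e.g.\ in bounds of the type $\varepsilon^{-1}2^{Jd/2}\delta_\varepsilon\|\varphi-\varphi_0\|_\infty\|f-f_0\|_{L^2}\sqrt{\log(1/\varepsilon)}=o(1)$. The same issue recurs when passing from $\Pi_{W_L}\tilde\psi$ to $\tilde\psi$, where the paper uses covering numbers of $\{S_{f_0}[\psi/u_{f_0}]:\psi\in\mathcal C(b)\}$ in $C_c^{d/2+\gamma}(\mathcal O)$ via Proposition \ref{pain}C. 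Without this chaining step your exponent audit is incomplete, and several terms you declare $o(1)$ still carry an unaccounted loss of $\varepsilon^{-d/(2s+4+d)}$.
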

\begin{proof}
We have from (\ref{posterior}) and recalling $\ell(f)=\log p_f(Y)$ that for all $t \in \mathbb R$,
\begin{equation} \label{laplace}
E^{\Pi^{D_\varepsilon}} \left[e^{\frac{t}{\varepsilon} \langle f -f_0, \psi \rangle_{L^2}} |Y\right] =  \frac{\int_{D_\varepsilon} e^{(t/\varepsilon) \langle f - f_0, \psi \rangle_{L^2} +\ell(f)-\ell (f_\tau) + \ell(f_\tau)} d\Pi(f)}{\int_{D_\varepsilon} e^{\ell(f)} d\Pi(f)}
\end{equation}
The main technical work to follow now is to obtain a sharp asymptotic expansion of $\ell(f)-\ell (f_\tau)$ that is uniform in $f,\psi$. To do this, we need to start with two linearisation steps, the first takes care of the non-linearity of the forward operator $G$, and the second of the exponential nature of our perturbation $f_\tau$, ultimately leading to (\ref{lanlin}) below.

\smallskip

\textit{A: Linearisation of the $G$ operator:}

Using Lemma \ref{nonlan} and Proposition \ref{diff} to expand both $G(f)=u_f$ and $G(f_\tau)=u_{f_\tau}$ about $G(f_0)=u_{f_0}$ -- from (\ref{regtil1}) and definition of the prior we know that $f, f_\tau, f_0$ are all bounded in $C^s(\bar {\mathcal O})$ and bounded away from zero -- we obtain the approximation, for $f \in D_\varepsilon$,
\begin{align} \label{lan1}
\ell(f)-\ell (f_\tau) &= -\frac{1}{2\varepsilon^2} \left(\|u_f-u_{f_0}\|_{L^2}^2 - \|u_{f_\tau}-u_{f_0}\|_{L^2}^2 \right) + \frac{1}{\varepsilon} \langle u_f-u_{f_\tau}, \mathbb W \rangle_{L^2} \notag \\
&= -\frac{1}{2\varepsilon^2}\left(\|DG_{f_0}[f-f_0]\|_{L^2}^2 - \|DG_{f_0}[f_\tau-f_0]\|_{L^2}^2 \right) +  \frac{1}{\varepsilon}\langle DG_{f_0}[f-f_\tau], \mathbb W \rangle_{L^2} \\
&~~~+ R(f, \psi) + N(f, \psi) \notag
\end{align}
where for some $c>0$,
$$|R(f, \psi)| \le \frac{c}{\varepsilon^2}  \|f-f_0\|_{(d)}^2 \|f-f_0\|^2_{(H^2_0)^*} + \frac{c}{\varepsilon^2} \|f_\tau-f\|^2_{(d)} \|f_\tau-f\|^2_{(H^2_0)^*} $$ and where the stochastic remainder term is given by
$$N(f) = \varepsilon^{-1} \langle w(f, \psi), \mathbb W \rangle_{L^2}$$ with $w(f, \psi)$ described below.

\medskip

For the first term in the bound for $R(f,\psi)$ let us first assume $d<4$: then on $D_\varepsilon$ (cf.~(\ref{frate})) the inequality
$$\frac{c\bar M^4}{\varepsilon^2} r^2(2,\varepsilon) r^2(1,\varepsilon)   \lesssim \varepsilon^{-(4s+8+2d)/(2s+4+d)} \varepsilon^{(8s+8)/(2s+4+d)} \log^{4\eta} (1/\varepsilon)  =  o(1) $$ holds since $s>d/2$. Also from (\ref{expdev}), (\ref{negst}) we know $$\|f_\tau-f\|^2_{L^2} \|f_\tau-f\|^2_{(H^2_0)^*} \lesssim \|\tau\|^2_{L^2} \|\tau\|^2_{(H^2_0)^*}$$ which is $o(\varepsilon^2)$ uniformly in $f \in D_\varepsilon, \psi \in \mathcal C(b)$  since
\begin{equation} \label{delta1}
\delta^2_\varepsilon \|\varphi-\varphi_0\|_{L^2} \|\varphi-\varphi_0\|_{(H^2_0)^*} = o(\varepsilon),
\end{equation} 
since by (\ref{regtil})
\begin{equation}
\|\Pi_{W_L}[\tilde \psi]\|_{L^2}\|\Pi_{W_L}[\tilde \psi]\|_{(H^2_0)^*} \lesssim \|\Pi_{W_L}[\tilde \psi]\|_{L^2}^2  =o(\varepsilon^{-2}),
\end{equation}
and bounding the `cross term' similarly. When $d \ge 4$ then the $\|\cdot\|_{L^2}$-norms have to be replaced by $\|\cdot\|_\infty$-norms in the above estimates, resulting in slightly worse convergence rates on $D_\varepsilon$ and the requirement $s>d$ in place of $s>d/2$. 

\smallskip

For the stochastic term $N(f,\psi)$ we notice that $w(f,\psi)=w_{f, \psi} - w'_{f, \psi}$ in
\begin{align*}
G(f)-G(f_\tau) &= DG_f[f-f_\tau] - w'_{f,\psi} \\
&= DG_{f_0}[f-f_\tau] + (DG_{f}-DG_{f_0})(f-f_\tau) - w_{f, \psi}' \\
& \equiv DG_{f_0}[f-f_\tau] - w'_{f, \psi} + w_{f, \psi} 
\end{align*}
where $w_{f,\psi}'$ solves (just as in (\ref{sol})) the inhomogeneous Schr\"odinger equation
$$(\Delta/2) w - f_\tau w = -(f-f_\tau)V_{f}[u_f(f_\tau-f)]~~\text{ on }~\mathcal O$$ s.t. $w=0$ on $\partial \mathcal O$, and where $V_f$ denotes the inverse Schr\"odinger operator from Section \ref{sop}. Recalling (\ref{dn}) and (\ref{regtil}), both $\{\varphi: e^\varphi \in D_\varepsilon\}$ and $\{\varepsilon \Pi_{W_L}[\tilde \psi]: \psi \in \mathcal C(b)\}$ are bounded subsets of the linear space $(V_J, \|\cdot\|_{L^2})$ which is isomorphic to a $c_02^{Jd}$-dimensional Euclidean space (by Parseval's identity). Any such set can be covered by $(A/\eta)^{c_0 2^{Jd}}$ balls of $L^2$-diameter at most $\eta$ for all $0<\eta<A$ and some $A<\infty$ (Proposition 4.3.34 in \cite{GN16}). Moreover, using Lemma \ref{pest} and Proposition \ref{sbaest}  one shows
\begin{align*}
\|w_{f,\psi}-w_{f',\psi}\|_{L^2} &= \|V_{f_0}[u_{f_0}(f-f_\tau)] - V_f[u_f(f-f_\tau)] -(V_{f_0}[u_{f_0}(f'-f'_\tau)] - V_{f'}[u_{f'}(f'-f'_\tau)]) \|_{L^2}  \\
& \lesssim \|f-f'\|_{L^2} \lesssim \|\phi-\phi'\|_{L^2}
\end{align*}
and likewise that $\|w_{f', \psi} - w_{f', \psi'}\|_{L^2} \lesssim \|\Pi_{W_L}[\tilde \psi]-\Pi_{W_L}[\tilde \psi']\|_{L^2}$. Thus the set $\mathcal W =\{w_{f,\psi}: f \in D_\varepsilon, \psi \in \mathcal C(b)\}$ can be covered in $L^2$-distance by at most $(A'/\eta)^{c_0' 2^{Jd}}$ balls of radius $\eta$, and if $\sigma \ge \sup_{w \in \mathcal W}\|w\|_{L^2}$ then (\ref{dudmet2}) below applies to the Gaussian process $\{\langle \mathbb W, w\rangle_{L^2(\mathcal O)} : w \in \mathcal W\}$ which is sub-Gaussian for the metric $\|w_{f,\psi}-w_{f',\psi'}\|_{L^2}$. Conclude that
\begin{equation}\label{dudmodel}
E \sup_{f \in D_\varepsilon, \psi \in \mathcal C(b)} |\langle w_{f,\psi}, \mathbb W \rangle_{L^2(\mathcal O)}| \le C\left(\sigma + \int_0^\sigma 2^{Jd/2} \sqrt{2\log (A' /\eta)} d\eta\right).
\end{equation}
We see from Proposition \ref{sbaest} and Lemma \ref{pest} that 
\begin{equation*}
\|w_{f,\psi}\|_{L^2}  =  \|V_{f_0}[u_{f_0}(f-f_\tau)] - V_{f}[u_f(f-f_\tau)]\|_{L^2}  \lesssim \|f-f_0\|_{L^2} \|\tau\|_\infty 
\end{equation*}
a bound that will be seen to converge to zero at a polynomial rate in $\varepsilon$, uniformly in $w 
\in \mathcal W$.  By basic calculus (cf.~p.190 in \cite{GN16}) we can thus bound (\ref{dudmodel}) up to constants by  $\sup_{f \in D_\varepsilon, \psi \in \mathcal C(b)} 2^{Jd/2} \|f-f_0\|_{L^2} \|\tau\|_{\infty} \sqrt{\log(1/\varepsilon)}$. Next we turn to suprema of the Gaussian process $(\langle \mathbb W, w'_{f,\psi} \rangle_{L^2}: f \in D_\varepsilon, \psi \in \mathcal C(b)\}$. Using again the results from Section \ref{sop} we have
\begin{align*}
\|w'_{f,\psi}-w_{f', \psi}'\|_{L^2} &= \|V_{f_\tau}[(f-f_\tau)V_{f}[u_f(f_\tau-f)]] - V_{f'_\tau}[(f'-f'_\tau)V_{f'}[u_{f'}(f'_\tau-f')]]\|_{L^2} \\
& \lesssim \|f-f'\|_{L^2} \lesssim \|\phi-\phi'\|_{L^2},
\end{align*}
and $\|w'_{f',\psi}-w_{f', \psi'}'\|_{L^2}\lesssim \|\Pi_{W_L}(\tilde \psi- \tilde\psi')\|_{L^2}$ as well as
$$\|w_f'\|_{L^2} = \|V_{f_\tau}[(f-f_\tau)V_{f}[(f_\tau-f)u_f]]\|_{L^2} \lesssim \|\tau\|_\infty \|f_\tau-f\|_{L^2} \equiv \sigma,$$ so that repeating the arguments from above we obtain
\begin{equation*}
E \sup_{f \in D_\varepsilon, \psi \in \mathcal C(b)}|N(f,\psi)| \lesssim \sup_{f \in D_\varepsilon, \psi \in \mathcal C(b)} \varepsilon^{-1} 2^{Jd/2} (\|f-f_0\|_{L^2}+\|f_\tau-f\|_{L^2}) \|\tau\|_{\infty} \sqrt{\log(1/\varepsilon)}.
\end{equation*}
Using the definition of $\tau$ and that $s>2+d/2+\xi/2$,
\begin{align*}
&\varepsilon^{-1} 2^{Jd/2} \sqrt {\log (1/\varepsilon)} \delta_\varepsilon \|\varphi-\varphi_0\|_\infty \|f-f_0\|_{L^2} \\
&\lesssim \varepsilon^{(-2s-4-d-d+2d+2s-d-\xi+2s+\gamma_1)/(2s+4+d)} \log^{2\eta+1/2}(1/\varepsilon)=o(1),
\end{align*}
the same bound with $\|f-f_0\|_{L^2}$ replaced by $\delta_\varepsilon \|\phi-\phi_0\|_{L^2}$, and using (\ref{regtil1}) with $\beta=0$ in
$$ 2^{Jd/2} \sqrt {\log (1/\varepsilon)} \|\Pi_{W_L}\tilde \psi\|_\infty \|f-f_0\|_{L^2} \lesssim \varepsilon^{(-d-4+d+2s)/(2s+4+d)}\log^{\eta+1/2}(1/\varepsilon)=o(1),$$ 
and in
$$\varepsilon 2^{Jd/2} \sqrt {\log (1/\varepsilon)} \|\Pi_{W_L}\tilde \psi\|_\infty\|\Pi_{W_L}\tilde \psi\|_{L^2} = o(1).$$ All the preceding bounds being uniform in $f,\psi$ we obtain $\sup_{f \in D_\varepsilon, \psi \in \mathcal C(b)}|N(f,\psi)|=o_{P_{f_0}^Y}(1)$.

\smallskip

Using the above estimates for $R(f,\psi), N(f,\psi)$ the LAN expansion (\ref{lan1}) now simplifies to
\begin{align}\label{lan11}
&\ell(f)-\ell (f_\tau) \notag \\
&= \frac{1}{\varepsilon}\langle DG_{f_0}[f-f_\tau], \mathbb W \rangle_{L^2}  + \frac{1}{2\varepsilon^2} \|DG_{f_0}[f-f_\tau]\|_{L^2}^2 + \frac{1}{\varepsilon^2} \langle DG_{f_0}[f-f_0], DG_{f_0}(f-f_\tau) \rangle_{L^2} + Z_n \notag \\
&\equiv I + II + III +Z_\varepsilon 
\end{align}
where $Z_\varepsilon=o_{P_{f_0}^Y}(1)$ uniformly in $f \in D_\varepsilon, \psi \in \mathcal C(b)$, for any $\gamma>0$.

\medskip

\textit{B: Linearisation of the exponential perturbation }

We now linearise $f_\tau -f = f(1-e^\tau)$ in $\tau$. Since $\|\tau\|_\infty \to 0$ as $\varepsilon \to 0$ we assume $\|\tau\|_\infty <1/2$ in what follows. We have
$$DG_{f_0}[f-f_\tau] - DG_{f_0}[f_0 \tau] =DG_{f_0}[f-f_\tau-f \tau -(f_0-f)\tau].$$  Thus from  (\ref{smt}) and the triangle inequality we deduce
\begin{align*}
\|DG_{f_0}[f-f_\tau] - DG_{f_0}[f_0 \tau]\|^2_{L^2} &\le c_2\|f-f_\tau - f \tau\|^2_{(H^2_0)^*} + c_2\|(f_0-f)\tau\|^2_{(H^2_0)^*} \\
& \lesssim \|f\|^2_\infty \|\tau^2\|^2_{L^2} + \|(f_0-f)\tau \|^2_{L^2} \\
& \lesssim \|\tau\|_\infty^2( \|\tau\|_{L^2}^2 + \|f-f_0\|^2_{L^2} )
\end{align*}
which we shall use in the following estimates:
First, for term $II$ in (\ref{lan11}) we have
$$\left|II-\frac{1}{2\varepsilon^2} \|DG_{f_0}[f_0 \tau]\|_{L^2}^2\right| \le \frac{c'}{\varepsilon^2}  \|\tau\|_\infty^2 (\|\tau\|_{L^2}^2 + \|f-f_0\|^2_{L^2} ) =o(1)$$
using the definition of $\tau$ in (\ref{tau!}), (\ref{regtil}), (\ref{regtil1}) and $s>2$ in the bounds 
$$\|\Pi_{W_L}\tilde \psi \|^2_\infty  \|f-f_0\|_{L^2}^2 \lesssim \varepsilon^{(-8+2d+4s)/(2s+4+d)} \log^{2\eta}(1/\varepsilon)=o(1),$$ $$ \|\Pi_{W_L}\tilde \psi \|^2_{\infty} \varepsilon^2 \|\Pi_{W_L}\tilde \psi \|^2_{L^2} \lesssim \varepsilon^2 \varepsilon^{(-16+4d)/(2s+4+d)}= o(1),$$ $$\varepsilon^{-2}\delta_\varepsilon\| \Pi_{W_L}(\varphi-\varphi_0)\|_{\infty}^2\|f-f_0\|_{L^2}^2\lesssim \varepsilon^{-2} \varepsilon^{(4d+4s-2d+4s-2\xi)/(2s+4+d)} \log^{4\eta}(1/\varepsilon) =o(1),$$ and noting that the term involving $\delta_\varepsilon (\varphi-\varphi_0)$ in place of $f-f_0$ is of even smaller order of magnitude on $D_\varepsilon$. Similarly
\begin{align*}
&\left| III - \frac{1}{\varepsilon^2} \langle DG_{f_0}[f-f_0], DG_{f_0}(f_0\tau) \rangle_{L^2} \right| \lesssim \varepsilon^{-2} \|f-f_0\|_{(H^2_0)^*} \|\tau\|_\infty (\|\tau\|_{L^2}+\|f-f_0\|_{L^2}) \lesssim \\
& \varepsilon^{-2} \|f-f_0\|_{(H^2_0)^*} \big( \delta_\varepsilon \|\varphi-\varphi_0\|_\infty + \varepsilon \|\Pi_{W_L}\tilde \psi\|_\infty \big) \times \big(\|f-f_0\|_{L^2} + \delta_\varepsilon \|\varphi-\varphi_0\|_{L^2} + \varepsilon \|\Pi_{W_L}\tilde \psi\|_{L^2} \big)
\end{align*}
is $o(1)$ on $D_\varepsilon$ using $s>2$ and (\ref{regtil1}) in
$$\varepsilon^{-1} \|f-f_0\|_{(H^2_0)^*} \| \Pi_{W_L}\tilde \psi\|_{\infty} \|f-f_0\|_{L^2} \lesssim \varepsilon^{(-2s-4-d+4s+4+d-4)/(2s+4+d)} \log^{2\eta}(1/\varepsilon) =o(1),$$
$$\|f-f_0\|_{(H^2_0)^*}\|\Pi_{W_L}\tilde \psi \|_{\infty} \|\Pi_{W_L}\tilde \psi \|_{L^2}\lesssim \varepsilon^{(2s+4 -8+2d)/(2s+4+d)} \log^{\eta}(1/\varepsilon) = o(1),$$
and $s>2+d/2+\xi/2$ in 
\begin{align*}
&\varepsilon^{-2} \delta_\varepsilon \|f-f_0\|_{(H^2_0)^*} \|\varphi-\varphi_0\|_{\infty}\|f-f_0\|_{L^2}  \lesssim \varepsilon^{(-4s-8-2d +2d + 2s+4+2s-d -\xi + 2s)/(2s+4+d)} \log^{3\eta}(1/\varepsilon) \\
&= \varepsilon^{(2s-4-d-\xi)/(2s+4+d)} \log^{3\eta}(1/\varepsilon) =o(1),
\end{align*}
that $\delta_\varepsilon \|\varphi-\varphi_0\|_{L^2}$ is of smaller order of magnitude than $\|f-f_0\|_{L^2}$, and that a similar bound holds if in the last display $\|f-f_0\|_{L^2}$ is replaced by $\|\Pi_{W_L}\tilde \psi \|_{L^2}$. 

Finally for the stochastic term $I$ we have
$$\left|I- \frac{1}{\varepsilon}\langle DG_{f_0}[f_0 \tau], \mathbb W \rangle_{L^2}\right| \lesssim \varepsilon^{-1} |\langle DG_{f_0}[f-f_\tau - f \tau], \mathbb W \rangle | + \varepsilon^{-1} |\langle DG_{f_0}[(f_0-f)\tau], \mathbb W \rangle |$$ and we apply arguments as above (\ref{dudmodel}) to the Gaussian processes $\{\langle W, w_{f,\psi} \rangle_{L^2}: f = e^\varphi \in D_\varepsilon, \psi \in \mathcal C(b)\}$ now with $w_{f,\psi}$ equal to either $DG_{f_0}[f-f_\tau - f \tau]$ or $DG_{f_0}[(f_0-f)\tau]$: In both cases by $L^2$-continuity of $DG_{f_0}$ (see (\ref{smt})) and the properties of $f_\tau, \tau$ we can bound $$\|w_{f,\psi}-w_{f',\psi'}\|_{L^2} \lesssim \|\varphi-\varphi'\|_{L^2} + \|\Pi_{W_L}(\tilde\psi - \tilde \psi')\|_{L^2},$$ so that using (\ref{dudmet2}) as after (\ref{dudmodel}) gives, for some $\eta'>0$,
\begin{align*}
& \varepsilon^{-1} E\sup_{f \in D_\varepsilon, \psi \in \mathcal C(b)} |\langle DG_{f_0}[(f_0-f)\tau], \mathbb W \rangle | \\
&\lesssim \varepsilon^{-1} 2^{Jd/2} \sup_{f \in D_\varepsilon, \psi \in \mathcal C(b)} \|DG_{f_0}[(f_0-f)\tau]\|_{L^2} \sqrt{\log (1/\varepsilon)} \\
&\lesssim \varepsilon^{-1} 2^{Jd/2} \sup_{f \in D_\varepsilon, \psi \in \mathcal C(b)} (\delta_\varepsilon\|\varphi-\varphi_0\|_\infty + \varepsilon \|\Pi_{W_L}\tilde \psi\|_{\infty}) \|f-f_0\|_{L^2} \sqrt{\log (1/\varepsilon)} \\
& \lesssim \log^{\eta'}(1/\varepsilon)( \varepsilon^{(-2s-4-2d +2d +2s-d - \xi +2s)/(2s+4+d)} +  \varepsilon^{(-d-4+d+2s)/(2s+4+d)}) = o(1)
\end{align*}
since $s>2+d/2+\xi/2$ and likewise,
\begin{align*}
&\varepsilon^{-1} E \sup_{f \in D_\varepsilon, \psi \in \mathcal C(b)} |\langle DG_{f_0}[f-f_\tau - f \tau], \mathbb W \rangle | \\
& \lesssim \varepsilon^{-1} 2^{Jd/2} \sup_{f \in D_\varepsilon, \psi \in \mathcal C(b)} \|DG_{f_0}[f-f_\tau-f\tau]\|_{L^2} \sqrt{\log (1/\varepsilon)} \\
&\le  \varepsilon^{-1} 2^{Jd/2} \sup_{f \in D_\varepsilon, \psi \in \mathcal C(b)} \|\tau\|_\infty \|\tau\|_{L^2} \sqrt{\log (1/\varepsilon)} \\
&\le  \varepsilon^{-1}\sqrt{\log (1/\varepsilon)} 2^{Jd/2}\sup_{f \in D_\varepsilon, \psi \in \mathcal C(b)}( \delta_\varepsilon \|\varphi-\varphi_0\|_\infty + \varepsilon\|\Pi_{W_L}\tilde \psi \|_\infty) ( \delta_\varepsilon \|\varphi-\varphi_0\|_{L^2} + \varepsilon\|\Pi_{W_L}\tilde \psi \|_{L^2})
\end{align*}
is $o(1)$. So overall we obtain that (\ref{lan11}) (and then (\ref{lan1})) becomes
\begin{align}\label{lanlin}
&\ell(f)-\ell (f_\tau) \notag \\
& = \frac{1}{\varepsilon}\langle DG_{f_0}[f_0 \tau], \mathbb W \rangle_{L^2}  + \frac{1}{2\varepsilon^2} \|DG_{f_0}[f_0 \tau]\|_{L^2}^2 + \frac{1}{\varepsilon^2} \langle DG_{f_0}[f-f_0], DG_{f_0}(f_0 \tau) \rangle_{L^2} + \bar Z_\varepsilon
\end{align}
where $\bar Z_\varepsilon=o_{P_{f_0}^Y}(1)$ uniformly in $f \in D_\varepsilon, \psi \in \mathcal C(b)$, for any $\gamma>0$.

\medskip

\textit{C: Completion of the expansion (critical terms):}

Now recalling (\ref{tau!}) the right hand side of (\ref{lanlin}) can further be rewritten as
\begin{align}\label{lannex}
&t\langle DG_{f_0}[f_0 \Pi_{W_L}\tilde \psi], \mathbb W \rangle_{L^2}  + \frac{t^2}{2} \|DG_{f_0}[f_0 \Pi_{W_L}\tilde \psi]\|_{L^2}^2 + \frac{t}{\varepsilon} \langle DG_{f_0}[f-f_0], DG_{f_0}(f_0 \Pi_{W_L}\tilde \psi) \rangle_{L^2} \\
&~~ + \bar Z_\varepsilon +\bar Z_\varepsilon' \notag
\end{align}
where $\bar Z_\varepsilon'= o_{P_{f_0}^Y}(1)$ uniformly in $f \in D_\varepsilon, \psi \in \mathcal C(b)$, since
\begin{align*}
&|\bar Z_\varepsilon'| \le  \frac{\delta_\varepsilon}{\varepsilon} |\langle DG_{f_0}[f_0 \Pi_{W_L}(\varphi-\varphi_0)], \mathbb W \rangle_{L^2}|  \\
&~~~~+ \frac{\delta_\varepsilon^2}{2\varepsilon^2} \|DG_{f_0}[f_0\Pi_{W_L}(\varphi-\varphi_0)]\|_{L^2}^2 + \frac{\delta_\varepsilon}{\varepsilon^2} |\langle DG_{f_0}[f-f_0], DG_{f_0}(f_0\Pi_{W_L}(\phi-\phi_0)) \rangle_{L^2}| \\
& = A+B+C.
\end{align*}
For term $A$ we need to bound the supremum of the Gaussian process $\{\langle \mathbb W, q_\varphi \rangle_{L^2}: e^\varphi \in D_\varepsilon\}$ where $q_\varphi =DG_{f_0}[f_0 \Pi_{W_L}(\varphi-\varphi_0)]$: The $L^2$-continuity of $DG_{f_0}$ from (\ref{smt}) implies $\|DG_{f_0}[f_0 \Pi_{W_L}(\varphi-\varphi')]\|_{L^2} \lesssim \|f_0\|_\infty \|\varphi-\varphi'\|_{L^2}$ and thus arguing as after (\ref{dudmodel}) we obtain $$E\sup_{\phi: e^\phi \in D_\varepsilon} |\langle DG_{f_0}[f_0\Pi_{W_L}(\varphi-\varphi_0), \mathbb W\rangle_{L^2}| \lesssim  \sup_{\phi: e^\varphi \in D_\varepsilon} \|DG_{f_0}[f_0 \Pi_{W_L}(\varphi-\varphi_0)]\|_{L^2} 2^{Jd/2} \sqrt{\log (1/\varepsilon)}$$ which combined with previous bounds and Proposition \ref{diff} gives   
\begin{align*}
A&=O_{P_{f_0}^Y} \left(\varepsilon^{-1}\delta_\varepsilon \|\varphi-\varphi_0\|_{(H_0^2)^*}2^{Jd/2} \sqrt{\log (1/\varepsilon)} \right) \\
&= O_{P_{f_0}^Y} \left(\varepsilon^{(-2s-4-d+2d +\gamma_1 +2s+4-d)/(2s+4+d)} \log ^{\eta+1/2} (1/\varepsilon) \right) =o_P(1)
\end{align*} 
uniformly in $f \in D_\varepsilon$ for every $\gamma_1>0$. Likewise we see that $C=o_P(1)$ since for every $\gamma_1>0$,
$$O_P\left(\delta_\varepsilon \varepsilon^{-2} \|f-f_0\|_{(H^2_0)^*}\|\varphi-\varphi_0\|_{(H^2_0)^*} \right) = O_P\left(\varepsilon^{(2d+\gamma_1 -4s -8 -2d + 4s+8)/(2s+4+d)} \log ^{2\eta+1/2}(1/\varepsilon)\right),$$ and $B=o_P(1)$ follows too as $B$ is stochastically smaller than $C$ thanks to the extra $\delta_\varepsilon$ factor.

\smallskip

The last step is to pass $L \to \infty$ in the three main terms in (\ref{lannex}). We recall from Step II that $\tilde \psi \in C_K(\mathcal O)$. For $l$ large enough the boundary corrected wavelets do not intersect with the support $K$ of $\tilde \psi$ and the dual norm of $(H^2)^*$ is thus estimated by the dual norm of $(H^{2}(\mathbb R))^*$, just as in (\ref{intest}); we thus have, under assumption i) of Theorem \ref{work} with $L=J$, and by (\ref{mult}), (\ref{tildpdec}) that
\begin{align} \label{intest1}
\|DG_{f_0}[f_0(\Pi_{W_J}\tilde \psi -\tilde \psi)]\|^2_{L^2} & \lesssim \|f_0\|^2_{C^2}\|\Pi_{W_J}\tilde \psi -\tilde \psi\|^2_{(H^2_0)^*} \notag \\
&=  \sum_{l>J}2^{-4l} \sum_r \langle \tilde \psi, \Phi_{l,r} \rangle_{L^2(\mathbb R^d)}^2\\
& \le \sum_{l>J} 2^{-l(d+2\gamma)}  \le 2^{-J(d+2\gamma)} = o(1). \notag
\end{align}
By (\ref{tildp}) we have that $\{DG_{f_0}[f_0 \tilde \psi]=- V_{f_0}[S_{f_0}[S_{f_0}[\psi/u_{f_0}]]]= -S_{f_0}[\psi/u_{f_0}]: \psi \in \mathcal C(b)\}$ is bounded in $C_c^{d/2+\gamma}(\mathcal O)$ and has covering numbers bounded by Proposition \ref{pain}C. Moreover by $L^2$-continuity of $DG_{f_0}$ and arguing as before (\ref{dudmodel}), the covering 
numbers of $\{DG_{f_0}[f_0\Pi_{W_J}\tilde \psi]: \psi \in \mathcal C(b)\}$ are bounded by those of a ball in a $c_02^{Jd}$-dimensional space. The class of differences of such functions then has covering numbers bounded by the product of the  covering numbers of each class, and using (\ref{dudmet1}) and also (\ref{intest1}) to bound $\sigma$ we have, for any $\gamma>0$,
\begin{align*}
E \sup_{\psi \in \mathcal C(b)}|\langle DG_{f_0}[ f_0 (\Pi_{W_J}\tilde \psi-\tilde \psi)], \mathbb W \rangle_{L^2}| &\lesssim 2^{Jd/2} \int_0^\sigma \sqrt {\log(A/\eta)}d\eta + \int_0^\sigma (A'/\gamma)^{(d/2)/(\gamma+d/2)} d\gamma \\
&\lesssim 2^{Jd/2}2^{-J(d/2+\gamma)} \sqrt{\log 1/\varepsilon} + o(\sigma) =o(1).
\end{align*}
Finally using again the previous estimate for $\|\Pi_{W_J}\tilde \psi -\tilde \psi\|^2_{(H^2_0)^*}$ we obtain for every $\gamma>0$
\begin{align*}\frac{t}{\varepsilon} |\langle DG_{f_0}[f-f_0], DG_{f_0}(f_0(\Pi_{W_J}\tilde \psi - \tilde \psi) \rangle_{L^2}| &\lesssim \varepsilon^{-1} \|f_0\|_{C^2}  \|f-f_0\|_{(H^2_0)^*} \|\Pi_{W_J}(\tilde \psi - \tilde \psi)\|_{(H^2_0)^*}\\
&=O(\varepsilon^{-1} \varepsilon^{(2s+4+d+2\gamma)/(2s+4+d)} \log^{\eta}(\varepsilon^{-1})) =o(1)
\end{align*}
finishing the proof under assumption i). Under assumption ii) the proof of the last step proceeds analogously, with the previous estimates for $\|\Pi_{W_J}\tilde \psi -\tilde \psi\|^2_{(H^2_0)^*}$ replaced by the hypothesis featuring in Theorem \ref{work}ii).

\smallskip

To conclude, since $f_0 \tilde \psi = -\tilde \Psi$ the LAN expansion (\ref{lannex}) becomes 
\begin{equation} \label{lannex2}
\ell(f)-\ell(f_\tau) = -t\langle DG_{f_0}[\tilde \Psi], \mathbb W \rangle_{L^2}  + \frac{t^2}{2}\|DG_{f_0}[\tilde \Psi]\|_{L^2}^2 - \frac{t}{\varepsilon} \langle DG_{f_0}[f-f_0], DG_{f_0}[\tilde \Psi] \rangle_{L^2} + Z''_\varepsilon,
\end{equation}
for some $Z_\varepsilon''=o_{P_{f_0}^Y}(1)$ uniformly in $f \in D_\varepsilon$ and $\psi \in \mathcal C(b)$. Now using (\ref{invop}) we have
\begin{equation}
\langle DG_{f_0}[f-f_0], DG_{f_0}[\tilde \Psi] \rangle_{L^2} = \langle f-f_0, \psi \rangle_{L^2}
\end{equation}
and we can insert (\ref{lannex2}) into (\ref{laplace}) to obtain
\begin{equation} \label{lanfinal}
E^{\Pi^{D_\varepsilon}} \left[e^{\frac{t}{\varepsilon} \langle f -f_0, \psi \rangle_{L^2}} |Y\right] =  e^{-t\langle DG_{f_0}[\tilde \Psi], \mathbb W \rangle_{L^2}  + t^2\|DG_{f_0}[\tilde \Psi]\|_{L^2}^2} \times \frac{\int_{D_\varepsilon}  e^{\ell(f_\tau)} d\Pi(f)}{\int_{D_\varepsilon} e^{\ell(f)} d\Pi(f)} \times e^{R_\varepsilon}
\end{equation}
completing the proof.
\end{proof}

\medskip

\textbf{Step IV: Change of variables.}

\smallskip

We now analyse the ratio of high-dimensional integrals appearing on the r.h.s of (\ref{lanfinal}).

\begin{proposition} \label{pertu}
For every $\gamma>0$ and $\gamma_1$ (appearing in the definition of $\delta_\varepsilon$ in (\ref{dol})) satisfying $0<\gamma_1<2\gamma$ we have for the perturbation $f_\tau$ from (\ref{taud}) and any integer $L \le J$, as $\varepsilon \to 0$
\begin{equation}\label{crudebd}
\frac{\int_{D_\varepsilon}  e^{\ell(f_\tau)} d\Pi(f)}{\int_{D_\varepsilon} e^{\ell(f)} d\Pi(f)}= (1+o(1))\times \frac{\int_{D_{\varepsilon,\tau}} e^{\ell(f)} d\Pi(f)}{\int_{D_\varepsilon} e^{\ell(f)}d\Pi(f)} = O_{P_{f_0}^Y}(1),
\end{equation}
uniformly in $|t| \le T$ for any $T$ and in $\psi \in \mathcal C(b)$ from Theorem \ref{work}, and where $$D_{\varepsilon,\tau}=\{g=f_\tau = e^{\varphi +\tau}: f = e^\varphi \in D_\varepsilon\} \subset \supp(\Pi).$$ 
\end{proposition}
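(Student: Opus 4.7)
The plan is to realise the numerator $\int_{D_\varepsilon} e^{\ell(f_\tau)}\,d\Pi(f)$ via a linear change of variables that converts it into $\int_{D_{\varepsilon,\tau}} e^{\ell(f)}\,d\Pi(f)$ up to a Jacobian factor which I will show is $1+o(1)$, and then to reduce the second claim in (\ref{crudebd}) to the ratio $\Pi(D_{\varepsilon,\tau}|Y)/\Pi(D_\varepsilon|Y)\le 1/\Pi(D_\varepsilon|Y)$ and invoke the posterior concentration on $D_\varepsilon$ established in Step I. To this end I parametrise $\Pi$ via $f=e^\varphi$ with $\varphi = \sum_{l\le J,\,r\le N_l} b_{l,r} \Phi^\mathcal O_{l,r}$ and $(b_{l,r})$ independent uniform on $[-B_{l,r},B_{l,r}]$, $B_{l,r} := B\,2^{-l(s+d/2)}\bar l^{-2}$. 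In these coordinates the transformation $\varphi\mapsto \varphi+\tau(\varphi)$ induced by (\ref{tau!}) is the affine bijection
\begin{equation*}
b'_{l,r} = (1-\delta_\varepsilon) b_{l,r} + \delta_\varepsilon \langle\varphi_0,\Phi^\mathcal O_{l,r}\rangle_{L^2} + t\varepsilon \langle\tilde\psi,\Phi^\mathcal O_{l,r}\rangle_{L^2}
\end{equation*}
on the coordinates spanning $W_L$, and the identity on the remaining coordinates. It is an invertible linear (affine) map on $V_J$ with Jacobian determinant $(1-\delta_\varepsilon)^{d_L}$, where $d_L:=\dim W_L \le c_0\, 2^{Jd}$.

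The critical step is to verify $D_{\varepsilon,\tau}\subset \supp\Pi$, i.e.\ that the transformed coefficients satisfy $|b'_{l,r}|\le B_{l,r}$. Combining $|b_{l,r}|\le B_{l,r}$ with Condition \ref{interior} (which gives $|\langle\varphi_0,\Phi^\mathcal O_{l,r}\rangle_{L^2}|\le (B-\epsilon)B_{l,r}/B$) the triangle inequality yields
\begin{equation*}
|b'_{l,r}|\le (1-\delta_\varepsilon\epsilon/B)\, B_{l,r} + |t|\varepsilon\,|\langle\tilde\psi,\Phi^\mathcal O_{l,r}\rangle_{L^2}|,
\end{equation*}
so it suffices to dominate the second summand by $\delta_\varepsilon\epsilon B_{l,r}/B$. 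The decay estimate (\ref{tildpdec}), $|\langle\tilde\psi,\Phi^\mathcal O_{l,r}\rangle_{L^2}|\lesssim 2^{-l(d-2+\gamma)}$, which is uniform in $\psi\in\mathcal C(b)$, reduces the required bound to
\begin{equation*}
\frac{|t|\varepsilon}{\delta_\varepsilon}\, 2^{l(s-d/2+2-\gamma)}\bar l^{\,2} = o(1)\quad\text{uniformly in } l\le L\le J.
\end{equation*}
Inserting $2^J\simeq \varepsilon^{-2/(2s+4+d)}$ and $\delta_\varepsilon = \varepsilon^{(2d+\gamma_1)/(2s+4+d)}$, the worst case $l=J$ is of order $\varepsilon^{(2\gamma-\gamma_1)/(2s+4+d)}\log^2(1/\varepsilon)$, which is $o(1)$ precisely under the hypothesis $\gamma_1<2\gamma$.

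Once the support inclusion is secured, the change of variables gives
\begin{equation*}
\int_{D_\varepsilon} e^{\ell(f_\tau)}\,d\Pi(f) = (1-\delta_\varepsilon)^{-d_L}\int_{D_{\varepsilon,\tau}} e^{\ell(f)}\,d\Pi(f),
\end{equation*}
because on $D_{\varepsilon,\tau}$ the indicator constraints coming from both the original and the pushed-forward prior densities are automatically satisfied. The Jacobian factor satisfies
\begin{equation*}
(1-\delta_\varepsilon)^{-d_L} = \exp\bigl(O(\delta_\varepsilon\, 2^{Jd})\bigr) = \exp\bigl(O(\varepsilon^{\gamma_1/(2s+4+d)})\bigr) = 1+o(1),
\end{equation*}
giving the first equality in (\ref{crudebd}). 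For the second, I divide numerator and denominator by $\int_\mathcal F e^{\ell(f)}\,d\Pi(f)$ to recognise the ratio as $\Pi(D_{\varepsilon,\tau}|Y)/\Pi(D_\varepsilon|Y)\le 1/\Pi(D_\varepsilon|Y)$; by Step I (Proposition \ref{rateslate} and the localisation argument) the denominator tends to $1$ in $P_{f_0}^Y$-probability, so the ratio is $O_{P_{f_0}^Y}(1)$.

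The main obstacle is the interior-point verification in the middle paragraph: this is the only step in which Condition \ref{interior}, the wavelet decay (\ref{tildpdec}) of $\tilde\psi$, and the precise calibration of $\delta_\varepsilon$ all interact, and in which the requirement $\gamma_1<2\gamma$ is genuinely used. As hinted at in Remark \ref{lesson}, it is exactly this constraint that limits the directions $\psi$ along which the perturbation argument is admissible, and thereby underlies the regularity assumption $\alpha>2+3d/2$ of Theorem \ref{main}.
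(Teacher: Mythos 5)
Your proposal is correct and follows essentially the same route as the paper: the paper's proof performs the same affine change of variables (phrased as computing the density $d\pi_\tau/d\pi=(1-\delta_\varepsilon)^{-\dim W_L}=1+o(1)$), isolates the same interior-point verification as its Lemma \ref{intpt} with the identical estimate $\varepsilon\delta_\varepsilon^{-1}2^{l(s+2-d/2-\gamma)}\bar l^2=O(\varepsilon^{(2\gamma-\gamma_1)/(2s+4+d)}\log^2(1/\varepsilon))$, and concludes by the same renormalisation to $\Pi(D_{\varepsilon,\tau}|Y)/\Pi(D_\varepsilon|Y)\le 1/\Pi(D_\varepsilon|Y)=O_{P_{f_0}^Y}(1)$.
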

\begin{proof}
Let us write $\bar \ell (\varphi) = \ell (f(\varphi))$ when viewing the function $\ell(f), f=e^\varphi,$ as a map of $\varphi$. Thus  $\ell(f_\tau) = \bar \ell (\varphi + \tau)$, and so 
\begin{equation}\label{ratio}
\frac{\int_{D_\varepsilon}  e^{\ell(f_\tau)} d\Pi(f)}{\int_{D_\varepsilon} e^{\ell(f)} d\Pi(f)} = \frac{\int_{D_\varepsilon}  e^{\bar \ell(\phi + \tau)} d\pi(\varphi)}{\int_{D_\varepsilon} e^{\bar \ell(\varphi)} d\pi(\varphi)},
\end{equation}
where $\pi$ is the law of $\varphi = \log (f), f \sim \Pi$, and where in what follows, in abuse of notation, the integration domain $D_\varepsilon$ is viewed interchangeably as a set of $f$'s or of $\phi$'s. By definition of the prior $\Pi$ and (\ref{wavprop}) the $d\pi$-integrals are product integrals over hyper-ellipsoids $$\supp(\pi)=\prod_{l \le J} I_l^{N_l} \subset \mathbb R^{\bar c_02^{Jd}},~~N_l \le  c_0 2^{ld},~\bar c_0 <\infty,$$ with each marginal distribution being the uniform distribution on the intervals $$I_l = [-B2^{-l(s+d/2)} \bar l^{-2}, B2^{-l(s+d/2)}\bar l^{-2}].$$ By definition of $\tau$ in (\ref{tau!}) we have the orthogonal decomposition
\begin{equation}\label{convcomb}
\varphi + \tau = \Pi_{V_J \setminus W_L}\varphi + (1-\delta_\varepsilon)\Pi_{W_L}\varphi + \delta_\varepsilon p(\tilde \psi),~~\text{ where }~~p(\tilde \psi) = \frac{t \varepsilon}{\delta_\varepsilon} \Pi_{W_L}(\tilde \psi) + \Pi_{W_L}\varphi_0,
\end{equation}
thus the perturbation $\tau$ affects only the subspaces $W_L$ in the support of the prior, and in these subspaces it equals a convex combination of points $\varphi$ in the support with the point $p(\tilde \psi)$ which, as the next lemma shows, is also contained in $\supp(\pi)$.
\begin{lemma}\label{intpt}
Let $L \le J$ and $\psi \in \mathcal C(b), |t| \le T$. Then for every $\gamma>0$ and $0<\gamma_1<2\gamma$  we have that $p(\tilde \psi) \in \supp(\pi)$ for all $\varepsilon$ small enough.
\end{lemma}
\begin{proof}
By Condition \ref{interior} we know that $\varphi_0$ is an interior point satisfying $$|\langle \varphi_0, \Phi^\mathcal O_{l,r}\rangle| \le (B-\epsilon)2^{-l(s+d/2)} \bar l^{-2}$$ for some $\epsilon>0$ and all $l,r$. Thus to show that $p(\tilde \psi) \in \supp (\pi)$ it suffices to verify that $$\frac{t \varepsilon}{\delta_\varepsilon}|\langle \tilde \psi, \Phi^\mathcal O_{l,r} \rangle| \le \epsilon 2^{-l(s+d/2)}\bar l^{-2}$$ for all $l \le J,r$. Using (\ref{tildpdec}) we check that for our choice of $\delta_\varepsilon, J$ and $\gamma>\gamma_1/2$,
\begin{align*}
\max_{l\le J,r}2^{l(s+d/2)}\bar l^2\frac{\varepsilon}{\delta_\varepsilon} |\langle \tilde \psi, \Phi^\mathcal O_{l,r} \rangle| &\le \max_{l\le J, r}2^{l(s+2-d/2-\gamma)}  \bar l^{2}\varepsilon \varepsilon^{-(2d+\gamma_1)/(2s+4+d)}  \\
&\lesssim \varepsilon^{(-2s-4+d +2s+4-d+2\gamma-\gamma_1)/(2s+4+d)} \log^2(1/\varepsilon)< \epsilon
\end{align*}
for $\varepsilon$ small enough.
\end{proof}
We deduce from the lemma and (\ref{convcomb}) that by convexity of $\supp \pi$ we must have $\varphi+\tau  \in \supp(\pi)$ too. If $\pi_\tau$ denotes the law of $\varphi + \tau$ then obviously on $V_J \setminus W_L$ we have $\pi_\tau=\pi$ and for the marginal coordinates of the subspace $W_L$ the densities of the law  $\pi_\tau$ with respect to the law $\pi$ equal the constant  $(1-\delta_\varepsilon)^{-1}$ on a strict subinterval $\tilde I_l \subset I_l$ (they are the densities of $(1-\delta_\varepsilon)U+\delta_\varepsilon p$ for $U$ a uniform random variable and $p$ a constant). The density of the product integrals is then also constant and on the support of $\pi_\tau$ given by
\begin{equation} \label{deltac}
\frac{d\pi_\tau}{d\pi}=\prod_{l\le L}\left(\frac{1}{1-\delta_\varepsilon}\right)^{N^l_0} = \left(\frac{1}{1-\delta_\varepsilon}\right)^{\bar c2^{Jd}} = 1+o(1), \bar c>0,
\end{equation}
where $N^l_0\le N_l$ is the dimension of $W_l \setminus W_{l-1}$ and where we have used the definition of $\delta_\varepsilon$ from (\ref{dol}) in $$2^{Jd} \delta_\varepsilon = \varepsilon^{(-2d+2d+\gamma_1)/(2s+4+d)}   \to 0$$ as $\varepsilon \to 0$ for any $\gamma_1>0$. We can thus write the l.h.s. of (\ref{crudebd}) as
\begin{equation*}
\frac{\int_{D_{\varepsilon, \tau}}  e^{\ell(f(\varphi))} \frac{d\pi_\tau(\varphi)}{d\pi(\varphi)} d\pi(\varphi)}{\int_{D_\varepsilon} e^{\ell(f(\varphi))} d\pi(\varphi)}=(1+o(1))\times \frac{\int_{D_{\varepsilon,\tau}} e^{\ell(f)} d\Pi(f)}{\int_{D_\varepsilon} e^{\ell(f)}d\Pi(f)}
\end{equation*}
where $$D_{\varepsilon,\tau}=\{g=f_\tau = e^{\varphi +\tau}: f = e^\varphi \in D_\varepsilon\} \subset \supp (\Pi),$$  completing the proof of the first identity in the proposition. Renormalising  by $\int_\mathcal F e^{\ell(f)}d\Pi(f)$ the last ratio equals $$\frac{\Pi(D_{\varepsilon,\tau}|Y)}{\Pi(D_{\varepsilon}|Y)} \le \frac{1}{\Pi(D_{\varepsilon}|Y)}=O_{P_{f_0}^Y}(1)$$ using also $\Pi(D_{\varepsilon}|Y) \to 1$ in $P_{f_0}^Y$-probability (Step I above), completing the proof.
\end{proof}

\medskip

\textbf{Step V: Convergence of finite-dimensional distributions}

\smallskip

For any $\psi \in C^4_K(\mathcal O)$ and $\mathbb W= Y-u_{f_0}$ define the random variables
\begin{equation} \label{centr}
\tilde f (\psi)   = \langle \tilde f, \psi \rangle_{L^2} =  \langle f_0, \psi \rangle_{L^2} - \varepsilon \langle DG_{f_0}[\tilde \Psi], \mathbb W \rangle_{L^2}
\end{equation}
which in view of Proposition \ref{gauss0} and the results from Section \ref{infbd} define the random variable
\begin{equation} \label{tildef}
\tilde f = \tilde f(Y) =^\mathcal L f_0 + \varepsilon \tilde X, ~~\tilde X \sim \mathcal N_{f_0},
\end{equation}
in $(C^{\alpha}_K(\mathcal O))^*$. Let further $\tilde \Pi(\cdot|Y)$ be the law of $\varepsilon^{-1}(f-\tilde f)|Y$ in $(C^{\alpha}_K(\mathcal O))^*$ conditional on $Y$.

\begin{proposition}\label{fidi}
For any finite vector $(\psi_1, \dots, \psi_k), k \in \mathbb N,$ of functions $\psi_i \in C^S_K(\mathcal O)$, $S \in \mathbb N$ as in Condition \ref{interior}, and any random variable $Z$ of law $\mu$ in $(C^{\alpha}_K(\mathcal O))^*$, let $\mu_{k}$ be the distribution of the random vector $(Z(\psi_1), \dots, Z(\psi_k))$ in $\mathbb R^k$. Then for $\beta=\beta_{\mathbb R^k}$ the bounded-Lipschitz distance for weak convergence of probability measures on $\mathbb R^k$ and every fixed $k \in \mathbb N$, we have
$$\beta (\tilde \Pi(\cdot|Y)_k, (\mathcal N_{f_0})_k) \to^{P_{f_0}^Y} 0~\text{as } \varepsilon \to 0.$$ 
\end{proposition}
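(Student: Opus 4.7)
\emph{Reduction.} For $c\in\mathbb R^k$ set $\psi_c=\sum_i c_i\psi_i\in C^S_K(\mathcal O)$, so that $\sum_i c_i\tilde Z(\psi_i)=\tilde Z(\psi_c)$ with $\tilde Z(\psi):=\varepsilon^{-1}\langle f-\tilde f,\psi\rangle_{L^2}$, and analogously $\sum_i c_i X(\psi_i)=X(\psi_c)$ under $\mathcal N_{f_0}$. It is enough to show, for every $\psi\in C^S_K(\mathcal O)$ and every $t\in\mathbb R$,
\[
 E^{\Pi(\cdot|Y)}\bigl[e^{t\tilde Z(\psi)}\bigr]\to^{P_{f_0}^Y} e^{(t^2/2)\sigma^2(\psi)},\qquad \sigma^2(\psi)=\|S_{f_0}[\psi/u_{f_0}]\|_{L^2}^2,
\]
since pointwise-in-$t$ convergence in probability of MGFs to the MGF of a Gaussian implies weak convergence in probability by a standard subsequence and continuity theorem argument, and Cram\'er--Wold in probability then lifts the scalar statement to the joint BL-metric convergence in $\mathbb R^k$ asserted in the proposition.

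\emph{Laplace transform expansion.} By (\ref{tvlim}) the total variation distance between $\Pi(\cdot|Y)$ and $\Pi^{D_\varepsilon}(\cdot|Y)$ is $o_{P_{f_0}^Y}(1)$, hence so is the difference of the corresponding MGFs of $\tilde Z(\psi)$; one may therefore work with $\Pi^{D_\varepsilon}$. From the definition (\ref{centr}) of $\tilde f$,
\[
 E^{\Pi^{D_\varepsilon}}\bigl[e^{t\tilde Z(\psi)}\mid Y\bigr]= e^{t\langle DG_{f_0}[\tilde\Psi],\mathbb W\rangle_{L^2}}\cdot E^{\Pi^{D_\varepsilon}}\bigl[e^{(t/\varepsilon)\langle f-f_0,\psi\rangle_{L^2}}\mid Y\bigr].
\]
Applying Theorem~\ref{work} with $L=J$ (alternative~i) to the second factor generates the inverse stochastic exponential, and since $\|DG_{f_0}[\tilde\Psi]\|_{L^2}^2=\sigma^2(\psi)$ by (\ref{invop}) and the display following it, the above equals
\[
 e^{(t^2/2)\sigma^2(\psi)}\cdot \mathcal R_\varepsilon\cdot(1+o_{P_{f_0}^Y}(1)),
\]
where $\mathcal R_\varepsilon$ is the ratio of integrals in (\ref{crudebd}). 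The whole task then reduces to $\mathcal R_\varepsilon\to 1$ in $P_{f_0}^Y$-probability.

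\emph{The ratio.} Proposition~\ref{pertu} expresses $\mathcal R_\varepsilon=(1+o(1))\Pi(D_{\varepsilon,\tau}|Y)/\Pi(D_\varepsilon|Y)$; the denominator tends to $1$ by Step~I, so one must prove $\Pi(D_{\varepsilon,\tau}|Y)\to 1$. I would argue by a sandwich whose key input is
\[
 \|\tau\|^{(i)}=o(r(i,\varepsilon)),\qquad i=1,2,3,
\]
uniformly in $|t|\le T$, $\psi\in\mathcal C(b)$ and $f=e^\phi\in D_\varepsilon$; this follows from (\ref{regtil}), (\ref{regtil1}) with $\beta=0$, the operator bound (\ref{iota}) on $\Pi_{W_L}$ and the choice (\ref{dol}) of $\delta_\varepsilon$ (for instance when $i=1$, $\varepsilon\|\Pi_{W_L}\tilde\psi\|_{L^2}\lesssim\varepsilon\cdot 2^{J(2-d/2)}\simeq\varepsilon^{(2s+2d)/(2s+4+d)}=o(r(1,\varepsilon))$). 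By the triangle inequality, $D_{\varepsilon,\tau}$ is then sandwiched between $D_\varepsilon^{M-\kappa_\varepsilon}\cap\supp(\Pi)$ and $D_\varepsilon^{M+\kappa_\varepsilon}$ for some deterministic $\kappa_\varepsilon=o(1)$, while the interior-point argument of Lemma~\ref{intpt} ensures that the $\pm\tau$-shift preserves the hypercube support of $\Pi$. Choosing $M$ in (\ref{dn}) sufficiently large that $M-\kappa_\varepsilon$ still falls in the posterior-concentration regime of Step~I, we conclude $\Pi(D_\varepsilon^{M-\kappa_\varepsilon}|Y)\to 1$ and hence $\Pi(D_{\varepsilon,\tau}|Y)\to 1$ in probability, giving $\mathcal R_\varepsilon\to 1$ as required.

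\emph{Main obstacle.} The principal difficulty is precisely this upgrade from the $O_P(1)$ bound on $\mathcal R_\varepsilon$ supplied by Proposition~\ref{pertu} to $1+o_P(1)$: one must tightly control $\|\tau\|^{(i)}$ against $r(i,\varepsilon)$ simultaneously in the three different topologies of (\ref{dn}) and carefully verify the interior-point argument near the boundary of $\supp(\Pi)$. All the remaining pieces --- the expansion from Theorem~\ref{work}, the cancellation of the stochastic exponential, MGF-to-weak-convergence, and the Cram\'er--Wold step --- are essentially bookkeeping once $\mathcal R_\varepsilon\to 1$ is established.
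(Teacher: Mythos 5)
Your overall architecture (reduce to convergence of Laplace transforms of $\varepsilon^{-1}\langle f-\tilde f,\psi\rangle$, invoke Cram\'er--Wold, and boil everything down to showing the ratio $\mathcal R_\varepsilon\to 1$) is the same as the paper's, and you correctly identify the ratio as the crux. But your treatment of that crux has a genuine gap, in two related places. First, you apply Theorem \ref{work} with $L=J$ (alternative i), whereas the paper deliberately uses alternative ii) with the much smaller cut-off $2^{L}\simeq\varepsilon^{-(2s/(s+d/2))/(2s+4+d)}/\log^{\gamma_0}(1/\varepsilon)$, compensating for the truncation via the hypothesis $\|\Pi_{W_L}\tilde\Psi-\tilde\Psi\|_{(H^2_0)^*}=O(\varepsilon^{\bar\gamma/(2s+4+d)})$, $\bar\gamma>d$, which is exactly why $\psi$ is required to lie in $C^S_K$ with $S>2+s+d$. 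The small $L$ is not optional: it is forced by the step you then gloss over.

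Second, your ``sandwich'' is not valid. The set $D_{\varepsilon,\tau}$ is the \emph{image} of $D_\varepsilon$ under the affine map $T:\varphi\mapsto\varphi+\tau(\varphi)=\Pi_{V_J\setminus W_L}\varphi+(1-\delta_\varepsilon)\Pi_{W_L}\varphi+\delta_\varepsilon p(\tilde\psi)$, so showing $\bar f=e^{\bar\varphi}\in D_{\varepsilon,\tau}$ requires exhibiting a preimage $\varphi^*=T^{-1}\bar\varphi$ (the paper's (\ref{deconvex})) that lies both in $D_\varepsilon^M$ \emph{and} in $\supp\pi$; closeness of $\tau$ to zero in the norms $\|\cdot\|^{(i)}$ does not give set containment. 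Lemma \ref{intpt} only provides the forward direction ($T$ maps $\supp\pi$ into $\supp\pi$, by convexity); the inverse $T^{-1}$ is a dilation by $(1-\delta_\varepsilon)^{-1}>1$ about $p(\tilde\psi)$ on $W_L$, and for $\bar\varphi$ with a coefficient at the boundary of the hypercube it pushes that coefficient outside $\supp\pi$ unless one anchors to $\varphi_0$'s interior coefficients and absorbs the whole deviation $|\langle\bar\varphi-\varphi_0,\Phi^{\mathcal O}_{l,r}\rangle|+\varepsilon|t||\langle\tilde\psi,\Phi^{\mathcal O}_{l,r}\rangle|$ into the slack $\epsilon2^{-l(s+d/2)}\bar l^{-2}$ of Condition \ref{interior}. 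Bounding $|\langle\bar\varphi-\varphi_0,\Phi^{\mathcal O}_{l,r}\rangle|$ by the posterior contraction rate $\|\bar\varphi-\varphi_0\|_{L^2}\lesssim\varepsilon^{2s/(2s+4+d)}\log^\eta(1/\varepsilon)$, this requires $2^{L(s+d/2)}L^{2+\eta}\varepsilon^{2s/(2s+4+d)}=o(1)$ --- which \emph{fails} for $L=J$, since $2^{J(s+d/2)}\varepsilon^{2s/(2s+4+d)}\simeq\varepsilon^{-d/(2s+4+d)}\to\infty$. So with your choice of $L$ the argument that $\Pi(D_{\varepsilon,\tau}|Y)\to1$ breaks down precisely at the boundary of the prior's support; the paper's two-tier construction (small $L$ plus Theorem \ref{work}(ii)) exists to resolve exactly this obstruction.
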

\begin{proof}
In view of (\ref{tvlim}) and since the total variation distance dominates any metric for weak convergence, it suffices to prove the result for $\Pi(\cdot|Y)$ replaced by $\Pi^{D_\varepsilon}(\cdot|Y)$.  We first need

\begin{lemma} Fix arbitrary $\psi \in C^S_c(\mathcal O)$. Let $\tau$ from $(\ref{tau!})$ be the perturbation in $W_L$ associated to $\psi$ with $L$ satisfying $2^{L(s+d/2)} L^{2+\eta} \varepsilon^{2s/(2s+4+d)} = o(1)$, and let $D_{\varepsilon,\tau}$ be as in Proposition \ref{pertu}. Then we have, as $\varepsilon \to 0$ and in $P_{f_0}^Y$-probability, $$\Pi(\mathcal D_{\varepsilon, \tau} |Y) \to 1,~~\text{and }~ \frac{\int_{D_{\varepsilon,\tau}} e^{\ell(f)} d\Pi(f)}{\int_{D_\varepsilon} e^{\ell(f)}d\Pi(f)} \to 1.$$
\end{lemma}
\begin{proof} 
Recall that by our choice of $M_0$ in the paragraph above (\ref{tvlim}) we have $\Pi(\mathcal D_\varepsilon^{M_0/8W}|Y) \to 1$ in $P_{f_0}^Y$-probability. We show that $\mathcal D_{\varepsilon, \tau}=\mathcal D_{\varepsilon, M, \tau}$ contains the set $\mathcal D_{\varepsilon}^{M_0/8W}$ for all $\varepsilon$ small enough so that the first limit of the lemma will follow. By definition of $\mathcal D_{\varepsilon, M, \tau}$ and (\ref{convcomb}) we need to show that for every $\bar f \in \mathcal D_{\varepsilon}^{M_0/8W}$ the function $\bar \phi = \log \bar f$ equals $$\phi^* +\tau=\Pi_{V_J\setminus W_L}\varphi^* + (1-\delta) \Pi_{W_L}\varphi^* + \delta \Pi_{W_L}(\varphi_0) + \varepsilon t \Pi_{W_L}(\tilde \psi) $$ for some $\varphi^*$ such that $e^{\varphi^*} \in \mathcal D_{\varepsilon}^{M}$. Define
\begin{equation} \label{deconvex}
\varphi^* = \Pi_{V_J \setminus W_L} \bar \phi + \Pi_{W_L}(\varphi_0) + \frac{\Pi_{W_L} \bar \varphi - \Pi_{W_L}(\varphi_0) - \varepsilon t \Pi_{W_L}(\tilde \psi)}{1-\delta_\varepsilon}.
\end{equation}
Then by (\ref{convcomb}) we have $$\varphi^* + \tau = \Pi_{V_J \setminus W_L} \bar \phi +  \Pi_{W_L}\bar \phi =\bar \phi$$ so it remains to show that $e^{\varphi^*}$ is indeed in $\mathcal D_{\varepsilon}^{M}.$ To achieve this we need to show $\varphi^* \in \supp \pi$ and that the $\|\cdot\|^{(i)}$-norm inequalities in (\ref{dn}) are satisfied. Since $\bar \varphi \in V_J$ we have $$\|\varphi^* - \varphi_0\|^{(i)} \le \|\Pi_{W_L}(\varphi^*-\varphi_0)\|^{(i)} + \|(Id-\Pi_{W_L})(\bar \varphi - \varphi_0)\|^{(i)}$$ and the second term is less than $ [(M_0/8W)+ (M_0/8)] r(i, \varepsilon) \le M_0/4$ since $e^{\bar \varphi} \in  \mathcal D_{\varepsilon}^{M_0/8W}$ and using (\ref{iota}). For the first we use (\ref{iota}) once more and have for $\delta_\varepsilon$ and $\varepsilon$ small enough 
\begin{align*}
\|\Pi_{W_L}(\varphi^*-\varphi_0)\|^{(i)} &\le W(1-\delta_\varepsilon)^{-1} \left[\|\bar \varphi -\varphi_0\|^{(i)} + t \varepsilon \|\Pi_{W_L}(\tilde \psi)\|^{(i)}\right] \\
&\le (M_0/4) r(i, \varepsilon) + o(r(i,\varepsilon)) \le (M_0/2) r(i,\varepsilon),
\end{align*}
using $e^{\bar \phi} \in \mathcal D_{\varepsilon}^{M_0/8W}$ and $\varepsilon \|\tilde \psi\|^{(i)}=O(\varepsilon)$ for any fixed $\psi \in C^S_c(\mathcal O), S>4,$ by definition of $\tilde \psi$. Finally we also have $\varphi^* \in \supp \Pi$: this is clear for $l > L$ as then $\phi^*=\bar \phi$, and also for $l \le L$ since then, using Condition \ref{interior} and $\tilde \psi \in L^2$ in view of $S>4$,
\begin{align*}
&2^{l(s+d/2)} \bar l^2 |\langle \varphi^*, \Phi^\mathcal O_{l,r} \rangle| \\
&\le 2^{l(s+d/2)}\bar l^2 |\langle \varphi_0, \Phi^\mathcal O_{l,r} \rangle_{L^2}| + (1-\delta_\varepsilon)^{-1} 2^{l(s+d/2)} \bar l^2\left[|\langle \bar \varphi - \varphi_0, \Phi^\mathcal O_{l,r} \rangle_{L^2}| + \varepsilon t |\langle \tilde \psi, \Phi^\mathcal O_{l,r}\rangle_{L^2}| \right] \\
& \le (B-\epsilon) + (4/3) 2^{L(s+d/2)}L^2( \|\bar \varphi - \varphi_0\|_{L^2} + O(\varepsilon)) \le B
\end{align*}
for $\varepsilon$ small enough, using that $\bar \phi \in \mathcal D_{\varepsilon}^{M_0/8W}$ implies $\|\bar \varphi - \varphi_0\|_{L^2} =  O(\varepsilon^{2s/(2s+4+d)} \log^\eta(1/\varepsilon))$ and the hypothesis on $L$. This proves the first claim of the lemma. For the second we can renormalise both numerator and denominator by $\int_\mathcal F e^{\ell(f)} d\Pi(f)$ to obtain
\begin{equation} \label{renormo}
\frac{\int_{D_{\varepsilon,\tau}} e^{\ell(f)} d\Pi(f)}{\int_{D_\varepsilon} e^{\ell(f)}d\Pi(f)} = \frac{\Pi(D_{\varepsilon, \tau}|Y)}{\Pi(D_\varepsilon|Y)} \to^{P_{f_0}^Y} 1.
\end{equation}
\end{proof}

We now are ready to combine this lemma for the choice $$2^L \simeq \varepsilon^{-(2s/(s+d/2))/(2s+4+d)} /\log^{\gamma_0} (1/\varepsilon), L \le J,$$ $\gamma_0$ a large enough constant, with Theorem \ref{work}ii) for arbitrary but fixed $\psi \in C^S_K(\mathcal O)$: Since $\psi \in C^S_K(\mathcal O)$ we have $\tilde \psi \in C^{S-4}_K(\mathcal O)$, and since for large $l$ the support $K$ of $\tilde \psi$ does not overlap with the support of the boundary corrected wavelets, we have as in (\ref{intest}) $$\|\Pi_{W_L}(\tilde \psi) - \tilde \psi\|_{(H^2_0)^*} \lesssim 2^{-L(S-2)} \lesssim \varepsilon^{[(S-2)(2s/(s+d/2)]/(2s+4+d)} \log^{\gamma_0'} (1/\varepsilon) =  O(\varepsilon^{\bar \gamma/(2s+4+d)})$$ for some $\bar \gamma>d$ by assumption on $S,s$. Thus Theorem \ref{work}ii and the previous lemma imply, by definition of $\tilde f$ and for   $r_\varepsilon = o_{P_{f_0}^Y}(1)$ that
$$E^{D_\varepsilon}[ e^{t \varepsilon^{-1} \langle f - \tilde f, \psi \rangle}|Y] = e^{r_\varepsilon} e^{\frac{t^2}{2}\|DG_{f_0}[\tilde \Psi]\|_{L^2}^2}, ~\forall t \in \mathbb R,$$ for all $\psi \in C^S_K(\mathcal O)$. Applying this to any linear combination $\psi = \sum_{i \le k} a_i \psi_i, a_i \in \mathbb R,$ which still defines an element of $C^S_K(\mathcal O)$, the convergence of the Laplace transform and the Cramer-Wold device (Section \ref{weakprob}) gives the desired weak convergence in probability in the $\beta_{\mathbb R^k}$-metric. 
\end{proof}

\textbf{Step VI: Tightness estimates and convergence in function space}

\smallskip

We now prove Theorem \ref{main} with $\tilde f$ from (\ref{tildef}) in place of $\bar f$. That $\tilde f$ can be replaced by $\bar f$ will constitute the last step VII. By (\ref{tvlim}) and since the total variation distance dominates any metric for weak convergence, it suffices to prove Theorem \ref{main} for $\Pi^{D_\varepsilon}(\cdot|Y)$ replacing $\Pi(\cdot|Y)$. Let $f \sim \Pi^{D_\varepsilon}(\cdot|Y)$ conditionally on $Y$ and consider the stochastic process  $$(X_1(\psi) = \varepsilon^{-1}(\langle f,\psi \rangle_{L^2} - \tilde f(\psi)): \psi \in C^\alpha_K(\mathcal O))$$ whose law on $(C^\alpha_K(\mathcal O))^*$ we denote by $\tilde \Pi^{D_\varepsilon}(\cdot|Y).$ Further let $X_2 \sim \mathcal N_{f_0}$. For $\lambda \in \mathbb N$ large enough to be chosen below and finite-dimensional spaces $W_{\lambda}$ with projection operator $\Pi_{W_{\lambda}}$ as in Step II, define probability measures $\tilde \Pi^{D_\varepsilon}_{\lambda}(\cdot|Y), \mathcal N_{f_0,\lambda}$ as the laws of the stochastic processes $$P_{\lambda}(X_i)\equiv(X_i(\Pi_{W_{\lambda}}\psi): \psi \in C^\alpha_K(\mathcal O)), i=1,2,$$ respectively, which, as projections, are defined on the same probability space as the $X_i$'s. Then using the triangle inequality for the metric $\beta=\beta_{(C^\alpha_K(\mathcal O))^*}$,
\begin{align}  \label{blconv}
\beta(\tilde \Pi^{D_\varepsilon}(\cdot|Y), \mathcal N_{f_0}) & \le \beta (\tilde \Pi^{D_\varepsilon}_{\lambda}(\cdot|Y), \mathcal N_{f_0,\lambda}) + \beta(\tilde \Pi^{D_\varepsilon}(\cdot|Y), \tilde \Pi^{D_\varepsilon}_{\lambda}(\cdot|Y)) + \beta(\mathcal N_{f_0}, \mathcal N_{f_0, \lambda}) \notag \\
&\notag = \beta (\tilde \Pi^{D_\varepsilon}_{\lambda}(\cdot|Y), \mathcal N_{f_0,\lambda}) + \sum_{i=1}^2\sup_{\|F\|_{Lip}\le 1}|E[F(X_i) -F(P_{\lambda} (X_i))]|  \\
&\le \beta_{W_{{\lambda}}} (\tilde \Pi_\lambda^{D_\varepsilon}(\cdot|Y), (\mathcal N_{f_0})_{\lambda}) +  \sum_i  E\|X_i - P_{\lambda}(X_i)\|_{(C^\alpha_K(\mathcal O))^*} = A+B+C,
\end{align}
and we wish to show that the last three terms converge to zero in $P_{f_0}^Y$-probability. 

For $B$, still writing $E=E^{\Pi^{D_\varepsilon}}[\cdot|Y]$, we have from Parseval's identity, (\ref{holdwav}), for `interior' wavelets $\Phi_{l,r}^\mathcal O=\Phi_{l,r}, l \ge \lambda$ large enough, and for some $\gamma>0$ such that $\alpha'=2+d/2+\gamma< \alpha-d$,
\begin{align} \label{normo}
&E\|X_1-P_{\lambda}(X_1)\|_{(C^\alpha_K(\mathcal O))^*}  = E \sup_{\|\psi\|_{C^\alpha_K(\mathcal O)} \le 1} \varepsilon^{-1} \left | \langle f- \tilde f, \psi - \Pi_{W_{\lambda}}\psi \rangle_{L^2}\right| \\
& \lesssim E \sum_{{\lambda}<l,r} 2^{-l(\alpha+d/2)} \varepsilon^{-1} \left | \langle f- \tilde f, \Phi_{l,r}^\mathcal O \rangle_{L^2} \right|  =  \sum_{{\lambda}<l,r} 2^{l(\alpha'-\alpha)}  E\varepsilon^{-1} \left | \langle f- \tilde f, 2^{-l(\alpha'+d/2)}\Phi_{l,r}^\mathcal O \rangle_{L^2} \right|. \notag
\end{align}
The functions $2^{-l(\alpha'+d/2)}\Phi^\mathcal O_{l,r},$ are all contained in the set $\mathcal C(b)$ from Theorem \ref{work}  with $\gamma=\alpha'-2-d/2$ (cf.~after (\ref{holdwav})), and that Theorem combined with (\ref{crudebd}) and the results from Section \ref{infbd} now imply that 
\begin{equation}
E^{\Pi^{D_\varepsilon}} \left[e^{\frac{t}{\varepsilon} \langle f -\tilde f, 2^{-l(\alpha'+d/2)}\Phi_{l,r}^\mathcal O \rangle_{L^2}}  |Y\right] \le r_\varepsilon e^{\frac{t^2}{2}\|S_{f_0}[2^{-l(\alpha'+d/2)}\Phi_{l,r}^{\mathcal O}/u_{f_0}]\|_{L^2}^2} \lesssim r_\varepsilon e^{ct^2},~ |t| \le T,
\end{equation}
for some $r_\varepsilon = O_{P_{f_0}^Y}(1), c>0$. Then using the inequality $E|Z| \le Ee^{Z}+Ee^{-Z}$ for any random variable $Z$ we have the bound
\begin{align*}
&\sum_{\lambda<l \le J,r} 2^{l(\alpha'-\alpha)}  E^{\Pi^{D_\varepsilon}} \left[ \varepsilon^{-1} \big | \langle f- \tilde f, 2^{-l(\alpha'+d/2)}\Phi_{l,r}^\mathcal O \rangle_{L^2} \big| |Y\right] \lesssim r_\varepsilon \sum_{\lambda<l \le J}2^{l(\alpha'+d-\alpha)}.
\end{align*}
which is $o(1)$ as $\lambda \to \infty$ since $\alpha>\alpha'+d$. 

For term $C$ in (\ref{blconv}), using that $\langle X_2, \Phi^\mathcal O_{l,r} \rangle \sim N(0,\|S_{f_0}[\Phi^\mathcal O_{l,r}/u_{f_0}]\|^2_{L^2})$ with $\|S_{f_0}[\Phi^\mathcal O_{l,r}/u_{f_0}]\|_{L^2} \lesssim 2^{2l}$, we have
\begin{align} \label{gpapp}
&E\|X_2-P_{\lambda}(X_2)\|_{(C^\alpha_K(\mathcal O))^*}  \le E \sup_{\|\psi\|_{C^\alpha_K(\mathcal O)} \le 1}  \left | \langle X_2, \psi - \Pi_{W_{\lambda}}\psi \rangle_{L^2}\right| \notag \\
& \lesssim  \sum_{\lambda<l,r} 2^{-l(\alpha+d/2)}  E\left | \langle X_2, \Phi_{l,r}^\mathcal O \rangle_{L^2} \right| \lesssim \sum_{\lambda<l} 2^{-l(\alpha-d/2-2)} =_{\lambda \to \infty} o(1).
\end{align}
To conclude the proof, let $\epsilon'>0$ be given. By the preceding bounds we can choose $\lambda=\lambda(\epsilon')$ large enough so that the terms $B,C$ in (\ref{blconv}) are each less than $\epsilon'/3$ (with $P_{f_0}^Y$-probability as close to one as desired in the case $B$). Next, for every fixed $\lambda=\lambda(\epsilon)$, by Proposition \ref{fidi} with $k=dim(W_{\lambda})$, the term $A$  can be made less than $\epsilon'/3$ with $P_{f_0}^Y$-probability as close to one as desired, for $\varepsilon$ small enough. Overall the quantity in (\ref{blconv}) is thus less than $\epsilon'>0$ arbitrary, with probability as close to one as desired, proving $\beta(\tilde \Pi^{D_\varepsilon}(\cdot|Y), \mathcal N_{f_0}) \to^{P_{f_0}^Y} 0$.

\medskip

\textbf{STEP VII: Convergence of moments and posterior mean}

\smallskip

Recall $\bar f = E^\Pi[f|Y]$. From the previous step we know that the law of $\varepsilon^{-1}(f - \tilde f)$ converges weakly in probability to $\mathcal N_{f_0}$ in $(C^\alpha_{K}(\mathcal O))^*$ as $\varepsilon \to 0$. By tightness of the Gaussian law $\mathcal N_{f_0}$ in $(C^\alpha_{K}(\mathcal O))^*$ and Theorem 2.1.20 and Exercise 2.1.2 in \cite{GN16}, we have $E\|\tilde f\|^4_{(C^\alpha_{K}(\mathcal O))^*} <\infty$ and also that for some constants $C>2\|f_0\|_{(C^\alpha_{K}(\mathcal O))^*}, c'>0$, $$P_{f_0}^Y(\|\tilde f\|_{(C^\alpha_{K}(\mathcal O))^*} > C) \le \Pr (\|\tilde X\|_{(C^\alpha_{K}(\mathcal O))^*} >C/2\varepsilon) \le 2e^{-c'/\varepsilon^2}.$$ Moreover, since $f|Y$ is bounded in $(C^\alpha_{K}(\mathcal O))^*$ we see from (\ref{tvlim}), Remark \ref{momrem} and the Cauchy-Schwarz inequality  
\begin{align*}
& \varepsilon^{-2} E^\Pi[\|f -\tilde f\|_{(C^\alpha_{K}(\mathcal O))^*}^2|Y] \\
&\lesssim E^{\Pi_{D_\varepsilon}}  \left[\varepsilon^{-2} \|f -\tilde f\|^2_{(C^\alpha_{K}(\mathcal O))^*}|Y\right] + C^2\varepsilon^{-2} \Pi(D_{\varepsilon}^c|Y) + \varepsilon^{-2} \|\tilde f\|^2_{(C^\alpha_{K}(\mathcal O))^*} 1\{\|\tilde f\|_{(C^\alpha_{K}(\mathcal O))^*} > C\}  \notag \\
& = O_{P_{f_0}^Y}(1) +  O_{P_{f_0}^Y}\big(\varepsilon^{-2} e^{-c(\eta_\varepsilon/\varepsilon)^2} + \varepsilon^{-2}e^{-c'/2\varepsilon^2} \big)  = O_{P_{f_0}^Y}(1), \notag
\end{align*} 
where the first term can be bounded by similar arguments as after (\ref{normo}) above, using also $EZ^2 \le 2 (Ee^{Z}+Ee^{-Z})$ for any random variable $Z$. Conclude that on an event of probability as close to one as desired, $\varepsilon^{-1}(f - \tilde f)|Y$ has uniformly bounded second (norm-) moments. Using the Skorohod imbedding (Theorem 11.7.2 in \cite{D02}) and standard uniform integrability arguments we can argue by contradiction and deduce that weak convergence implies convergence of the first moment (see also Section \ref{weakprob}) and thus, on the above event,
\begin{equation}
\varepsilon^{-1}(\bar f - \tilde f)=E^\Pi\left[\varepsilon^{-1}(f - \tilde f)|Y\right] \to E^{\mathcal N_{f_0}}(X)=0~~\text{in } (C^\alpha_{K}(\mathcal O))^*,
\end{equation}
 so that indeed $\|\bar f - \tilde f\|_{(C^\alpha_{K}(\mathcal O))^*} = o_{P_{f_0}^Y}(\varepsilon)$ and we can hence replace $\tilde f$ by $\bar f$ in Theorem \ref{main}.

\subsection{Proofs for Section \ref{uq}}

We only prove Corollary \ref{npconf}, the proof of Corollary \ref{spconf} is the same (in fact simpler) and omitted. The proof of coverage of $C_\varepsilon$ in Corollary \ref{npconf} follows the proof of Theorem 7.3.23 in \cite{GN16} (see also Theorem 1 in \cite{CN13}), and we only remark on the necessary modifications: We first notice that space $(C^\alpha_{K}(\mathcal O))^*$ is separable (since finite-dimensional wavelet approximations are norm dense, estimating the dual norm as in (\ref{normo})), which implies that balls in this space form uniformity classes for weak convergence towards $\mathcal N_{f_0}$. Moreover the mapping $\Phi(t)= \mathcal N_{f_0}(x:\|x\|_{(C^\alpha_{K}(\mathcal O))^*}\le t)$ is strictly increasing in $t$ since any shell $\{x: t < \|x\|_{(C^{\alpha}_K(\mathcal O))^*} < t+\delta\}, \delta>0$, of $(C^\alpha_{K}(\mathcal O))^*$ contains an element of the reproducing kernel Hilbert space of $\mathcal N_{f_0}$ (this space equals the image of $L^2$ under the Schr\"odinger operator $S_{f_0}$; using arguments from Section \ref{sop} it is easily seen that this space contains elements of $(C^{\alpha}_K(\mathcal O))^*$ of any given norm $\|h\|_{(C^{\alpha}_K(\mathcal O))^*}, \alpha>2+d/2$). Thus the proof of Theorem 7.3.23 in \cite{GN16} applies directly to give coverage of $C_\varepsilon$ and also that $\varepsilon^{-1}R_\varepsilon$ converges in $P_{f_0}^Y$-probability to $\Phi^{-1}(1-\beta)$.

\section{Appendix}

\subsection{The metric entropy inequality for sub-Gaussian processes}

`Dudley's metric entropy inequality' for suprema of sub-Gaussian random processes has been used repeatedly in the proofs: A centred stochastic process $(X(s): s \in S)$ is said to be sub-Gaussian for some metric $d$ on its index set $S$ if $E e^{\lambda (X(s)-X(t))} \le e^{\lambda^2 d^2(s,t)/2}~\forall s,t \in S, \lambda \in \mathbb R$. Denote by $N(S, \gamma, d)$ the  $\gamma$-covering numbers of the metric space $(S,d)$. Then we have for some $s_0 \in S$, $D$ any upper bound for the diameter of the metric space $(S,d)$ and any $\delta>0$, the inequalities
\begin{equation} \label{dudmet1}
E\sup_{s \in S}|X(s)| \le E|X(s_0)| + 4\sqrt 2 \int_0^{D/2} \sqrt {2\log N(S, \gamma, d)} d \gamma,
\end{equation}
\begin{equation} \label{dudmet2}
E\sup_{s,t\in S, d(s,t)\le \delta}|X(s)-X(t)| \le (16 \sqrt 2 +2) \int_0^{\delta} \sqrt {2\log N(S, \gamma, d)} d \gamma.
\end{equation}
We also note that provided the last integrals converge, the process $(X(s):s \in S)$ has a version for which the last suprema are measurable. See Theorem 2.3.7 in \cite{GN16} for proofs.

\subsection{Some properties of Schr\"odinger operators} \label{sop}

The PDE (\ref{PDE}) has been extensively studied, and we review here a few facts that were used in our proofs. For $f \in C(\bar{\mathcal O})$, the Schr\"odinger operator 
\begin{equation}
S_f[u] = \frac{\Delta}{2} u - f u
\end{equation}
is defined classically for all $u \in C^2(\mathcal O)$, and weakly for all locally integrable functions $u$ by the action $\int_\mathcal O S_f [u] \phi = \int_\mathcal O u S_f [\phi]$ on all $\phi \in C^\infty_c(\mathcal O)$.

Under suitable conditions the Schr\"odinger operator will be seen to have an inverse `$f$-Green'-operator $V_f$ which describes the unique solutions $V_f[h] $ of the \textit{inhomogeneous} Schr\"odinger equation 
\begin{equation} \label{inhom}
S_f[u]= \frac{\Delta u}{2} - fu=h~~\text{ on } \mathcal O~~s.t.~ u =0~~\text{on } \partial \mathcal O.
\end{equation}
The operator $V_f$ has a probabilistic representation by the Feynman-Kac formula
\begin{equation} \label{fkac2}
V_f[h](x) = E^x\left[\int_0^{\tau_\mathcal O} h(X_t) e^{-\int_0^t f(X_s)ds} dt\right],~~x \in \mathcal O,
\end{equation}
where $X_s$ is a standard $d$-dimensional Brownian motion started at $x$ with exit time $\tau_\mathcal O$ from $\mathcal O$ (see \cite{CZ95}, p.88). We also recall (e.g., Theorem 1.17 in \cite{CZ95}) $$\sup_{x \in \mathcal O} E^x \tau_\mathcal O \le K(vol(\mathcal O), d)<\infty.$$
Some key properties of $S_f, V_f$ are summarised in the following result.

\begin{proposition}\label{inverse}
Let $\mathcal O$ be a bounded $C^\infty$-domain in $\mathbb R^d$. Suppose $f \in C(\bar {\mathcal O})$ satisfies $f \ge f_{\min}>0$ on $\bar{\mathcal O}$. Then for any $h \in C(\bar{\mathcal O})$, $V_f[h] \in C_0(\mathcal O)$ satisfies
\begin{equation}
S_f[V_f[h]] =h~\text{on } \mathcal O
\end{equation}
and is the unique solution of (\ref{inhom}). If $h \in C^2(\bar {\mathcal O}) \cap C_0(\mathcal O)$ then we also have 
\begin{equation}
V_f[S_f[h]] =h~\text{on } \mathcal O.
\end{equation}
The operator $V_f[h] = \int_\mathcal O v(\cdot,y)h(y)dy$ admits a symmetric kernel $v(x,y)=v(y,x), x,y \in \mathcal O,$ and extends to a self-adjoint operator on $L^2(\mathcal O)$.
\end{proposition}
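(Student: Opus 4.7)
The strategy is to construct a classical PDE solution to (\ref{inhom}) via standard elliptic theory, identify it with the Feynman--Kac representation (\ref{fkac2}) through It\^o's formula, and then read off all the remaining assertions from uniqueness and the resulting kernel structure.

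First I would settle existence and uniqueness of a solution $u$ to the Dirichlet problem (\ref{inhom}). Since $\mathcal O$ is a bounded $C^\infty$-domain, $f \in C(\bar{\mathcal O})$ and $-f \le -f_{\min} < 0$, the operator $S_f = \tfrac{1}{2}\Delta - f$ is uniformly elliptic with coercive zero-order term, so for $h \in C(\bar{\mathcal O})$ there is a unique $u \in C_0(\mathcal O) \cap W^{2,p}_{\mathrm{loc}}(\mathcal O)$ ($p>d$) solving $S_f u = h$ a.e.\ with vanishing boundary trace (Theorems 6.13 and 9.15 in Gilbarg--Trudinger). Uniqueness also follows from the weak maximum principle: if $w = u_1 - u_2$ solves $S_f w = 0$ with $w|_{\partial \mathcal O}=0$, multiplying by $w$ and integrating by parts gives $\int_\mathcal O (\tfrac{1}{2}|\nabla w|^2 + f w^2)= 0$, hence $w\equiv 0$ by $f\ge f_{\min}>0$.

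Next I would identify $u = V_f[h]$. Applying the It\^o--Krylov formula to the semimartingale $u(X_t)\exp(-\int_0^t f(X_s)\,ds)$ stopped at $\tau_\mathcal O \wedge T$, taking $P^x$-expectations, and letting $T\to\infty$ (permissible because $u$ is bounded and $\sup_x E^x\tau_\mathcal O < \infty$), one obtains
$$u(x) = E^x\!\left[u(X_{\tau_\mathcal O}) e^{-\int_0^{\tau_\mathcal O} f(X_s)ds}\right] - E^x\!\left[\int_0^{\tau_\mathcal O}\!\! S_f u(X_t) e^{-\int_0^t f(X_s)ds}\,dt\right].$$
Since every boundary point of a $C^\infty$-domain is regular for Brownian motion, $X_{\tau_\mathcal O}\in \partial\mathcal O$ a.s., where the continuous function $u$ vanishes; the first term disappears and the second equals $V_f[h](x)$, yielding $u = V_f[h]$. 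In particular $V_f[h]\in C_0(\mathcal O)$ and $S_f V_f[h]=h$. For $h\in C^2(\bar{\mathcal O})\cap C_0(\mathcal O)$ both $h$ and $V_f[S_f h]$ solve (\ref{inhom}) with datum $S_f h$, so the uniqueness step above forces $V_f[S_f h] = h$.

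For the symmetric kernel representation, I would use Fubini in (\ref{fkac2}) to write
$$V_f[h](x) = \int_\mathcal O \Bigl(\int_0^\infty p^{\mathcal O,f}_t(x,y)\,dt\Bigr) h(y)\,dy \equiv \int_\mathcal O v(x,y)h(y)\,dy,$$
where $p^{\mathcal O,f}_t(x,y)$ is the sub-probability density of Brownian motion killed on $\partial\mathcal O$ with multiplicative killing $e^{-\int_0^t f(X_s)ds}$. The symmetry $p^{\mathcal O,f}_t(x,y)=p^{\mathcal O,f}_t(y,x)$ is classical (see Theorem 2.4 in Chung--Zhao \cite{CZ95}) and transfers to $v$. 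Boundedness of $V_f$ on $L^2(\mathcal O)$ follows from Schur's test applied with $\sup_x \int v(x,y)\,dy \le \sup_x E^x\tau_\mathcal O < \infty$ and the symmetric dual bound, and symmetry of $v$ then directly yields self-adjointness on $L^2(\mathcal O)$.

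The main technical obstacle I expect is the It\^o step, because the solution $u$ produced by elliptic regularity is only $W^{2,p}_{\mathrm{loc}}$ rather than $C^2(\mathcal O)$; the Krylov extension of It\^o's formula is required, or, alternatively, one may first mollify $u$ and pass to the limit using interior Schauder estimates. A secondary (but routine) point is justifying the exchange of expectation and time integral and carefully handling the behaviour of $u$ at $\partial\mathcal O$ via boundary regularity of $C^\infty$-domains.
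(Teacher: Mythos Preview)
Your argument is correct in structure but takes a genuinely different route from the paper. The paper's proof is essentially a verification that the hypotheses of Theorems~3.18 and~3.22 and Corollary~3.18 in Chung--Zhao \cite{CZ95} are met (the only real check being $\sup_x V_f[1](x) \le \sup_x E^x\tau_{\mathcal O} < \infty$, which places $f$ in the class $\mathbb F(D,q)$ used there), after which all conclusions are read off from those results; $L^2$-continuity of $V_f$ is deferred to a forward reference to Lemma~\ref{pest}. Your approach is more self-contained: you construct the PDE solution via elliptic theory, identify it with the probabilistic formula through It\^o--Krylov, extract the kernel from the killed transition density, and obtain $L^2$-boundedness independently via Schur's test. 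The payoff of your route is that it explains \emph{why} the Feynman--Kac representation solves the PDE rather than importing that fact wholesale, and your Schur-test argument for $L^2$-boundedness is cleaner than the paper's detour through the elliptic isomorphism (\ref{hiso}); the paper's route is of course shorter and sidesteps the $W^{2,p}$/It\^o--Krylov regularity issues you correctly flag at the end.

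One slip worth fixing: from your displayed It\^o identity, once the boundary term vanishes you obtain $u(x) = -E^x\bigl[\int_0^{\tau_{\mathcal O}} h(X_t)\,e^{-\int_0^t f}\,dt\bigr]$, i.e.\ $u = -V_f[h]$ under the sign convention of (\ref{fkac2}), not $u = V_f[h]$ as you write. This is the familiar sign discrepancy between Green-operator and potential-theoretic conventions (and arguably reflects a missing sign in (\ref{fkac2}) itself, since the paper elsewhere uses $V_f$ consistently as the solution operator for $S_f u = h$); it does not affect your strategy, but you should track it through so that the identification step actually lands on the stated conclusion.
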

\begin{proof}
These results follow from Theorems 3.18 and 3.22 in \cite{CZ95}, where we notice that the space $\mathbb F(D,q)$ is defined just before Proposition 3.16 in that reference, which for $f=-q$ bounded away from zero contains all bounded functions. We notice further that $V_f$ is defined in eq.(39) in \cite{CZ95}, and for $f \ge 0$ we obviously have from (\ref{fkac2}) that $\sup_x V_f[1](x) \le  \sup_x E^x\tau_d <\infty$, so that the conditions of Theorems 3.18 and 3.22 in \cite{CZ95} are verified. Symmetry of the kernel $v$ follows from Corollary 3.18 in \cite{CZ95}. Lemma \ref{pest} below implies that operator $V_f$ is continuous on $L^2(\mathcal O)$ and hence extends to a self-adjoint operator on that space.
\end{proof}

We next turn to the mapping properties of $V_f, S_f$. For $\mathcal O$ a bounded $C^\infty$-domain, $\partial \mathcal O$ is a compact smooth manifold and the spaces $H^\beta(\partial \mathcal O), \mathcal C^\beta(\partial \mathcal O)$ can be defined as usual \cite{LM72, T83}. The Schr\"odinger operator $S_f$ is \textit{properly elliptic} in the sense of \cite{LM72}, p.110f., and if $tr[u] =u_{|\partial \mathcal O}$ is the usual boundary trace map, then the operator $(S_f, tr)$ can be shown to realise a topological isomorphism of $H^{\beta+2}(\mathcal O)$ onto $H^{\beta}(\mathcal O) \times H^{\beta+3/2}(\partial \mathcal O)$ for every $\beta \ge 0$, see Theorem II.5.4 in \cite{LM72} or Theorem 4.3.3 in \cite{T83}. Likewise, $(S_f, tr)$ realises an isomorphism of the H\"older-Zygmund space $\mathcal C^{\beta+2}(\mathcal O)$ onto $\mathcal C^{\beta}(\mathcal O) \times \mathcal C^{\beta+2}(\partial \mathcal O)$ for all  $\beta \ge 0$, see Theorem 4.3.4 in \cite{T83}. These isomorphisms are proved under the further assumption that the only smooth solution to the problem $S_f[u]=0$ on $\mathcal O$ s.t.~$u=0$ on $\partial \mathcal O$ equals $u=0$ identically, which is true in view of (\ref{fkac2}) and Proposition \ref{inverse}. From the above isomorphisms we deduce 
\begin{equation}\label{hiso}
\|u\|_{H^{\beta+2}(\mathcal O)} \lesssim \|S_f [u]\|_{H^{\beta}(\mathcal O)} + \|tr [u]\|_{H^{\beta+2}(\partial \mathcal O)}~~\forall u \in H^{\beta+2}(\mathcal O), \beta \ge 0,
\end{equation}
using also $H^{\beta+2}(\partial \mathcal O) \subset H^{\beta+3/2}(\partial \mathcal O)$, and
\begin{equation}\label{ciso}
\|u\|_{\mathcal C^{\beta+2}(\mathcal O)} \lesssim \|S_f [u]\|_{\mathcal C^{\beta}(\mathcal O)} + \|tr [u]\|_{\mathcal C^{\beta+2}(\partial \mathcal O)} ~~\forall u \in \mathcal C^{\beta+2}(\mathcal O), \beta \ge 0.
\end{equation}
In the above references, the coefficient $f$ is assumed to be a smooth function, but basic arguments show that (\ref{hiso}), (\ref{ciso}) remain valid whenever $f\in \mathcal C^\beta(\mathcal O)$. The constants in the preceding inequalities then depend only on $\beta, d, \mathcal O$ and on a bound for  $\|f\|_{\mathcal C^\beta(\mathcal O)}$.

\begin{lemma} \label{pest}
For any $f$ as in Proposition \ref{inverse} and some constant $c$, $$\|V_f[h]\|_{L^p} \le c\|h\|_{L^p}~~\forall h \in C(\bar{\mathcal O}),~~p\in \{2, \infty\}.$$ 
\end{lemma}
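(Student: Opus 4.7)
The plan is to exploit the Feynman--Kac representation (\ref{fkac2}) together with the properties of the kernel $v(x,y)$ established in Proposition \ref{inverse}. The $L^\infty$ bound is immediate, while the $L^2$ bound reduces to a Schur--type test once non--negativity and symmetry of the kernel are in hand.

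For the case $p=\infty$, I would simply insert the pointwise bound $e^{-\int_0^t f(X_s)\,ds}\le e^{-f_{\min} t}$ into (\ref{fkac2}), giving
\[
|V_f[h](x)|\le \|h\|_\infty\, E^x\!\left[\int_0^{\tau_\mathcal O} e^{-f_{\min} t}\,dt\right]\le \frac{\|h\|_\infty}{f_{\min}},
\]
which is the desired estimate with $c=1/f_{\min}$. (One could equally well bound the inner integral by $\tau_\mathcal O$ and use $\sup_x E^x\tau_\mathcal O\le K(vol(\mathcal O),d)$.)

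For $p=2$, I would first note that (\ref{fkac2}) and Fubini give $V_f[h](x)=\int_\mathcal O v(x,y)h(y)\,dy$ with a non--negative kernel $v(x,y)$, which moreover is symmetric by Proposition \ref{inverse}. Applying the previous $L^\infty$ estimate to $h\equiv 1$ yields
\[
\sup_{x\in\mathcal O}\int_\mathcal O v(x,y)\,dy=\sup_{x\in\mathcal O} V_f[1](x)\le \tfrac{1}{f_{\min}},
\]
and by symmetry the same bound holds with the roles of $x,y$ swapped. Then Cauchy--Schwarz on the inner integral gives
\[
\left(\int v(x,y)|h(y)|\,dy\right)^{\!2}\le \Big(\!\int v(x,y)\,dy\Big)\!\Big(\!\int v(x,y)|h(y)|^2\,dy\Big),
\]
and integrating in $x$ and applying Fubini together with the bound $\int v(x,y)\,dx\le 1/f_{\min}$ produces $\|V_f[h]\|_{L^2}^2\le (1/f_{\min})^2\|h\|_{L^2}^2$, which is the classical Schur test in disguise.

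There is no real obstacle here: the only point that requires any care is verifying that the Feynman--Kac kernel is non--negative and symmetric, but both properties are available (the first is immediate from (\ref{fkac2}), the second is recorded in Proposition \ref{inverse}). The constant $c$ one obtains depends only on $f_{\min}$ (or alternatively on $vol(\mathcal O)$ and $d$ via $K$), consistently with the hypotheses of the lemma.
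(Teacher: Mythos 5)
Your proof is correct, but for the $p=2$ case it takes a genuinely different route from the paper. The paper handles $p=\infty$ exactly as you do, via the Feynman--Kac formula (\ref{fkac2}) and the uniform bound on $E^x\tau_{\mathcal O}$; but for $p=2$ it invokes the elliptic a priori estimate (\ref{hiso}) with $\beta=0$: since $V_f[h]\in C_0(\mathcal O)$ has vanishing boundary trace and $S_f[V_f[h]]=h$, one gets $\|V_f[h]\|_{H^2}\lesssim\|h\|_{L^2}$ and hence the $L^2$ bound by the trivial embedding $H^2\subset L^2$. Your argument instead runs a Schur test on the Feynman--Kac kernel, using its non-negativity (clear from (\ref{fkac2})), its symmetry (recorded in Proposition \ref{inverse}), and the row/column mass bound $\sup_x V_f[1](x)\le 1/f_{\min}$; all of these are available, and the Tonelli manipulations are justified by non-negativity, so the argument is complete. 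The trade-offs: your route is more elementary and self-contained (no PDE regularity theory needed, and it sidesteps the question of why $V_f[h]\in H^2$ for merely continuous $h$), and it yields a constant depending only on $f_{\min}$ (or on $vol(\mathcal O),d$) rather than on an upper bound for $\|f\|_\infty$ as in (\ref{hiso}); the paper's route, on the other hand, delivers the strictly stronger smoothing estimate $\|V_f[h]\|_{H^{2}}\lesssim\|h\|_{L^2}$, which is what is actually needed elsewhere (Lemma \ref{ape}), so the $L^2\to L^2$ bound there is just a byproduct.
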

\begin{proof}
Since $V_f[h] \in C_0(\mathcal O)$ by Proposition \ref{inverse}, its boundary trace vanishes and thus we immediately deduce the case $p=2$ from that proposition and (\ref{hiso}) with $\beta=0$ and since $H^2(\mathcal O)$ embeds continuously into $L^2(\mathcal O)$. The case $p=\infty$ follows immediately from (\ref{fkac2}).
\end{proof}

For the next more precise lemma we recall the dual norm (\ref{h0norm}).

\begin{lemma}\label{ape}
Let $f\in C^s(\bar {\mathcal O})$ satisfy $f>0$, $\|f\|_{C^s} \le D$ for some $s>0$, and let $h \in C(\bar {\mathcal O})$ be given. Suppose that $\omega$ is a solution of the inhomogeneous equation $$\frac{\Delta}{2} \omega -  f \omega = h ~\text{ on } \mathcal O, ~s.t.~\omega=0 ~\text{on } \partial \mathcal O.$$ 
We have for some constant $C(D, d, \mathcal O)$ that
\begin{equation} \label{ape-0}
\|\omega\|_{L^2} = \|V_f[h]\|_{L^2} \le C\|h\|_{(H_0^{2})^*}.
\end{equation} 
If moreover $\beta$ is a non-negative integer and $s\ge \beta$, then we also have for some constant $C'=C'(D,\beta, d, \mathcal O)$ that
\begin{equation}\label{ape0}
\|\omega\|_{H^{\beta+2}} = \|V_f[h]\|_{H^{\beta+2}}  \le C' \|h\|_{H^\beta}
\end{equation} 
and for all $h \in C^\beta(\mathcal O)$ and $s>\beta$,
\begin{equation}\label{ape01}
\|\omega\|_{\mathcal C^{\beta+2}} = \|V_f[h]\|_{\mathcal C^{\beta+2}} \le C' \|h\|_{\mathcal C^\beta}.
\end{equation} 
\end{lemma}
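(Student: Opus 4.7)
My plan is to dispatch the three estimates in order of increasing subtlety, with the Sobolev and Hölder bounds (\ref{ape0})--(\ref{ape01}) falling out immediately from the isomorphism theorems just recalled, and the dual bound (\ref{ape-0}) obtained by duality against the inverse operator.

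For (\ref{ape0}) and (\ref{ape01}) I will invoke the abstract isomorphism property of the properly elliptic operator $(S_f, tr)$, namely inequalities (\ref{hiso}) and (\ref{ciso}). Since $\omega$ has vanishing boundary trace and $S_f[\omega]=h$, these give $\|\omega\|_{H^{\beta+2}} \lesssim \|h\|_{H^\beta}$ and $\|\omega\|_{\mathcal C^{\beta+2}} \lesssim \|h\|_{\mathcal C^\beta}$, where the constants are controlled by $\mathcal C^\beta(\mathcal O)$-norms of $f$. The hypothesis $f\in C^s(\bar{\mathcal O})$ with $s\geq \beta$ (resp.\ $s>\beta$) combined with the embeddings $C^s \subset \mathcal C^s \subset \mathcal C^\beta$ recalled in Section~\ref{basic} gives the required control of $\|f\|_{\mathcal C^\beta}$ by $D$, so the constants depend only on $D,\beta,d,\mathcal O$. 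That $\omega = V_f[h]$ is the (unique) solution in question follows from Proposition~\ref{inverse}.

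The estimate (\ref{ape-0}) is the main content. I will argue by duality. Given $\phi \in L^2(\mathcal O)$, set $v = V_f[\phi]$; by Proposition~\ref{inverse} we have $v \in C_0(\mathcal O)$, and by the already-proved bound (\ref{ape0}) with $\beta = 0$ we have $v \in H^2(\mathcal O)$ with
\[
\|v\|_{H^2(\mathcal O)} \le C'\|\phi\|_{L^2(\mathcal O)}.
\]
Since both $\omega, v \in H^2(\mathcal O) \cap C_0(\mathcal O)$, Green's identity (applied twice, as in the proof of Theorem~\ref{meta}) justifies the integration by parts
\[
\langle \omega, \phi\rangle_{L^2} = \langle \omega, S_f[v]\rangle_{L^2} = \langle S_f[\omega], v\rangle_{L^2} = \langle h, v\rangle_{L^2},
\]
where the boundary terms produced by pairing $\omega \Delta v$ against $v \Delta \omega$ vanish because both traces are zero, and the $fv\omega$ terms cancel. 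Using the definition (\ref{h0norm}) of the $(H_0^2)^*$-norm, this yields
\[
|\langle \omega, \phi\rangle_{L^2}| \le \|h\|_{(H_0^2)^*}\|v\|_{H^2(\mathcal O)} \le C' \|h\|_{(H_0^2)^*}\|\phi\|_{L^2(\mathcal O)},
\]
and taking the supremum over $\|\phi\|_{L^2}\le 1$ gives $\|\omega\|_{L^2} \le C\|h\|_{(H_0^2)^*}$. The only mild subtlety is checking that the test function $v$ produced by $V_f$ genuinely lies in the admissible class $C_0(\mathcal O) \cap \{\|\cdot\|_{H^2}\le 1\}$ appearing in the definition of $\|h\|_{(H_0^2)^*}$, which is exactly what Proposition~\ref{inverse} and (\ref{ape0}) with $\beta=0$ guarantee.
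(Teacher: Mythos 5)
Your proposal is correct and follows essentially the same route as the paper: the Sobolev and H\"older bounds come from the elliptic isomorphism inequalities (\ref{hiso})--(\ref{ciso}), and the dual bound (\ref{ape-0}) is the same duality argument, the paper simply citing the self-adjointness of $V_f$ from Proposition \ref{inverse} where you re-derive the identity $\langle \omega,\phi\rangle=\langle h, V_f[\phi]\rangle$ via Green's identity. The only point to tidy is that Proposition \ref{inverse} applies $V_f$ to continuous functions, so the test functions $\phi$ should range over $C_c(\bar{\mathcal O})$ (dense in the $L^2$ unit ball) rather than all of $L^2(\mathcal O)$, exactly as in the paper's supremum.
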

\begin{proof}
We first prove (\ref{ape0}) for $h \in H^\beta(\mathcal O)$. By Theorems 8.9, 8.13 in \cite{GT98} and Proposition \ref{inverse} the solution $\omega$ is unique, lies in $H^{\beta+2}(\mathcal O) \cap C_0(\mathcal O)$ and  can be represented as $\omega = V_{f}[h]$. Then (\ref{hiso}) gives 
\begin{equation*}
\|\omega\|_{H^{\beta+2}(\mathcal O)}  \lesssim \|S_f[V_f[h]]\|_{H^{\beta}(\mathcal O)} + \|tr[V_f[h]]\|_{H^{\beta+2}(\partial \mathcal O)} = \|h\|_{H^\beta(\mathcal O)}.
\end{equation*}
Replacing (\ref{hiso}) by (\ref{ciso}) and using Theorems 6.14, 6.19 in \cite{GT98} and again Proposition \ref{inverse} to ensure that $\omega \in C^{\beta+2}(\mathcal O) \cap C_0(\mathcal O)$ for $h \in C^s(\mathcal O), f \in  C^s(\bar{\mathcal O})$, the same estimate follows for $\mathcal C^{\beta}$-norms replacing $H^\beta$-norms, giving (\ref{ape01}) for such $h$. Since $C^s(\mathcal O), s>\beta,$ is dense in $C^\beta(\mathcal O) \subset \mathcal C^\beta(\mathcal O)$, the overall result follows from a basic approximation argument.

We finally prove (\ref{ape-0}): By Proposition \ref{inverse} the operator $V_f$ is self-adjoint and takes values in $C_0(\mathcal O)$. We thus see  from (\ref{ape0}) with $\beta=0$ that
\begin{align} \label{dua}
\|w\|_{L^2} &= \|V_f [h]\|_{L^2} = \sup_{\phi \in C_c(\mathcal O), \|\phi\|_{L^2}\le 1} \left| \int_\mathcal O \phi V_f[h] \right| \notag \\
&= \sup_{\phi \in C_c(\mathcal O), \|\phi\|_{L^2}\le 1} \left|\int_\mathcal O V_f [\phi] h \right| \le  \sup_{\phi \in C_c(\mathcal O),\|\phi\|_{L^2}\le 1} \|V_f[\phi]\|_{H^2} \|h\|_{(H_0^2)^*} \le C\|h\|_{(H_0^2)^*},
\end{align}
completing the proof.
 \end{proof}
 
 We turn to existence and properties of solutions to the homogeneous Schr\"odinger equation (\ref{PDE}), and some basic stability estimates for the `solution maps' of (\ref{PDE}), (\ref{inhom}).
 
\begin{proposition}\label{sbaest} 
Let $f>0$ satisfy $\|f\|_{C^s(\mathcal O)} \le D$ for some $s>0, D>0$, and assume $g \in C^{s+2}(\bar{\mathcal O})$.

A) A unique solution $u_f \in C^{2}(\bar{\mathcal O})$ of (\ref{PDE}) exists and has Feynman-Kac representation (\ref{fkac}). 

Moreover, for every non-negative integer $0 \le \beta \le  s$ we have
\begin{equation} \label{reg0}
\|u_f\|_{H^{\beta+2}(\mathcal O)} \leq D' \|g\|_{H^{\beta+2}(\partial \mathcal O)} 
\end{equation}
and if  $0 \le \beta < s$ then we also have
\begin{equation} \label{reg1}
\|u_f\|_{\mathcal C^{\beta+2}(\mathcal O)} \leq D'  \|g\|_{\mathcal C^{\beta+2}(\partial \mathcal O)}
\end{equation}
where $D'$ depends only on $D,d, \mathcal O$. 

\medskip

B)If $f,h \in C^s(\mathcal O), s>0,$ and $u_f,u_h \in C(\bar {\mathcal O})$ are solutions to (\ref{PDE}) with coefficients $f,h$, respectively, then
we have $$\|u_f-u_h\|_{L^2} \le c  \|f-h\|_{(H^2_0)^*} \le c \|f-h\|_{L^2} $$ where $c>0$ depends only on upper bounds for $\|f\|_{C^s(\mathcal O)}, \|h\|_{C^s(\mathcal O)}, \|g\|_{C^{s+2}(\bar{\mathcal O})}$ and on $d, \mathcal O$.

\medskip

C) If $f,h \in C(\bar {\mathcal O})$ both satisfy $f,h \ge f_{\min}>0$, then for all $q \in C(\bar {\mathcal O})$, $$\|V_f(q)-V_h(q)\|_{L^2} \lesssim \|f-h\|_{L^2} \|q\|_\infty.$$

\end{proposition}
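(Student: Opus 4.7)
The three parts are handled in order, each reducing to the elliptic machinery developed earlier in this subsection.

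For part A, the strategy is to combine the two isomorphism statements (\ref{hiso}) and (\ref{ciso}) (with $f$ as the zeroth-order coefficient) with the Feynman--Kac formula. The isomorphism of $(S_f,\mathrm{tr}):H^{\beta+2}(\mathcal O)\to H^{\beta}(\mathcal O)\times H^{\beta+3/2}(\partial\mathcal O)$ directly yields existence and uniqueness of a solution $u_f\in H^{\beta+2}(\mathcal O)$ to (\ref{PDE}) together with the bound (\ref{reg0}), since $S_f[u_f]=0$ and the boundary trace is $g\in H^{\beta+2}(\partial\mathcal O)$. The same argument applied to (\ref{ciso}) gives (\ref{reg1}); taking $\beta$ large enough and invoking Sobolev embedding shows $u_f\in C^2(\bar{\mathcal O})$. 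The identification of $u_f$ with the Feynman--Kac functional (\ref{fkac}) then follows by uniqueness once one verifies, via It\^o's formula applied to $e^{-\int_0^t f(X_s)\,ds}u_f(X_t)$ and optional stopping at $\tau_{\mathcal O}$, that the right-hand side of (\ref{fkac}) solves (\ref{PDE}); the uniform bound $\sup_x E^x\tau_{\mathcal O}<\infty$ ensures integrability. The dependence of the constant on $\|f\|_{C^s}$ is standard for elliptic estimates, as noted in the text immediately after (\ref{ciso}).

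For part B, write $w=u_f-u_h$ and subtract the two PDEs to obtain $S_f[w]=(f-h)u_h$ on $\mathcal O$ with $w_{|\partial\mathcal O}=0$. Inequality (\ref{ape-0}) of Lemma \ref{ape} then gives
\[
\|w\|_{L^2}\le C\,\|(f-h)u_h\|_{(H^2_0)^*}\le C\,\|f-h\|_{(H^2_0)^*}\!\!\sup_{\|\phi\|_{H^2}\le1}\|\phi\,u_h\|_{H^2}.
\]
When $d<4$ the multiplier inequality (\ref{mult0}) bounds $\|\phi u_h\|_{H^2}\lesssim \|u_h\|_{H^2}$, which is controlled by (\ref{reg0}) with $\beta=0$ in terms of $\|g\|_{C^{s+2}}$. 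For $d\ge 4$ the algebra property of $H^2$ fails and one must instead use (\ref{mult}) to write $\|\phi u_h\|_{H^2}\lesssim \|u_h\|_{\mathcal C^2}\|\phi\|_{H^2}$, with $\|u_h\|_{\mathcal C^2}$ controlled by (\ref{reg1}); this is the same dimension dichotomy appearing in the proof of Proposition \ref{diff}. The second inequality in B is immediate from $\|\cdot\|_{(H^2_0)^*}\le\|\cdot\|_{L^2}$.

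For part C, let $u=V_f(q)$ and $v=V_h(q)$, so that $S_f[u]=q$ and $S_h[v]=q$ with zero boundary traces. Rewriting $S_f[v]=S_h[v]+(h-f)v=q+(h-f)v$ gives
\[
S_f[u-v]=(f-h)v\text{ on }\mathcal O,\qquad (u-v)_{|\partial\mathcal O}=0.
\]
Lemma \ref{pest} applied twice (first with $p=2$ to the equation for $u-v$, then with $p=\infty$ to $v$) yields
\[
\|V_f(q)-V_h(q)\|_{L^2}\lesssim \|(f-h)v\|_{L^2}\le \|f-h\|_{L^2}\|v\|_\infty\lesssim \|f-h\|_{L^2}\|q\|_\infty,
\]
which is the claim.

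\textbf{Main obstacle.} Apart from invoking the Lions--Magenes/Triebel isomorphisms, the only non-routine point is the $d\ge 4$ case of part B, where the $H^2$-algebra property is unavailable; the remedy is to factor the estimate through the H\"older--Zygmund bound on $u_h$, exactly as in Proposition \ref{diff}. Everything else is bookkeeping with the inverse operator $V_f$ and the two multiplier inequalities (\ref{mult0})--(\ref{mult}).
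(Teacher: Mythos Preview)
Your proposal is correct and follows essentially the same route as the paper. The only notable difference is in part B: the dimension dichotomy you introduce is unnecessary, since the multiplier inequality (\ref{mult}) gives $\|\phi u_h\|_{H^2}\lesssim \|u_h\|_{\mathcal C^2}\|\phi\|_{H^2}$ in all dimensions, and $\|u_h\|_{\mathcal C^2}$ is controlled by (\ref{reg1}) with $\beta=0$ (valid for any $s>0$); the paper uses this single estimate directly rather than splitting into $d<4$ and $d\ge4$.
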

\begin{proof}
A) The existence result follows from Theorem 6.14 in \cite{GT98}. The Feynman-Kac representation (\ref{fkac}) is derived, e.g., in Theorem 4.7 in \cite{CZ95}. To prove (\ref{reg0}) we notice that Theorem 8.13 in \cite{GT98} and the hypotheses imply $u_f \in H^{\beta+2}$ so that (\ref{hiso}) gives $$\|u_f\|_{H^{\beta+2}(\mathcal O)} \lesssim \|S_f [u_f]\|_{H^{\beta}(\mathcal O)} + \|tr[u_f]\|_{H^{\beta+2}(\partial \mathcal O)} \lesssim \|g\|_{H^{\beta+2}(\partial \mathcal O)}.$$ We can then prove (\ref{reg1}) completely analogously, using Theorem 6.19 in \cite{GT98} to establish $u_f \in C^{\beta+2}(\mathcal O)$ and then (\ref{ciso}).

\smallskip

B) We notice that $\omega=u_f-u_h$ solves the inhomogeneous equation $(\Delta/2) \omega -f \omega = (f-h)u_h$ on $\mathcal O$ with $\omega=g-g=0$ on $\partial \mathcal O$. By Proposition \ref{inverse}, Lemma \ref{ape} and (\ref{mult}) we thus have 
\begin{align*}
\|u_f-u_h\|_{L^2} &= \|V_f[(f-h)u_h]\|_{L^2} \le C\|u_h(f-h)\|_{(H^2_0)^*} \\
& \le C \sup_{\|g\|_{H^2} \le 1}\|g u_h\|_{H^2} \|f-h\|_{(H^2_0)^*} \leq c \|u_h\|_{\mathcal C^2} \|f-h\|_{(H^2_0)^*}
\end{align*}
and the result follows since $\|u_h\|_{\mathcal C^2} \le const$ by part A) with $\beta =0$.

\smallskip

C) Notice that $v=V_f[q] - V_h[q]$ solves $\frac{\Delta v}{2} - fv = (f-h)V_h[q]$ on $\mathcal O$ subject to zero boundary conditions. Thus $v= V_f[(f-h)V_h[q]]$ and so by Lemma \ref{pest} we must have $\|v\|_{L^2} \lesssim \|f-h\|_{L^2} \|V_h[q]\|_{L^\infty} \lesssim \|f-h\|_{L^2} \|q\|_\infty.$
\end{proof}

\subsection{Some properties of H\"older-type spaces}\label{cgamma}

\begin{proposition}\label{hzimb}
We have $C_c^{\alpha}(\mathcal O) \subset \mathcal C_c^{\alpha}(\mathcal O) \subset \mathcal C^{\alpha, W}(\mathcal O)$ for all $0 <\alpha <S$, and $\|f\|_{\mathcal C^{\alpha, W}(\mathcal O)} \le c_2' \|f\|_{\mathcal C^{\alpha}(\mathcal O)}  \le c_2 \|f\|_{C^{\alpha}(\mathcal O)}$, with uniform constants $c_2, c_2'$.
\end{proposition}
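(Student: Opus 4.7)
The second inequality $\|f\|_{\mathcal C^{\alpha}(\mathcal O)}\le c_2 \|f\|_{C^{\alpha}(\mathcal O)}$ is already stated in the text (the continuous imbedding $C^\alpha\subset\mathcal C^\alpha$ on smooth domains), so the real content is the first inequality
\[
\|f\|_{\mathcal C^{\alpha,W}(\mathcal O)}\le c_2'\,\|f\|_{\mathcal C^\alpha(\mathcal O)},\qquad f\in\mathcal C^\alpha_c(\mathcal O).
\]
The plan is to reduce everything to the known wavelet characterisation on $\mathbb R^d$ via extension by zero. Given $f\in\mathcal C^\alpha_c(\mathcal O)$ with $\supp f \subset K \Subset \mathcal O$, set $\tilde f$ equal to $f$ on $\mathcal O$ and $0$ on $\mathbb R^d\setminus\mathcal O$. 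Since $\eta:=\dist(K,\partial\mathcal O)>0$, the function $\tilde f$ is identically zero on an open neighbourhood of $\partial\mathcal O$, and a direct check from the definition of the Hölder--Zygmund norm (via bounded second differences on $\mathbb R^d$, which for $x,y$ straddling $\partial K$ are controlled by the one-sided values on $K$ because the other side vanishes) yields
\[
\|\tilde f\|_{\mathcal C^\alpha(\mathbb R^d)}\le C(\eta,\alpha,\mathcal O)\,\|f\|_{\mathcal C^\alpha(\mathcal O)}.
\]

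Once $\tilde f\in\mathcal C^\alpha(\mathbb R^d)$, the wavelet characterisation (\ref{holdwav}) on $\mathbb R^d$ (valid for $\alpha<S$, and extended to integer $\alpha$ by the Besov identification $\mathcal C^\alpha=B^\alpha_{\infty,\infty}$) delivers
\[
\sup_{l,m}2^{l(\alpha+d/2)}\,\bigl|\langle\tilde f,\Phi_{l,m}\rangle_{L^2(\mathbb R^d)}\bigr|\le C\,\|\tilde f\|_{\mathcal C^\alpha(\mathbb R^d)}.
\]
I then transfer this bound to the boundary-adapted basis $\{\Phi^\mathcal O_{l,r}\}$. For an interior wavelet $\Phi^\mathcal O_{l,r}=\Phi_{l,r}$ compactly supported inside $\mathcal O$, the identity $\langle f,\Phi_{l,r}\rangle_{L^2(\mathcal O)}=\langle\tilde f,\Phi_{l,r}\rangle_{L^2(\mathbb R^d)}$ is immediate. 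For a boundary-corrected wavelet
\(
\Phi^{bc}_{l,r}=\sum_{|m-m'|\le K}d^l_{m,m'}\Phi_{l,m'},
\)
I expand
\[
\bigl|\langle f,\Phi^{bc}_{l,r}\rangle_{L^2(\mathcal O)}\bigr|
\le\sum_{|m-m'|\le K}|d^l_{m,m'}|\,\bigl|\langle\tilde f,\Phi_{l,m'}\rangle_{L^2(\mathbb R^d)}\bigr|
\le D\cdot C\,2^{-l(\alpha+d/2)}\,\|\tilde f\|_{\mathcal C^\alpha(\mathbb R^d)},
\]
using the uniform bound $\sum_{|m-m'|\le K}|d^l_{m,m'}|\le D$ from (\ref{wavprop}). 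Combining these two cases and chaining with the extension estimate gives $\|f\|_{\mathcal C^{\alpha,W}(\mathcal O)}\le c_2'\,\|f\|_{\mathcal C^\alpha(\mathcal O)}$, as claimed.

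The main obstacle is the extension step: one needs to justify that the zero extension of a compactly supported $\mathcal C^\alpha$-function is in $\mathcal C^\alpha(\mathbb R^d)$ with quantitative norm control. For non-integer $\alpha$ this is essentially the standard Hölder extension argument, but for integer $\alpha$ (where $\mathcal C^\alpha$ is the genuine Zygmund class) one has to verify that second differences of $\tilde f$ remain $O(h^\alpha)$ across $\partial(\supp f)$; this is where the strict positivity of $\eta=\dist(K,\partial\mathcal O)$ is used, since the boundary of $\supp f$ lies in the interior of $\mathcal O$ so no trace conditions from $\partial\mathcal O$ intervene. All remaining steps are bookkeeping using (\ref{wavprop}) and (\ref{holdwav}).
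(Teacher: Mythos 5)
Your proof is correct and follows essentially the same route as the paper's: extend by zero to $\mathbb R^d$, invoke the wavelet characterisation (\ref{holdwav}) (in its two-sided Besov/Zygmund form for integer $\alpha$), and split into interior wavelets (where the coefficients coincide) and boundary-corrected wavelets handled via the expansion in the $\Phi_{l,m'}$ with the uniform bound $\sum_{|m-m'|\le K}|d^l_{m,m'}|\le D$ from (\ref{wavprop}). One caveat: your extension constant must not depend on $\eta=\dist(\supp f,\partial\mathcal O)$, since the proposition asserts constants uniform over all of $C^\alpha_c(\mathcal O)$; and indeed it need not, because for $x\in\supp f$ and $y\notin\mathcal O$ one compares $D^if(x)$ not with $0$ at distance $\ge\eta$ but with its (vanishing) value at a point of $\mathcal O\setminus\supp f$ lying on the segment $[x,y]$ just inside $\partial\mathcal O$, so the global H\"older (and Zygmund) seminorm of the zero extension is bounded by the intrinsic one with an absolute constant.
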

\begin{proof}
We first prove the result for $f \in C_c^{\alpha}(\mathcal O)$, which has a zero extension from $\mathcal O$ to $\mathbb R^d$ that defines an element of $C^{\alpha}(\mathbb R^d)$ and the global H\"older norm is equal to the intrinsic one. Thus by (\ref{holdwav}) we have for all the interior wavelets $\Phi_{l,r}^\mathcal O=\Phi_{l,r}$ supported within $\mathcal O$ that $$2^{l(\alpha+d/2)}|\langle f, \Phi^\mathcal O_{l,r}\rangle_{L^2(\mathcal O)}| = 2^{l(\alpha+d/2)} |\langle f, \Phi_{l,r}\rangle_{L^2(\mathbb R^d)}| \le c \|f\|_{C^{\alpha}(\mathcal O)},$$ and for the boundary wavelets, by the support of $f$ in $\mathcal O$ and using (\ref{wavprop}) gives
\begin{align*}
\sup_{l,r} 2^{l(\alpha+d/2)}|\langle f, \Phi^\mathcal O_{l,r} \rangle_{L^2(\mathcal O)}| &\le c  \sup_{l,r} \sum_{|m-m'| \le K} |d^l_{m,m'}| 2^{l(\alpha+d/2)} |\langle f, \Phi_{l,m'}\rangle_{L^2(\mathbb R^d)}| \le cD \|f\|_{C^{\alpha}(\mathcal O)}.
\end{align*}
The preceding proof also applies to $f \in \mathcal C_c^\alpha(\mathcal O)$ since the wavelet norm (\ref{holdwav}) is an equivalent norm on $\mathcal C^\alpha(\mathbb R^d)$ (Chapter 4, \cite{GN16}), and since $C_c^{\alpha}(\mathcal O) \subset \mathcal C_c^{\alpha}(\mathcal O)$, see after (\ref{holdlognorm}).
\end{proof}

Some further properties are the content of the next proposition. For these we recall the usual definition of Besov spaces $B^\alpha_{pq}(\mathbb R^d), \alpha \in \mathbb R,$ by tensor Littlewood-Paley decomposition and of $B^{\alpha, W}_{pq}(\mathbb R^d), \alpha \in \mathbb R,$ by a tensor wavelet basis, each equivalent to each other and to the classical definition in terms of moduli of continuity when $\alpha>0$. See \cite{T83, M92, T08} and also Section 4.3.1 in \cite{GN16} (adapted to the multi-dimensional setting) for details. 

\begin{proposition}\label{pain}
A) Let $\alpha>0$ but $\alpha-2\ell <0$ for some $\ell \in \mathbb N$. Then for any $h \in C^{\alpha}_c(\mathcal O) \subset C_c^\alpha(\mathbb R^d)$ and $\Delta^\ell = \Delta \dots^{\ell \text{ times}} \Delta$ the iterated Laplace operator, we have
\begin{equation} \label{regul}
 \|\Delta^\ell h\|_{B^{\alpha-2\ell}_{\infty \infty}(\mathbb R^d)}  \le   C  \|h\|_{C^{\alpha}(\mathcal O)}.
\end{equation}
B) Let $\alpha <0, \rho > |\alpha|$ and $g \in C^{\rho}(\mathcal O)$. Then, using the definition of the $\mathcal C^{\alpha,W}(\mathcal O)$-norms in (\ref{hzbc}) also for negative values of $\alpha$, we have for all $h \in C_c(\mathcal O)$ and some constant $c>0$
\begin{equation} \label{multc}
\|hg\|_{\mathcal C^{\alpha,W}(\mathcal O)} \le c \|h\|_{B^{\alpha}_{\infty \infty}(\mathbb R^d)} \|g\|_{C^\rho(\mathcal O)}.
\end{equation}
C) For every $\alpha>0$ there exist finite constants $A>0$ depending on $d,\alpha$ such that the $\|\cdot\|_\infty$-metric entropy of the unit ball of $C_c^{\alpha}(\mathcal O)$ can be bounded as
\begin{equation}\label{vdv}
\log N(\{\psi: \|\psi\|_{C_c^{\alpha}(\mathcal O)} \le 1\}, \eta, \|\cdot\|_\infty) \le (A/\eta)^{d/\alpha} ,~~~ 0<\eta < A/2.
\end{equation}
\end{proposition}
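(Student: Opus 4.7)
The plan is to treat each of the three parts separately, reducing in each case to a classical statement on $\mathbb R^d$ and then transferring it to $\mathcal O$ via the compact support hypothesis and the boundary corrected wavelet basis.

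For Part A, my approach is to extend $h$ by zero from $\mathcal O$ to $\mathbb R^d$. Since $h\in C^\alpha_c(\mathcal O)$, this zero extension $\tilde h$ belongs to $C^\alpha(\mathbb R^d)$ with $\|\tilde h\|_{C^\alpha(\mathbb R^d)}\lesssim \|h\|_{C^\alpha(\mathcal O)}$, and hence by the continuous embedding $C^\alpha(\mathbb R^d)\subset \mathcal C^\alpha(\mathbb R^d)=B^\alpha_{\infty\infty}(\mathbb R^d)$ noted after \eqref{holdlognorm} we obtain the same bound for the Besov norm. I then invoke the standard mapping property that a differential operator of order $m$ takes $B^\alpha_{\infty\infty}(\mathbb R^d)$ continuously to $B^{\alpha-m}_{\infty\infty}(\mathbb R^d)$: applied to $\Delta^\ell$ (order $m=2\ell$) this gives \eqref{regul}. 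This is a textbook fact (Section 2.3.8 in \cite{T83}) but can also be verified directly by observing that each Littlewood--Paley block $\Delta_j f$ localises frequencies in an annulus $|\xi|\simeq 2^j$, so that $\|\Delta_j(\Delta^\ell f)\|_\infty\lesssim 2^{2\ell j}\|\Delta_j f\|_\infty$, and the Besov norm is recovered by reweighting.

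For Part B, my approach is to use a paramultiplication estimate. For $\alpha<0$ and $\rho>|\alpha|$, the bilinear map $(h,g)\mapsto hg$ extends boundedly from $B^\alpha_{\infty\infty}(\mathbb R^d)\times C^\rho(\mathbb R^d)$ into $B^\alpha_{\infty\infty}(\mathbb R^d)$; this is Bony's paraproduct decomposition combined with the trivial bounds on each piece (see the multiplier theorems in Sections 2.8.2 and 4.2 of \cite{T83}, or Remark 2 on p.143 of the same reference, extended to the case of negative smoothness). Since $g$ is only given on $\mathcal O$, I would first extend it smoothly to an element of $C^\rho(\mathbb R^d)$ with comparable norm (possible on the $C^\infty$-domain $\mathcal O$), and since $h$ has compact support in $\mathcal O$ the product $hg$ has compact support in $\mathcal O$ as well. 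It then remains to bound $\|hg\|_{\mathcal C^{\alpha,W}(\mathcal O)}$ by $\|hg\|_{B^\alpha_{\infty\infty}(\mathbb R^d)}$. For interior wavelets $\Phi^\mathcal O_{l,r}=\Phi_{l,r}$ this is immediate, and for boundary corrected wavelets the bound
$$|\langle hg,\Phi^{bc}_{l,r}\rangle_{L^2(\mathcal O)}|\le \sum_{|m-m'|\le K}|d^l_{m,m'}|\,|\langle \widetilde{hg},\Phi_{l,m'}\rangle_{L^2(\mathbb R^d)}|$$
together with \eqref{wavprop} and the wavelet characterisation of $B^{\alpha}_{\infty\infty}(\mathbb R^d)$ (\cite{T08}) yields the desired estimate.

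Part C is classical: \eqref{vdv} is the Kolmogorov--Tikhomirov metric entropy bound for bounded subsets of H\"older spaces on compact domains, and a version that applies directly to functions compactly supported in the bounded domain $\mathcal O$ can be found for instance in Theorem 4.3.36 of \cite{GN16}.

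The main obstacle I anticipate is Part B: while the paramultiplication estimate on $\mathbb R^d$ is standard, verifying it in the exact form needed with $B^\alpha_{\infty\infty}$ on one side and the wavelet seminorm $\mathcal C^{\alpha,W}(\mathcal O)$ on the other requires the transfer described above. The compact support of $h$ and the local character of the boundary corrected wavelets keep this transfer under control, but the bookkeeping has to be done carefully.
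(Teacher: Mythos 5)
Your proposal is correct and follows essentially the same route as the paper: Part A via zero extension, the imbedding $C^\alpha_c(\mathcal O)\subset B^\alpha_{\infty\infty}(\mathbb R^d)$ and the Besov mapping property of $\Delta^\ell$ (Section 2.3.8 in \cite{T83}); Part B via the pointwise multiplier theorem (Theorem 2.8.2 in \cite{T83}) after localising/extending $g$, followed by the same interior/boundary-wavelet transfer used in Proposition \ref{hzimb}; and Part C via the classical Kolmogorov--Tikhomirov entropy bound. The only cosmetic differences are that the paper multiplies $g$ by a smooth cutoff rather than extending it to $\mathbb R^d$, and cites \cite{vdVW96} rather than \cite{GN16} for the entropy bound.
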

\begin{proof}
For A), we have the inequalities $$ \|\Delta^\ell h\|_{B^{\alpha-2\ell}_{\infty \infty}(\mathbb R^d)} \le c\|h\|_{B^{\alpha}_{\infty \infty}(\mathbb R^d)} \le c' \|h\|_{C^{\alpha}(\mathcal O)},$$ where the first inequality follows from Theorem 2.3.8 in \cite{T83} (or as in the proof of Proposition 4.3.19 in \cite{GN16}), and where we have used the continuous imbedding $C_c^{\alpha}(\mathcal O) \subset C^{\alpha}(\mathbb R^d) \subset B^{\alpha}_{\infty \infty}(\mathbb R^d)$ (see \cite{T83}) in the second inequality. 

\smallskip

We now prove B): since $h$ has compact support in $\mathcal O$ we can multiply $g$ by a function in $C^\infty_c(\mathcal O)$ so that $hg = h \bar g$ on $\mathcal O$ and $\bar g \in C^\rho_c(\mathcal O)$ satisfies $\|\bar g\|_{C^\rho(\mathcal O)} \le c \|g\|_{C^\rho(\mathcal O)}$. Then Theorem 2.8.2 in \cite{T83} implies $$\|h \bar g\|_{B^{\alpha}_{\infty \infty}(\mathbb R^d)} \le c' \|h\|_{B^\alpha_{\infty \infty}(\mathbb R^d)} \|\bar g\|_{B^\rho_{\infty \infty}(\mathbb R^d)} \le c'' \|h\|_{B^\alpha_{\infty \infty}(\mathbb R^d)} \|g\|_{C^\rho(\mathcal O)}.$$ The $\|h \bar g\|_{B^{\alpha}_{\infty \infty}}$-norm bounds the $\|h \bar g\|_{B^{\alpha,W}_{\infty \infty}}$-norm up to a constant multiple which implies the desired decay of the wavelet coefficients of $h \bar g$ -- the bound on the $\|hg\|_{\mathcal C^{\alpha,W}}$-norm now follows by just repeating the estimates from the proof of Proposition \ref{hzimb}.

\smallskip

Finally the metric entropy bound in Part C) is proved in Theorem 2.7.1 in \cite{vdVW96} (the domain $\mathcal O$ there has to be bounded and convex but we can always extend elements of $C^\alpha_c(\mathcal O)$ by zero to a larger bounded convex domain without increasing the norm).
\end{proof}

\subsection{Likelihood functions and a contraction theorem for general inverse problems}\label{obs}

Let $\mathcal F$ be a Polish space with Borel-$\sigma$-field $\mathcal B_\mathcal F$ and let $\mathbb H$ be a Hilbert  space that is separable for the norm induced by the inner product $\langle \cdot, \cdot \rangle_{\mathbb H}$, with Borel-$\sigma$-field $\mathcal B_\mathbb H$. Suppose $G: \mathcal F \to  \mathbb H$ is a Borel-measurable mapping and, for $\eps>0$ a scalar `noise level', consider the formal equation in $\mathbb H$ given by
\begin{equation} \label{modelab}
Y =  G(f) + \eps \mathbb W.
\end{equation}
Here $\mathbb W$ is a centred Gaussian white noise process $(\mathbb W(h): h \in \mathbb H)$ with covariance $ E \mathbb W(h) \mathbb W(g) = \langle h,g \rangle_{\mathbb H}$, defined on some probability space $(\Omega, \mathcal A, \mu)$ (we can take $\Omega=\mathbb R^\mathbb N$ with its cylindrical $\sigma$-algebra and $\mu$ the law of $\otimes_{k=1}^\infty N(0,1)$, see Example 2.1.11 in \cite{GN16}). Observing (\ref{modelab}) then means that we observe a realisation of the Gaussian process $(Y(h) = \langle Y, h \rangle_{\mathbb H} : h \in \mathbb H)$ with marginal distributions $Y(h) \sim N(\langle G(f), h \rangle_{\mathbb H}, \|h\|_{\mathbb H}^2)$. We sometimes write $\langle \mathbb W, h \rangle_{\mathbb H}$ for the random variable $\mathbb W(h)$.

If $(e_k: k \in \mathbb Z)$ form an orthonormal basis of $\mathbb H$, and if $w=(w_k: k \in \mathbb Z) \in \ell_2, w_k \ge 0, w_k \downarrow 0$ as $|k| \to \infty$, then we can define the new Hilbert space 
\begin{equation}
\mathbb H_w = \left\{f = \sum_{k} e_k f_k, \sum_k f_k^2 w_k^2 = \|f\|^2_{\mathbb H_w}<\infty \right\},
\end{equation}
as the completion of $\mathbb H$ with respect to the $\|\cdot\|_{\mathbb H_w}$-norm, where $f_k = \langle f, e_k \rangle_{\mathbb H}$. By definition $$E\|\mathbb W\|_{\mathbb H_w}^2 = \sum_k E\mathbb W(e_k)^2 w_k^2<\infty$$ so using Ulam's theorem (Theorem 7.1.4 in \cite{D02}) and separability of $\mathbb H_w$, the cylindrically defined law of $\mathbb W$ extends to a tight Gaussian probability measure on the Borel-$\sigma$-field $\mathcal B_{\mathbb H_w}$ of $\mathbb H_w$. The equation $Y = G(f) + \eps \mathbb W$ then makes rigorous sense in $\mathbb H_w$, with $P_{G(f)}^Y$ denoting the shifted Gaussian law of the random variable $Y: (\Omega, \mathcal A) \to (\mathbb H_w, \mathcal B_{\mathbb H_w})$. If we let the law $P_0^Y$ of $\eps \mathbb W$ serve as a common dominating measure then for $ G(f)$ contained in the RKHS $\mathbb H$ of $P_0^Y$, the Cameron-Martin theorem (e.g., as in Proposition 6.1.5 in \cite{GN16}) allows us to define the log-likelihood function
\begin{equation} \label{likeli}
\ell(f)=\log p_f(Y) \equiv \log \frac{dP_{G(f)}^Y}{dP_0^Y}(Y) = \frac{1}{\varepsilon^2} \langle Y, G(f) \rangle_{\mathbb H} - \frac{1}{2\varepsilon^2} \|G(f)\|_{\mathbb H}^2.
\end{equation}
The mapping $(\omega,f) \mapsto p_f(Y(\omega))$ is jointly measurable from $(\Omega \times \mathcal F, \mathcal A \otimes \mathcal B_\mathcal F)$ to $(\mathbb R, \mathcal B_\mathbb R)$. [Indeed, using the $\mathcal B_{\mathcal F}$-$\mathcal B_{\mathbb H}$-measurability of $G$ it suffices to show joint measurability of the map $L:(\omega, h) \mapsto \langle \mathbb W(\omega), h \rangle_\mathbb H $ defined on $(\Omega \times  \mathbb H, \mathcal A \otimes \mathcal B_\mathbb H$). By what precedes $(\langle \mathbb W(\omega), h \rangle_\mathbb H: h \in \mathbb H)$ is a centred Gaussian process so joint measurability follows from separability of $\mathbb H$ and Proposition 2.1.12 (see also Definition 2.1.2) in \cite{GN16}.]

If now $\Pi$ is a prior probability distribution on $(\mathcal F, \mathcal B_\mathcal F)$, then we can apply Bayes' theorem in the product space $$(\Omega \times \mathcal F, \mathcal A \otimes \mathcal B_{\mathcal F}, Q),~~dQ(y,f) = p_f(Y(\omega)) \mu(\omega) \Pi(f)$$ to deduce (as on p.7 in \cite{GvdV17}) that the posterior distribution of $f|Y= f|Y(\omega)$ equals 
\begin{equation}\label{postab}
\Pi(B|Y) = \frac{\int_B p_f(Y) d\Pi(f)}{\int_\mathcal F p_f(Y) d\Pi(f)} = \frac{\int_B p_f(Y(\omega)) d\Pi(f)}{\int_\mathcal F p_f(Y(\omega)) d\Pi(f)} ,~~~ B \in \mathcal B_\mathcal F.
\end{equation} 

A general contraction theorem can now be proved as in the standard direct setting \cite{GGV00, GvdV17, GN16}, noting that the induced `information distance' is $\|G(f)-G(f_0)\|_{\mathbb H}$ in the measurement model (\ref{modelab}).

\begin{theorem}\label{ggv}
Let $\Pi=\Pi_\varepsilon$ be a sequence of prior distributions on $(\mathcal F, \mathcal B_\mathcal F)$, and let $d(\cdot, \cdot)$ be a (measurable) distance function on $\mathcal F$. Let $\Pi(\cdot|Y)$ be the posterior distribution from (\ref{postab}) and suppose $Y=G(f_0)+\varepsilon \mathbb W$ has law $P^Y_{f_0} \equiv P_{G(f_0)}^Y$ for some fixed $f_0 \in \mathcal F$. For a sequence of numbers $\bar \eta_\varepsilon \to 0$ such that $\bar \eta_\varepsilon /\varepsilon \to \infty$ and $C',L$ fixed constants, suppose $\Pi$ satisfies 
\begin{equation} \label{smallball}
\Pi(f \in \mathcal F: \|G(f)-G(f_0)\|_{\mathbb H} < \bar \eta_\varepsilon) \ge e^{-C'(\bar \eta_\varepsilon/\varepsilon)^2}
\end{equation}
and that $$\Pi(\mathcal F \setminus \mathcal F_\varepsilon) \le L e^{-(C'+4)(\bar \eta_\varepsilon/\varepsilon)^2}$$ for a sequence of measurable sets $\mathcal F_\varepsilon \subset \mathcal F$ and numbers $\eta^*_\varepsilon$ for which we can find tests (indicator functions) $\Psi(Y)$ such that $$E^Y_{f_0}\Psi(Y) + \sup_{f \in \mathcal F_\varepsilon, d(f,f_0) \ge \eta^*_\varepsilon} E^Y_f (1-\Psi(Y)) \le Le^{-(C'+4)(\bar \eta_\varepsilon/\varepsilon)^2}.$$ Then we have for some $c>0$ and as $\varepsilon \to 0$
\begin{equation} \label{strate}
\Pi(d(f,f_0) \ge \eta^*_\varepsilon|Y) =O_{P_{f_0}^Y} \big(e^{-c(\bar \eta_\varepsilon/\varepsilon)^2 }\big) =o_{P_{f_0}^Y}(1).
\end{equation}
\end{theorem}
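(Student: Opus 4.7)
\medskip

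\textit{Proof sketch for Theorem \ref{ggv}.} The plan is to adapt the classical Ghosal--Ghosh--van der Vaart strategy \cite{GGV00, GvdV17} to the white noise observation scheme (\ref{modelab}). Write the posterior as a ratio $\Pi(A_\varepsilon|Y) = N_\varepsilon/D_\varepsilon$, where $A_\varepsilon = \{f \in \mathcal F: d(f,f_0) \ge \eta^*_\varepsilon\}$ and, dividing numerator and denominator of (\ref{postab}) by $p_{f_0}(Y)$,
\[
N_\varepsilon = \int_{A_\varepsilon} \frac{p_f(Y)}{p_{f_0}(Y)} d\Pi(f),  \qquad D_\varepsilon = \int_{\mathcal F} \frac{p_f(Y)}{p_{f_0}(Y)} d\Pi(f).
\]
From (\ref{likeli}) and $Y = G(f_0) + \varepsilon \mathbb W$ we obtain the explicit expression
\[
\log \frac{p_f}{p_{f_0}}(Y) = -\frac{1}{2\varepsilon^2}\|G(f)-G(f_0)\|_{\mathbb H}^2 + \frac{1}{\varepsilon}\langle \mathbb W, G(f)-G(f_0)\rangle_{\mathbb H},
\]
which is the key identity driving both the numerator and denominator estimates.

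First, I would establish a lower bound on the ``evidence'' $D_\varepsilon$. Set $B_\varepsilon = \{f \in \mathcal F: \|G(f)-G(f_0)\|_{\mathbb H} < \bar\eta_\varepsilon\}$. Restricting the integral to $B_\varepsilon$ and applying Jensen's inequality to the probability measure $d\Pi|_{B_\varepsilon}/\Pi(B_\varepsilon)$ gives
\[
D_\varepsilon \ge \Pi(B_\varepsilon) \exp\!\Big(\tfrac{1}{\Pi(B_\varepsilon)}\int_{B_\varepsilon}\log(p_f/p_{f_0})(Y)\,d\Pi(f)\Big).
\]
Using Fubini, the expectation of the exponent under $P_{f_0}^Y$ equals $-\tfrac{1}{2\varepsilon^2}\int_{B_\varepsilon}\|G(f)-G(f_0)\|_{\mathbb H}^2 d\Pi(f) \ge -\tfrac{\bar\eta_\varepsilon^2}{2\varepsilon^2}\Pi(B_\varepsilon)$, while the variance is controlled by a double Fubini computation and Cauchy--Schwarz to be at most $\bar\eta_\varepsilon^2/\varepsilon^2 \cdot \Pi(B_\varepsilon)^2$. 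Chebyshev's inequality then yields, with $P_{f_0}^Y$-probability $\to 1$, the bound
\[
D_\varepsilon \ge \Pi(B_\varepsilon)\, e^{-2(\bar\eta_\varepsilon/\varepsilon)^2} \ge e^{-(C'+2)(\bar\eta_\varepsilon/\varepsilon)^2}
\]
by the small ball hypothesis (\ref{smallball}).

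Next, I would bound the numerator $N_\varepsilon$ in expectation. Splitting $A_\varepsilon = (A_\varepsilon \cap \mathcal F_\varepsilon) \cup (\mathcal F \setminus \mathcal F_\varepsilon)$ and inserting the test $\Psi$,
\[
N_\varepsilon \le \Psi(Y) + \int_{A_\varepsilon \cap \mathcal F_\varepsilon} \frac{p_f}{p_{f_0}}(Y)(1-\Psi(Y))\,d\Pi(f) + \int_{\mathcal F \setminus \mathcal F_\varepsilon} \frac{p_f}{p_{f_0}}(Y)\,d\Pi(f).
\]
Taking $P_{f_0}^Y$-expectations, Fubini converts the middle term into $\int E^Y_f(1-\Psi)\,d\Pi(f)$ and, since $E^Y_{f_0}[p_f/p_{f_0}] = 1$, the third term into $\Pi(\mathcal F \setminus \mathcal F_\varepsilon)$. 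The testing hypothesis and the sieve mass bound together give
\[
E_{f_0}^Y[N_\varepsilon] \le 3L\, e^{-(C'+4)(\bar\eta_\varepsilon/\varepsilon)^2},
\]
so by Markov's inequality, $N_\varepsilon \le e^{-(C'+3)(\bar\eta_\varepsilon/\varepsilon)^2}$ with $P_{f_0}^Y$-probability $\to 1$.

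Combining the two bounds on the same intersection event gives $\Pi(A_\varepsilon|Y) \le e^{-c(\bar\eta_\varepsilon/\varepsilon)^2}$ for $c = 1$ (say) and yields (\ref{strate}). The only step requiring any genuine care is the denominator lower bound: the deviation argument for $\int_{B_\varepsilon}\log(p_f/p_{f_0})(Y)d\Pi(f)$ relies on $\bar\eta_\varepsilon/\varepsilon \to \infty$ so that the Chebyshev threshold absorbs into the exponent. Everything else is bookkeeping using the LAN-structure provided by (\ref{likeli}).
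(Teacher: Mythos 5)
Your proposal is correct and is essentially the paper's own argument: the paper simply defers to Theorem 7.3.5 and Lemma 7.3.4 of \cite{GN16} (with $\|\cdot\|_{L^2}\mapsto\|\cdot\|_{\mathbb H}$, $1/\sqrt n\mapsto\varepsilon$, $\varepsilon_n\mapsto\bar\eta_\varepsilon$), which is exactly the numerator/denominator decomposition you write out, with the Jensen--Chebyshev evidence lower bound on the small ball and the test/sieve bound on the numerator. The only cosmetic point is that your displayed bound on $N_\varepsilon$ should really be read as a bound on $\Psi(Y)+(1-\Psi(Y))N_\varepsilon$ (it is applied on the high-probability event $\{\Psi(Y)=0\}$), but this is the standard bookkeeping and does not affect the conclusion.
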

\begin{proof}
The result is proved just as Theorem 7.3.5 in \cite{GN16}, adapting the proof of Lemma 7.3.4 to the present situation by replacing the $L^2$-norm there by our $\mathbb H$-norm, $1/\sqrt n$ there by our $\varepsilon$, and $\varepsilon_n$ there by our $\bar \eta_\varepsilon$, respectively. Inspection of the proof shows that also the stronger conclusion (\ref{strate}) is satisfied (cf.~also Exercise 8.7 on p.230 in \cite{GvdV17}).
\end{proof}

\subsection{Information lower bounds in function space} \label{crlbs}

We now recall some standard facts from efficient estimation in infinite-dimensional parameter spaces, see Chapter 25 in \cite{vdV98} for an introduction to the general theory. Assume that for all $h$ in some linear subspace $H$ of an inner product space with Hilbert-norm $\|\cdot\|_{LAN}$, the log-likelihood-ratio process of a statistical model of laws $\{\mathbb P_{f+h}^{\varepsilon}: h \in H\}$ on some sequence of measurable spaces $(\mathcal X_\varepsilon)$ has locally asymptotically normal (LAN) expansion
$$\log \frac{d \mathbb P^\varepsilon_{f +\varepsilon h}}{d \mathbb P^\varepsilon_f} = D_\varepsilon(h) - \frac{1}{2} \|h\|_{LAN}^2,~~ h \in  H,$$
where, as $\varepsilon \to 0$, $D_\varepsilon(h)$ converges in distribution under $\mathbb P_f^\varepsilon$ to $D(h) \sim N(0, \|h\|_{LAN}^2)$ for every fixed $h \in H$.  Next, let $(C, \|\cdot\|_C)$ be a Banach space and consider a continuous linear map $$\kappa: (H, \|\cdot\|_{LAN}) \to (C, \|\cdot\|_C).$$ Theorem 3.11.5 in \cite{vdVW96} implies that the information lower bound for estimating the parameter $\kappa (f)$ is given by the Gaussian random variable $\mathcal G$ on $C$ with marginal distributions
\begin{equation} \label{crlb}
T(\mathcal G) \sim N(0, \|\tilde \kappa_T\|_{LAN}^2),~~T \in C^*,
\end{equation}
where $\tilde \kappa_T$ is the Riesz-representer of the continuous linear map $T\circ \kappa: (H, \|\cdot\|_{LAN}) \to \mathbb R$. Note that $\tilde \kappa_T$ necessarily lies in the completion $\bar H$ of $H$ for the $\|\cdot\|_{LAN}$-norm. In particular $$\liminf_{\varepsilon \to 0}  \inf_{\hat \kappa: \mathcal X_\varepsilon \to C}\sup_f \varepsilon^{-2} E_f \|\hat \kappa -\kappa(f)\|_{C}^2 \ge E \|\mathcal G\|_C^2$$ where the supremum extends over a $\varepsilon$-neighborhood of $f$ in $H$. The above result holds whenever $\mathcal G$ is a tight Borel random variable on $C$. If $C=\mathbb R$ then the lower bound is simply given by $\|\tilde \kappa\|_{LAN}^2$ where $\tilde \kappa$ is the Riesz-representer of the map $\kappa: (H, \|\cdot\|_{LAN}) \to \mathbb R$.

\subsection{Some facts about weak convergence of random probability measures} \label{weakprob}

The following result is given in Lemma 2 of (the supplement of) \cite{CR15}.

\begin{proposition}
Let $P_n, P, n \in \mathbb N,$ be random probability measures on $\mathbb R$. Suppose for any real $t$, the Laplace transform $\int_\mathbb R e^{tx}dP(x)$ is finite almost surely and that $\int_\mathbb R e^{tx}dP_n(x) \to \int_\mathbb R e^{tx}dP(x)$ in probability. Then for any metric $\beta$ for weak convergence of probability measures, $\beta(P_n,P) \to 0$ in probability.
\end{proposition}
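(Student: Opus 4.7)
The plan is to reduce convergence in probability to almost sure convergence along subsequences, and then invoke the classical continuity theorem for Laplace transforms on a full probability event.

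First, I recall the standard fact that a sequence $X_n$ of real-valued random variables converges to $X$ in probability if and only if every subsequence $X_{n'}$ admits a further subsequence $X_{n''}$ converging to $X$ almost surely. It therefore suffices to show that from every subsequence of $(P_n)$ one can extract a further subsequence along which $\beta(P_n,P) \to 0$ almost surely.

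Fix an arbitrary subsequence $(n')$. Enumerate $\mathbb Q = \{t_1,t_2,\dots\}$. By hypothesis, for each $t_k$ the random variable $L_n(t_k):=\int_{\mathbb R} e^{t_k x} dP_n(x)$ converges in probability to $L(t_k):=\int_{\mathbb R} e^{t_k x} dP(x)$, the latter being finite almost surely. By a standard diagonal argument I extract a sub-subsequence $(n'')$ along which $L_{n''}(t_k) \to L(t_k)$ almost surely simultaneously for every $k$. On the intersection of the corresponding full-probability events with the event $\{L(t_k)<\infty \text{ for all }k\}$, which still has probability one, I obtain deterministic sequences of probability measures $(P_{n''}(\omega))$ and a limit measure $P(\omega)$ whose Laplace transforms satisfy $L_{n''}(t_k,\omega) \to L(t_k,\omega) < \infty$ for every rational $t_k$.

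On this full-probability event I now apply the classical continuity theorem for Laplace (moment generating) transforms: if $M_n,M$ are the Laplace transforms of Borel probability measures on $\mathbb R$ and $M_n(t)\to M(t)<\infty$ for every $t$ in an open neighbourhood of $0$, then $P_n\to P$ weakly; see, e.g., Curtiss's theorem or Theorem 4.3 in Kallenberg's \emph{Foundations of Modern Probability}. Here one only has convergence on the countable dense set $\mathbb Q$, but since each $L_{n''}(\cdot,\omega)$ is convex (hence continuous) on any interval where it is finite, and since $L(\cdot,\omega)$ is finite everywhere on $\mathbb R$ by hypothesis (so in particular on some interval $(-a,a)$, where the $L_{n''}(\cdot,\omega)$ are eventually uniformly bounded on compact sub-intervals by a standard convexity/extraction argument), pointwise convergence on the dense set $\mathbb Q\cap(-a,a)$ upgrades to pointwise convergence on $(-a,a)$. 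The continuity theorem then yields $P_{n''}(\omega) \to P(\omega)$ weakly. Since $\beta$ metrises weak convergence (Theorem~3.28 in Dudley, as already invoked in the paper), this gives $\beta(P_{n''}(\omega),P(\omega)) \to 0$. As $(n')$ was arbitrary, the subsequence criterion yields $\beta(P_n,P)\to 0$ in probability.

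The main technical obstacle I expect is the uniform-bound step needed to promote convergence on the dense set $\mathbb Q$ to convergence on an open interval: one must rule out the possibility that, pathwise, the Laplace transforms $L_{n''}(\cdot,\omega)$ blow up at irrationals. This is handled by observing that a pointwise limit of a convergent sequence of convex functions on a dense subset of an interval, whose limit is finite at two points straddling the interval, is automatically finite and the convergence is locally uniform on the interior; applying this on, say, $[-1,1]$ using convergence at $\pm 1$ gives uniform convergence on compact sub-intervals of $(-1,1)$ and in particular pointwise convergence throughout. Everything else is routine once the subsequence/diagonal extraction is in place.
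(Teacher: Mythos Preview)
The paper does not supply its own proof of this proposition; it simply records that the result is Lemma~2 in the supplement to Castillo--Rousseau (2015) and moves on. So there is no argument in the paper to compare against at the level of detail.

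Your proposal is correct and follows the natural route: reduce to almost sure convergence via the subsequence criterion, extract a diagonal subsequence on which the Laplace transforms converge at every rational, and then invoke the classical continuity theorem for moment generating functions (Curtiss) pathwise. The one step that deserves a word of care is the promotion from convergence on $\mathbb Q$ to convergence on an open interval around $0$. Your convexity argument is the right idea and works: since each $L_n(\cdot,\omega)$ is convex and nonnegative with $L_n(0,\omega)=1$, convergence at two rationals $q<t<r$ gives $\limsup_n L_n(t)\le \tfrac{r-t}{r-q}L(q)+\tfrac{t-q}{r-q}L(r)$, and a matching $\liminf$ bound follows from the slope monotonicity of convex functions using a third rational $s>r$; letting $q,r\to t$ and using continuity of the (everywhere finite, hence continuous) convex limit $L$ yields $L_n(t)\to L(t)$. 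This is standard, and with it the rest is routine. Your argument is almost certainly the same one underlying the cited lemma.
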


As a consequence we deduce also that by the `Cramer-Wold device', if $P_n,P$ are random probability measures on $\mathbb R^K$ and $\int e^{\langle t, x\rangle}dP_n(x) \to \int e^{\langle t, x \rangle}dP(x)$ in probability for all $t \in \mathbb R^K$, then $\beta(P_n,P) \to 0$ in probability. Here $\langle \cdot, \cdot \rangle$ is the standard Euclidean inner product.

\medskip

Since convergence in probability implies convergence almost surely along a subsequence, if $\beta_S(P_n,P) \to 0$ in probability on any separable metric space $S$, then any `limiting' consequence of weak convergence such as the continuous mapping theorem, convergence of moments or uniform convergence along classes of Borel sets or functions, also holds in probability, simply by arguing by contradiction and extracting subsequential almost sure limits. See the appendix in \cite{CN13}, \cite{CR15} for more details.

\bigskip

\textbf{Acknowledgements.} This research was supported by the European Research Council under ERC grant agreement UQMSI (No. 647812). I would like to thank Gabriel P. Paternain for many helpful discussions and Sven Wang for helpful remarks about Section \ref{sop}. I am also grateful to two anonymous referees for valuable remarks and suggestions.

\bibliographystyle{plain}

\bibliography{bib}

\begin{thebibliography}{10}

\bibitem{A18}
Kweku Abraham.
\newblock Nonparametric {B}ayesian posterior contraction rates for scalar
  diffusions with high-frequency data.
\newblock {\em arXiv:1802.05635}, 2018.

\bibitem{AF03}
Robert~A. Adams and John J.~F. Fournier.
\newblock {\em Sobolev spaces}.
\newblock Elsevier/Academic Press, Amsterdam, 2003.

\bibitem{S13}
Sergios Agapiou, Stig Larsson, and Andrew~M. Stuart.
\newblock Posterior contraction rates for the {B}ayesian approach to linear
  ill-posed inverse problems.
\newblock {\em Stochastic Process. Appl.}, 123(10):3828--3860, 2013.

\bibitem{BU10}
Guillaume Bal and Gunther Uhlmann.
\newblock Inverse diffusion theory of photoacoustics.
\newblock {\em Inverse Problems}, 26(8):085010, 20, 2010.

\bibitem{BL05}
Gang Bao and Peijun Li.
\newblock Inverse medium scattering problems for electromagnetic waves.
\newblock {\em SIAM J. Appl. Math.}, 65(6):2049--2066, 2005.

\bibitem{BL96}
Lawrence~D. Brown and Mark~G. Low.
\newblock Asymptotic equivalence of nonparametric regression and white noise.
\newblock {\em Ann. Statist.}, 24(6):2384--2398, 1996.

\bibitem{C14}
Isma\"el Castillo.
\newblock On {B}ayesian supremum norm contraction rates.
\newblock {\em Ann. Statist.}, 42(5):2058--2091, 2014.

\bibitem{C17}
Isma\"el Castillo.
\newblock Pólya tree posterior distributions on densities.
\newblock {\em Annales de l'Institut Henri Poincar\'e}, 53:2074--2102, 2017.

\bibitem{CN13}
Isma\"el Castillo and Richard Nickl.
\newblock Nonparametric {B}ernstein--von {M}ises {T}heorems in {G}aussian white
  noise.
\newblock {\em Ann. Statist.}, 41(4):1999--2028, 2013.

\bibitem{CN14}
Isma\"el Castillo and Richard Nickl.
\newblock On the {B}ernstein--von {M}ises phenomenon for nonparametric {B}ayes
  procedures.
\newblock {\em Ann. Statist.}, 42(5):1941--1969, 2014.

\bibitem{CR15}
Isma\"el Castillo and Judith Rousseau.
\newblock A {B}ernstein--von {M}ises theorem for smooth functionals in
  semiparametric models.
\newblock {\em Ann. Statist.}, 43(6):2353--2383, 2015.

\bibitem{CZ95}
Kai~Lai Chung and Zhong~Xin Zhao.
\newblock {\em From {B}rownian motion to {S}chr\"odinger's equation}.
\newblock Springer-Verlag, Berlin, 1995.

\bibitem{CDRS09}
Simon~L. Cotter, Masoumeh Dashti, James~C. Robinson, and Andrew~M. Stuart.
\newblock Bayesian inverse problems for functions and applications to fluid
  mechanics.
\newblock {\em Inverse Problems}, 25(11):115008, 43, 2009.

\bibitem{CRSW13}
Simon~L. Cotter, Gareth~O. Roberts, Andrew~M. Stuart, and David White.
\newblock M{CMC} methods for functions: modifying old algorithms to make them
  faster.
\newblock {\em Statist. Sci.}, 28(3):424--446, 2013.

\bibitem{DHS12}
Masoumeh Dashti, Stephen Harris, and Andrew~M. Stuart.
\newblock Besov priors for {B}ayesian inverse problems.
\newblock {\em Inverse Probl. Imaging}, 6(2):183--200, 2012.

\bibitem{DLSV13}
Masoumeh Dashti, Kody J.~H. Law, Andrew~M. Stuart, and Jochen Voss.
\newblock M{AP} estimators and their consistency in {B}ayesian nonparametric
  inverse problems.
\newblock {\em Inverse Problems}, 29(9):095017, 27, 2013.

\bibitem{DS16}
Masoumeh Dashti and Andrew~M. Stuart.
\newblock The {B}ayesian approach to inverse problems.
\newblock {\em In: Handbook of Uncertainty Quantification, Editors R. Ghanem,
  D. Higdon and H. Owhadi, Springer}, 2016.

\bibitem{D92}
Ingrid Daubechies.
\newblock {\em Ten lectures on wavelets}, volume~61 of {\em CBMS-NSF Regional
  Conference Series in Applied Mathematics}.
\newblock 1992.

\bibitem{D02}
Richard~M. Dudley.
\newblock {\em Real analysis and probability}.
\newblock Cambridge University Press, Cambridge, 2002.

\bibitem{D14}
Richard~M. Dudley.
\newblock {\em Uniform central limit theorems}.
\newblock Cambridge University Press, New York, second edition, 2014.

\bibitem{ET96}
D.~E. Edmunds and H.~Triebel.
\newblock {\em Function spaces, entropy numbers, differential operators}.
\newblock Cambridge University Press, Cambridge, 1996.

\bibitem{F99}
David Freedman.
\newblock On the {B}ernstein-von {M}ises theorem with infinite-dimensional
  parameters.
\newblock {\em Ann. Statist.}, 27(4):1119--1140, 1999.

\bibitem{GGV00}
Subhashis Ghosal, Jayanta~K. Ghosh, and Aad~W. van~der Vaart.
\newblock Convergence rates of posterior distributions.
\newblock {\em Ann. Statist.}, 28(2):500--531, 2000.

\bibitem{GvdV17}
Subhashis Ghosal and Aad~W. van~der Vaart.
\newblock {\em Fundamentals of Nonparametric Bayesian Inference}.
\newblock Cambridge University Press, New York, 2017.

\bibitem{GT98}
David Gilbarg and Neil~S. Trudinger.
\newblock {\em Elliptic partial differential equations of second order}.
\newblock Springer-Verlag, Berlin-New York, 1998.

\bibitem{GN11}
Evarist Gin{\'e} and Richard Nickl.
\newblock Rates of contraction for posterior distributions in {$L^r$}-metrics,
  {$1\leq r\leq\infty$}.
\newblock {\em Ann. Statist.}, 39(6):2883--2911, 2011.

\bibitem{GN16}
Evarist Gin\'e and Richard Nickl.
\newblock {\em Mathematical foundations of infinite-dimensional statistical
  models}.
\newblock Cambridge University Press, New York, 2016.

\bibitem{HB15}
Tapio Helin and Martin Burger.
\newblock Maximum a posteriori probability estimates in infinite-dimensional
  {B}ayesian inverse problems.
\newblock {\em Inverse Problems}, 31(8):085009, 22, 2015.

\bibitem{HW15}
Thorsten Hohage and Frederic Weidling.
\newblock Verification of a variational source condition for acoustic inverse
  medium scattering problems.
\newblock {\em Inverse Problems}, 31(7):075006, 14, 2015.

\bibitem{IK81}
Ildar~A. Ibragimov and Rafail~Z. Khasminski.
\newblock {\em Statistical estimation. Asymptotic theory.}
\newblock Springer-Verlag, New York-Berlin, 1981.
\newblock Translated from the Russian by S. Kotz.

\bibitem{KLS16}
Hanne Kekkonen, Matti Lassas, and Samuli Siltanen.
\newblock Posterior consistency and convergence rates for {B}ayesian inversion
  with hypoelliptic operators.
\newblock {\em Inverse Problems}, 32(8):085005, 31, 2016.

\bibitem{KS18}
Bartek Knapik and Jean-Bernard Salomond.
\newblock A general approach to posterior contraction in nonparametric inverse
  problems.
\newblock {\em Bernoulli}, 24(3):2091--2121, 2018.

\bibitem{K11}
Bartek Knapik, Aad~W. van~der Vaart, and J.~Harry van Zanten.
\newblock Bayesian inverse problems with {G}aussian priors.
\newblock {\em Ann. Statist.}, 39(5):2626--2657, 2011.

\bibitem{L1812}
Pierre-Simon M.~de Laplace.
\newblock {\em Theorie analytiques des probabilit\'es}.
\newblock Courcier, Paris, 1812.

\bibitem{LSS09}
Matti Lassas, Eero Saksman, and Samuli Siltanen.
\newblock Discretization-invariant {B}ayesian inversion and {B}esov space
  priors.
\newblock {\em Inverse Probl. Imaging}, 3(1):87--122, 2009.

\bibitem{LC86}
Lucien Le~Cam.
\newblock {\em Asymptotic methods in statistical decision theory}.
\newblock Springer-Verlag, New York, 1986.

\bibitem{LM72}
Jean-Louis Lions and Enrico Magenes.
\newblock {\em Non-homogeneous boundary value problems and applications. {V}ol.
  {I}}.
\newblock Springer-Verlag, New York-Heidelberg, 1972.

\bibitem{L17}
Yulong Lu.
\newblock On the {B}ernstein-von {M}ises theorem for high dimensional nonlinear
  {B}ayesian inverse problems.
\newblock {\em arXiv:1706.00289}, 2017.

\bibitem{M92}
Yves Meyer.
\newblock {\em Wavelets and operators}.
\newblock Cambridge University Press, Cambridge, 1992.

\bibitem{MNP17}
Francois Monard, Richard Nickl, and Gabriel~P. Paternain.
\newblock Efficient {B}ayesian nonparametric inference for {X}-ray transforms.
\newblock {\em Annals of Statistics, to appear; arXiv:1708.06332}, 2018.

\bibitem{N07}
Richard Nickl.
\newblock Donsker-type theorems for nonparametric maximum likelihood
  estimators.
\newblock {\em Probab. Theory Related Fields}, 138(3-4):411--449, 2007.

\bibitem{NS17a}
Richard Nickl and Jakob S\"ohl.
\newblock Bernstein-von {M}ises theorems for statistical inverse problems {II}:
  compound {P}oisson processes.
\newblock {\em arXiv:1709.07752}, 2017.

\bibitem{NS17}
Richard Nickl and Jakob S\"ohl.
\newblock Nonparametric {B}ayesian posterior contraction rates for discretely
  observed scalar diffusions.
\newblock {\em Ann. Statist.}, 45(4):1664--1693, 2017.

\bibitem{P12}
Omiros Papaspiliopoulos, Yvo Pokern, Gareth~O. Roberts, and Andrew~M. Stuart.
\newblock Nonparametric estimation of diffusions: a differential equations
  approach.
\newblock {\em Biometrika}, 99(3):511--531, 2012.

\bibitem{R13}
Kolyan Ray.
\newblock Bayesian inverse problems with non-conjugate priors.
\newblock {\em Electron. J. Stat.}, 7:2516--2549, 2013.

\bibitem{R17}
Kolyan Ray.
\newblock Adaptive {B}ernstein--von {M}ises theorems in {G}aussian white noise.
\newblock {\em Ann. Statist.}, 45(6):2511--2536, 2017.

\bibitem{R08}
Markus Rei\ss.
\newblock Asymptotic equivalence for nonparametric regression with multivariate
  and random design.
\newblock {\em Ann. Statist.}, 36(4):1957--1982, 2008.

\bibitem{RR12}
Vincent Rivoirard and Judith Rousseau.
\newblock Bernstein-von {M}ises theorem for linear functionals of the density.
\newblock {\em Ann. Statist.}, 40(3):1489--1523, 2012.

\bibitem{SS12}
Christoph Schwab and Andrew~M. Stuart.
\newblock Sparse deterministic approximation of {B}ayesian inverse problems.
\newblock {\em Inverse Problems}, 28(4):045003, 32, 2012.

\bibitem{S10}
Andrew~M. Stuart.
\newblock Inverse problems: a {B}ayesian perspective.
\newblock {\em Acta Numer.}, 19:451--559, 2010.

\bibitem{SVV15}
Botond Szab\'o, Aad~W. van~der Vaart, and Harry van Zanten.
\newblock Frequentist coverage of adaptive nonparametric {B}ayesian credible
  sets.
\newblock {\em Ann. Statist., with discussion}, 43(4):1391--1428, 2015.

\bibitem{T83}
Hans Triebel.
\newblock {\em Theory of function spaces}, volume~78 of {\em Monographs in
  Mathematics}.
\newblock Birkh\"auser Verlag, Basel, 1983.

\bibitem{T08}
Hans Triebel.
\newblock {\em Function spaces and wavelets on domains}, volume~7 of {\em EMS
  Tracts in Mathematics}.
\newblock European Mathematical Society (EMS), Z\"urich, 2008.

\bibitem{vdG93}
Sara~A. van~de Geer.
\newblock Hellinger-consistency of certain nonparametric maximum likelihood
  estimators.
\newblock {\em Ann. Statist.}, 21(1):14--44, 1993.

\bibitem{vdG00}
Sara~A. van~de Geer.
\newblock {\em Applications of empirical process theory}.
\newblock Cambridge University Press, Cambridge, 2000.

\bibitem{MvZ13}
Frank van~der Meulen and Harry van Zanten.
\newblock Consistent nonparametric {B}ayesian inference for discretely observed
  scalar diffusions.
\newblock {\em Bernoulli}, 19(1):44--63, 2013.

\bibitem{vdV98}
Aad~W. van~der Vaart.
\newblock {\em Asymptotic statistics}.
\newblock Cambridge University Press, Cambridge, 1998.

\bibitem{vdVW96}
Aad~W. van~der Vaart and Jon~A. Wellner.
\newblock {\em Weak convergence and empirical processes}.
\newblock Springer Series in Statistics. Springer-Verlag, New York, 1996.

\bibitem{V13}
Sebastian~J. Vollmer.
\newblock Posterior consistency for {B}ayesian inverse problems through
  stability and regression results.
\newblock {\em Inverse Problems}, 29(12):125011, 32, 2013.

\end{thebibliography}

\bigskip

\bigskip

\textsc{Statistical Laboratory}

\textsc{Department of Pure Mathematics and Mathematical Statistics}

\textsc{University of Cambridge, CB3 0WB, Cambridge, UK}

Email: r.nickl@statslab.cam.ac.uk

\end{document}